\newtheorem{theorem}{Theorem}[subsection]
\newtheorem{lemma}[theorem]{Lemma}
\newtheorem{proposition}[theorem]{Proposition}
\newtheorem{corollary}[theorem]{Corollary}
\theoremstyle{definition}
\newtheorem{definition}[theorem]{Definition}
\newtheorem{example}[theorem]{Example}
\theoremstyle{remark}
\newcommand{\NN}{\mathbb{N}}
\newcommand{\restr}[2]{{#1}_{|#2}}
\newcommand{\supp}[1]{{\textnormal{supp}(#1)}}
\newcommand{\wt}[1]{{\textnormal{wt}(#1)}}
\newcommand{\spann}[2]{{\langle#1,#2\rangle}}
\newcommand{\zero}{{T_0}}
\newcommand{\codline}{{T_{q+1}}}
\newcommand{\difference}{{T_{2q}}}
\newcommand{\twolines}{{T_{2q+1}}}
\newcommand{\odd}{{T^{\textnormal{odd}}}}
\newcommand{\nonconcurrent}{{T^{\boldsymbol{\triangle}}}}
\newcommand{\concurrent}{{T^{\bigstar}}}
\title{Small weight code words arising from the incidence of points and hyperplanes in PG($n,q$)}
\author{Sam Adriaensen, Lins Denaux, Leo Storme \& Zsuzsa Weiner\thanks{The fourth author acknowledges the support of OTKA Grant No. K 124950.}}
\date{}
\begin{document}
	
	\maketitle
	
	\begin{abstract}
		Let $C_{n-1}(n,q)$ be the code arising from the incidence of points and hyperplanes in the Desarguesian projective space PG($n,q$). Recently, Polverino and Zullo \cite{polverino} proved that within this code, all non-zero code words of weight at most $2q^{n-1}$ are scalar multiples of either the incidence vector of one hyperplane, or the difference of the incidence vectors of two distinct hyperplanes. We improve this result, proving that when $q>17$ and $q\notin\{25,27,29,31,32,49,121\}$, all code words of weight at most $(4q-\sqrt{8q}-\frac{33}{2})q^{n-2}$ are linear combinations of incidence vectors of hyperplanes through a fixed $(n-3)$-space. Depending on the omitted value for $q$, we can lower the bound on the weight of $c$ to obtain the same results.
	\end{abstract}
	
	\section{Preliminaries}
	
	Let $n\in\NN$ and $q=p^h$, with $p$ prime and $h\in\NN\setminus\{0\}$. Let PG($n,q$) be the $n$-dimensional Desarguesian projective space over the finite field of order $q$. In line with other articles, we define
	\[
		\theta_n=\frac{q^{n+1}-1}{q-1}\textnormal{,}
	\]
	with the extension that $\theta_m=0$ if $m\in\mathbb{Z}\setminus\NN$. Denote the set of all points of PG($n,q$) by $\mathcal{P}(n,q)$ and the set of all hyperplanes by $\mathcal{H}(n,q)$. Let $\mathcal V(n,q)$ be the $p$-ary vector space of functions from $\mathcal{P}(n,q)$ to $\mathbb{F}_p$; thus $\mathcal V(n,q)=\mathbb{F}_p^{\mathcal{P}(n,q)}$. Denote by $\boldsymbol{1}$ the function that maps all points to $1$.
	
	\begin{definition}
	    Let $v\in \mathcal V(n,q)$. Define the \emph{support} of $v$ as $\supp{v}=\{P\in\mathcal{P}(n,q):v(P)\neq0\}$ and the \emph{weight} of $v$ as $\wt{v}=|\supp{v}|$. We will call all points of $\mathcal{P}(n,q)\setminus\supp{v}$ the \emph{holes} of $v$.
	\end{definition}
	
	We can identify each hyperplane $H\in\mathcal{H}(n,q)$ with the function $H\in \mathcal V(n,q)$ such that
	\[
	    H(P)=\begin{cases}1\quad\textnormal{if }P\in H\textnormal{,}\\
	    0\quad\textnormal{otherwise.}\end{cases}
	\]
	If a hyperplane $H$ is identified as a function, its representation as a vector will be called the \emph{incidence vector} of the hyperplane $H$. It should be clear from the context whether we mean an actual hyperplane or such a function/vector.
	
	\begin{definition}
		The $p$-ary linear code $C_{n-1}(n,q)$ is the subspace of $\mathcal V(n,q)$ generated by $\mathcal{H}(n,q)$, where we interpret the elements of the latter as functions in $\mathcal V(n,q)$. The elements of $C_{n-1}(n,q)$ are called $\emph{code words}$.
	\end{definition}
	
	Define the \emph{scalar product} of two functions $v,w\in \mathcal V(n,q)$ as
	\[
	    v\cdot w=\sum_{P\in\mathcal{P}(n,q)}v(P)w(P)\textnormal{.}
	\]
	
	\begin{definition}
		We define the \emph{dual code} of ${C_{n-1}(n,q)}$ as its orthogonal complement with respect to the above scalar product:
		\[
		    {C_{n-1}(n,q)}^\bot=\big\{v\in \mathcal V(n,q):\big(\forall c\in C_{n-1}(n,q)\big)\big(c\cdot v=0\big)\big\}\textnormal{.}
		\]
	\end{definition}
	
	\begin{definition}
		Let $v\in \mathcal V(n,q)$ and take a $k$-space $\kappa$ in PG($n,q$). If we let $\kappa$ play the role of PG($k,q$), we can naturally define the \emph{restriction} of $v$ to the space $\kappa$ as the function $\restr{v}{\kappa}\in \mathcal V(k,q)$ restricted to the point set $\mathcal{P}(k,q)\subseteq \mathcal{P}(n,q)$.
	\end{definition}

	\begin{definition}
		Let $s$ be a line in PG($n,q$) and $v\in \mathcal V(n,q)$. If $s$ intersects $\supp{v}$ in $\alpha$ points ($0\leqslant\alpha\leqslant q+1$), we will call $s$ an $\alpha$\emph{-secant} to $\supp{v}$. Furthermore,
		\begin{itemize}
			\item if $\alpha\leqslant3$, $s$ will be called a \emph{short secant},
			\item if $\alpha\geqslant q-1$, $s$ will be called a \emph{long secant}.
		\end{itemize}
	\end{definition}
	
	\section{Known results}
	
	\subsection{Results in general dimension}
	
	The minimum weight of the code $C_{n-1}(n,q)$ equals $\theta_{n-1}$. The code words corresponding to this weight are characterised.
	
	\begin{theorem}[\cite{assmuskey,macwilliams}]\label{MinimumWeight}
		The code words of $C_{n-1}(n,q)$ having minimum weight are the scalar multiples of the incidence vectors of hyperplanes.
	\end{theorem}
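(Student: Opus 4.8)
The easy direction is immediate: the incidence vector of any hyperplane belongs to $C_{n-1}(n,q)$ by definition and has weight $\theta_{n-1}$, so every scalar multiple of it is a code word of weight $\theta_{n-1}$. Thus the plan is to prove the converse: a non-zero $c\in C_{n-1}(n,q)$ with $\wt{c}\leqslant\theta_{n-1}$ is a scalar multiple of the incidence vector of a hyperplane.

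The first step I would take is to establish a \emph{line-sum identity}. Writing $c=\sum_{H\in\mathcal{H}(n,q)}\lambda_H H$, put $\Lambda=\sum_H\lambda_H\in\mathbb{F}_p$; then for every line $\ell$ of PG($n,q$) one has $\sum_{P\in\ell}c(P)=\Lambda$. Indeed $\sum_{P\in\ell}c(P)=\sum_H\lambda_H|\ell\cap H|$, and a line meets a hyperplane of PG($n,q$) in $1$ or $q+1$ points, both $\equiv1\pmod p$. In particular $\Lambda$ does not depend on the chosen representation, and this uniform-line-sum property is the main structural tool.

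Next I would split according to whether $\Lambda=0$. If $\Lambda=0$, then $c=0$: otherwise, fixing $P\in\supp{c}$, each of the $\theta_{n-1}$ lines through $P$ must contain a point of $\supp{c}$ besides $P$ (else its sum would be $c(P)\neq0$), and distinct lines through $P$ meet only in $P$, so $\wt{c}\geqslant\theta_{n-1}+1$, a contradiction. Hence we may assume $\Lambda\neq0$. Then $\supp{c}$ meets every line; counting the $\theta_{n-1}$ lines through a point outside $\supp{c}$ gives $\wt{c}\geqslant\theta_{n-1}$, so $\wt{c}=\theta_{n-1}$. Now I would invoke the Bose--Burton characterisation of minimum-size line-blocking sets in PG($n,q$) to conclude that $\supp{c}$ is a hyperplane $H$. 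Finally, for each $P\in H$ choose a line through $P$ not contained in $H$; it meets $\supp{c}=H$ only in $P$, so the line-sum identity gives $c(P)=\Lambda$. Therefore $c=\Lambda\cdot H$, as desired. (For $n=1$ the same argument works and is in any case trivial, since $\wt{c}\leqslant\theta_0=1$.)

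The one non-elementary ingredient, and the step I expect to be the real obstacle, is upgrading ``$\supp{c}$ meets every line and has exactly $\theta_{n-1}$ points'' to ``$\supp{c}$ is a hyperplane''. I would either cite the Bose--Burton theorem directly, or reprove it by induction on $n$ from the planar base case: a set of $q+1$ points of PG($2,q$) meeting every line is a line --- if it were not, a tangent line at one of its points would (using the $q$ points of that line outside the set) force every two of the remaining points to be collinear with that point, hence all of them to lie on one line through it. All the remaining steps are a few lines of counting combined with the line-sum identity.
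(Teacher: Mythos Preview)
The paper does not give its own proof of this theorem: it is quoted as a classical result from \cite{assmuskey,macwilliams}, with the remark that Bagchi and Inamdar \cite{bagchiminweight} gave a geometrical proof via blocking sets. Your proposal is correct and is precisely that blocking-set argument: the line-sum identity is the paper's Lemma~\ref{SumOfCoefficients}, the case $\Lambda=0$ is disposed of by a counting through a support point, and in the case $\Lambda\neq0$ one recognises $\supp{c}$ as a line-blocking set of minimum size and invokes Bose--Burton. So your route coincides with the one the paper attributes to \cite{bagchiminweight}.

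One small caveat: your sketched planar base case for Bose--Burton is phrased a bit obscurely and, as written, is hard to parse into a valid argument. A cleaner version is this. Let $B\subseteq\textnormal{PG}(2,q)$ block every line with $|B|=q+1$. For any $Q\notin B$ the $q+1$ lines through $Q$ each meet $B$, and together they cover all of $B$, so each contains \emph{exactly one} point of $B$. Now pick distinct $P_1,P_2\in B$; if the line $P_1P_2$ contained a point $Q\notin B$, that line through $Q$ would carry two points of $B$, a contradiction; hence $P_1P_2\subseteq B$, and by cardinality $B$ equals this line. With that adjustment your proof is complete.
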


	Bagchi and Inamdar \cite[Theorem 1]{bagchiminweight} gave a geometrical proof of this theorem, using \emph{blocking sets}. Recently, Polverino and Zullo \cite{polverino} characterised all code words up to the second smallest (non-zero) weight:
	
	\begin{theorem}[{\cite[4]{polverino}}]\label{ResultsPolverinoZullo}
		Let $q = p^h$ with $p$ prime.
		\begin{enumerate}
			\item There are no code words of $C_{n-1}(n,q)$ with weight in the interval $]\theta_{n-1},2q^{n-1}[$.
			\item The code words of weight $2q^{n-1}$ in $C_{n-1}(n,q)$ are the scalar multiples of the difference of the incidence vectors of two distinct hyperplanes of PG($n,q$).
		\end{enumerate}
	\end{theorem}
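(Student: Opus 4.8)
The plan is to prove, by induction on $n$, the combined statement: \emph{if $c\in C_{n-1}(n,q)$ and $0<\wt{c}\leqslant 2q^{n-1}$, then $c$ is a scalar multiple either of the incidence vector of a hyperplane (whence $\wt{c}=\theta_{n-1}$) or of the difference of the incidence vectors of two distinct hyperplanes (whence $\wt{c}=2q^{n-1}$).} Both assertions of the theorem follow at once. The base case $n=2$ is the analogous classification for the point--line code $C_1(2,q)$ of $\mathrm{PG}(2,q)$, classical for prime $q$ and known in general; I would quote it rather than reprove it. So let $n\geqslant 3$, assume the statement in dimension $n-1$, take $c$ with $0<\wt{c}\leqslant 2q^{n-1}$, and (by Theorem~\ref{MinimumWeight}) assume $\wt{c}>\theta_{n-1}$; it then suffices to show $c$ is a scalar multiple of a difference of two hyperplanes. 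I would first record two elementary observations: for every line $s$ the residue $\sum_{P\in s}c(P)=:\sigma$ is independent of $s$ (it equals $1$ on every line for each generating hyperplane), and consequently any code word supported inside a single hyperplane $H$ is a scalar multiple of its incidence vector $v_H$ (evaluating the line-sum along lines meeting $H$ in one point shows it is constant on $H$).

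For every hyperplane $H$ the restriction $\restr{c}{H}$ is a code word of $C_{n-2}(n-1,q)$, so by induction it is zero, a scalar multiple of an $(n-2)$-space (weight $\theta_{n-2}$), a scalar multiple of a difference of two $(n-2)$-spaces (weight $2q^{n-2}$), or has weight $>2q^{n-2}$. Double counting the incident pairs $(P,H)$ with $P\in\supp{c}\cap H$ gives $\sum_H\wt{\restr{c}{H}}=\wt{c}\,\theta_{n-1}$, and since $\wt{c}\,\theta_{n-1}/\theta_n<\wt{c}/q\leqslant 2q^{n-2}$ the average section weight is below $2q^{n-2}$, so some hyperplane $H_0$ has $\wt{\restr{c}{H_0}}<2q^{n-2}$; hence $\restr{c}{H_0}$ is either $0$ or a nonzero scalar multiple of the incidence vector of an $(n-2)$-space $\pi_0$. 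Take $\Pi:=\pi_0$ in the second case and $\Pi:=$ an arbitrary $(n-2)$-space inside $H_0$ in the first, and set $\mathcal{G}:=\{\,G\neq H_0\text{ through }\Pi:\wt{\restr{c}{G}}\geqslant 2q^{n-2}\,\}$. Using that $\restr{c}{G}$ either vanishes on the $(n-2)$-space $\Pi$ of $G$ (first case) or has support containing the $(n-2)$-space $\pi_0$ of $G$ (second case), together with the induction hypothesis excluding weights strictly between $\theta_{n-2}$ and $2q^{n-2}$, I would check that $|\mathcal{G}|\leqslant q$, that the $q+1$ hyperplanes through $\Pi$ partition $\mathcal{P}(n,q)\setminus\Pi$ so that $\supp{c}\subseteq\Pi\cup\bigcup_{G\in\mathcal{G}}(G\setminus\Pi)$, and that $\supp{c}\cap\Pi$ equals $\emptyset$ in the first case and $\Pi$ in the second.

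Now I would whittle down $|\mathcal{G}|$. If $|\mathcal{G}|\leqslant 1$ then $\supp{c}$ lies in a single hyperplane, so $\wt{c}\in\{\,0,\theta_{n-2},\theta_{n-1}\,\}$, contradicting $\wt{c}>\theta_{n-1}$. If $|\mathcal{G}|=2$, say $\mathcal{G}=\{G_1,G_2\}$ with $G_1\cap G_2=\Pi$, a line-sum argument pins $c$ down: $\sigma=c(z)$ for every $z\in\Pi$ (use a line through $z$ meeting $G_1\cup G_2$ only at $z$), so $c$ is constant $\sigma$ on $\Pi$; a line missing $\Pi$ meets each of the $q+1$ hyperplanes through $\Pi$ exactly once, hence meets $\supp{c}$ in at most the two points $x=s\cap G_1$, $y=s\cap G_2$, with $c(x)+c(y)=\sigma$; fixing $x\in G_1\setminus\Pi$, the lines through $x$ missing $\Pi$ biject with $G_2\setminus\Pi$ via $s\mapsto s\cap G_2$ (because $\langle x,y,\Pi\rangle=\langle x,G_2\rangle$ is the whole space when $y\in G_2\setminus\Pi$), so $c$ is constant on $G_2\setminus\Pi$, and symmetrically on $G_1\setminus\Pi$; thus $c=\alpha v_{G_1}+\beta v_{G_2}$ with $\alpha+\beta=\sigma$ and (since $\wt{\restr{c}{G_i}}\geqslant 2q^{n-2}$) $\alpha,\beta\neq 0$. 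In the first case $\supp{c}\cap\Pi=\emptyset$ forces $\sigma=0$, so $\beta=-\alpha$ and $c=\alpha(v_{G_1}-v_{G_2})$ with $\wt{c}=2q^{n-1}$ --- exactly the desired conclusion. In the second case $\supp{c}\cap\Pi=\Pi$ forces $\sigma\neq 0$, whence $\supp{c}=G_1\cup G_2$ has $2q^{n-1}+\theta_{n-2}>2q^{n-1}$ points, a contradiction. So in every case either we are done, or $|\mathcal{G}|\geqslant 3$.

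The remaining task --- showing $|\mathcal{G}|\geqslant 3$ is impossible when $\wt{c}\leqslant 2q^{n-1}$ --- is the main obstacle, since the naive weight estimate only yields $|\mathcal{G}|=O(q)$ here. I would intersect with hyperplanes $M$ transverse to $\Pi$: then $\restr{c}{M}$ is a code word of $C_{n-2}(n-1,q)$ whose support is spread over at least three of the $(n-2)$-spaces of $M$ through $M\cap\Pi$ while vanishing on at least one of them, which by the induction hypothesis forces $\wt{\restr{c}{M}}>2q^{n-2}$; a careful count of how many transverse $M$ genuinely meet at least three of the sets $\supp{\restr{c}{G}}$, $G\in\mathcal{G}$ --- using that each such set is the support of a code word of weight $\geqslant 2q^{n-2}$ and so cannot be confined to very few $(n-2)$-spaces --- should then contradict $\sum_M\wt{\restr{c}{M}}=\wt{c}\,\theta_{n-1}\leqslant 2q^{n-1}\theta_{n-1}$. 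This balancing argument, together with importing the planar base case for non-prime $q$, is where the real difficulty lies; the reduction to a pencil and the line-sum identification of $c$ when $|\mathcal{G}|\leqslant 2$ are comparatively routine.
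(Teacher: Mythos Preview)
The paper does not prove this theorem at all: it is quoted from Polverino and Zullo \cite{polverino} and used as a black box. So there is no ``paper's own proof'' to compare against; what follows is an assessment of your attempt on its own merits.

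Your reduction to a pencil through an $(n-2)$-space $\Pi$ and the line-sum identification when $|\mathcal G|\leqslant 2$ are fine. The fatal problem is the case $|\mathcal G|\geqslant 3$, and it is worse than you acknowledge. Your stated plan is to show this case is \emph{impossible}, but it is not: take $c=\alpha(v_{H_1}-v_{H_2})$. Every hyperplane $H_0$ with $\wt{\restr{c}{H_0}}<2q^{n-2}$ satisfies $\restr{c}{H_0}=0$ (the weight is $0$ or $2q^{n-2}$ on every hyperplane $\neq H_1,H_2$), so you are forced into your ``first case'' and you pick $\Pi\subset H_0$ \emph{arbitrarily}. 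If you happen to choose $\Pi\neq H_1\cap H_2$, then for every $G\neq H_0$ through $\Pi$ one has $G\cap H_1\neq G\cap H_2$ and $\restr{c}{G}=\alpha(v_{G\cap H_1}-v_{G\cap H_2})$ of weight exactly $2q^{n-2}$, so $|\mathcal G|=q$. Thus the case $|\mathcal G|\geqslant 3$ genuinely occurs for the very code words you are trying to characterise, and no contradiction can be derived. At minimum you must make a smarter choice of $\Pi$ (e.g.\ argue that one can pass to the ``second case'', or locate $H_1\cap H_2$ directly), which is additional work you have not done.

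Two smaller issues. First, even in your sketched contradiction for $|\mathcal G|\geqslant 3$, the inference ``support spread over at least three of the $(n-2)$-spaces $M\cap G$ through $M\cap\Pi$, hence $\wt{\restr{c}{M}}>2q^{n-2}$'' does not follow from the induction hypothesis: a code word $\alpha(v_A-v_B)$ in $M$ of weight $2q^{n-2}$ need not have $A\cap B=M\cap\Pi$, so its support can perfectly well meet many of the $M\cap G$. Second, your base case $n=2$ is itself a nontrivial theorem for non-prime $q$; quoting it as ``known in general'' without a precise source risks circularity, since the planar statement is part of what \cite{polverino} establishes.
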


	So far, Theorem \ref{ResultsPolverinoZullo} summarises the best results known concerning the characterisation of small weight code words in $C_{n-1}(n,q)$ in case $n\geqslant3$.
	
	\bigskip
	As a final note, we keep the following lemmata in mind.
	
	\begin{lemma}[{\cite[Chapter $6$]{assmuskey}, \cite[Lemma $2$]{polverino}}]\label{SumOfCoefficients}
	    Let $c\in C_{n-1}(n,q)$, $c=\sum_i\alpha_iH_i$ for some $\alpha_i\in\mathbb{F}_p\setminus\{0\}$ and $H_i\in\mathcal{H}(n,q)$, and let $\kappa$ be a $k$-space of PG($n,q$), $1\leqslant\kappa\leqslant n$. Then $\kappa\cdot c=\sum_i\alpha_i$.
	\end{lemma}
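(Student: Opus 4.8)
The statement is essentially a bilinearity computation combined with the observation that in PG($n,q$) with $q=p^h$, every projective subspace of dimension at least $1$ contains a number of points congruent to $1$ modulo $p$. I would proceed as follows.

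First I would use the bilinearity of the scalar product together with the hypothesis $c=\sum_i\alpha_iH_i$ to write
\[
\kappa\cdot c=\sum_{P\in\kappa}c(P)=\sum_i\alpha_i\sum_{P\in\kappa}H_i(P)=\sum_i\alpha_i\,|\kappa\cap H_i|\textnormal{,}
\]
so the whole statement reduces to understanding $|\kappa\cap H_i|\bmod p$ for an arbitrary $k$-space $\kappa$ and hyperplane $H_i$.

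Next I would recall the standard fact that a $k$-space $\kappa$ and a hyperplane $H$ in PG($n,q$) either satisfy $\kappa\subseteq H$, in which case $|\kappa\cap H|=\theta_k$, or meet in a $(k-1)$-space, in which case $|\kappa\cap H|=\theta_{k-1}$. Since $k\geqslant1$, both $\theta_k$ and $\theta_{k-1}$ are of the form $1+q+q^2+\cdots$, and because $q\equiv0\pmod p$ we get $\theta_k\equiv\theta_{k-1}\equiv1\pmod p$. (This is precisely where the hypothesis $k\geqslant1$ is needed: for $k=0$ a point meets a hyperplane in $0$ or $1$ points, and the congruence fails.) Hence $|\kappa\cap H_i|\equiv1\pmod p$ for every index $i$, regardless of the relative position of $\kappa$ and $H_i$.

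Substituting back, $\kappa\cdot c\equiv\sum_i\alpha_i\cdot1=\sum_i\alpha_i\pmod p$, and since both sides live in $\mathbb{F}_p$ this is an equality in $\mathbb{F}_p$, as claimed. There is no real obstacle here; the only point requiring care is the case distinction on whether $\kappa\subseteq H_i$, which is harmless since both cases yield the same residue, and the explicit exclusion of $k=0$.
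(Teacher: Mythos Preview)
Your argument is correct and is precisely the standard proof of this fact. The paper itself does not prove this lemma but merely cites it from \cite{assmuskey} and \cite{polverino}; the bilinearity reduction together with $|\kappa\cap H_i|\in\{\theta_{k-1},\theta_k\}\equiv1\pmod p$ for $k\geqslant1$ is exactly the argument found in those references.
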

	
	\begin{lemma}[{\cite[Remark 3.1]{polverino}}]
	\label{InductiveLemma}
		Let $c\in C_{n-1}(n,q)$ be a code word and $\kappa$ a $k$-space of PG($n,q$), $1\leqslant k\leqslant n$. Then $\restr{c}{\kappa}$ is a code word of $C_{k-1}(k,q)$.
	\end{lemma}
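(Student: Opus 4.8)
The plan is to reduce the claim to the defining property of the code. Since $c\in C_{n-1}(n,q)$, I would write $c=\sum_i\alpha_iH_i$ with $\alpha_i\in\mathbb{F}_p$ and $H_i\in\mathcal{H}(n,q)$, and use that restriction to $\kappa$ is an $\mathbb{F}_p$-linear map $\mathcal V(n,q)\to\mathcal V(k,q)$, so that $\restr{c}{\kappa}=\sum_i\alpha_i\restr{H_i}{\kappa}$. It then suffices to control each restricted summand $\restr{H_i}{\kappa}$.

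The key geometric input is the dimension formula for subspaces of PG($n,q$): for any hyperplane $H$ and any $k$-space $\kappa$ one has $\dim(H\cap\kappa)\geqslant\dim H+\dim\kappa-n=k-1$, and clearly $\dim(H\cap\kappa)\leqslant k$ with equality precisely when $\kappa\subseteq H$. Hence $H\cap\kappa$ is either all of $\kappa$ or a hyperplane of $\kappa$. Viewing $\kappa$ as a copy of PG($k,q$), this says that $\restr{H}{\kappa}$ is either the all-ones vector $\boldsymbol{1}$ on $\mathcal{P}(k,q)$ (when $\kappa\subseteq H$), or the incidence vector of a hyperplane of PG($k,q$), which lies in $C_{k-1}(k,q)$ by definition.

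Consequently $\restr{c}{\kappa}$ is an $\mathbb{F}_p$-linear combination of $\boldsymbol{1}$ and of incidence vectors of hyperplanes of PG($k,q$), so the whole statement comes down to showing $\boldsymbol{1}\in C_{k-1}(k,q)$. For that I would fix a $(k-2)$-space $\mu$ in PG($k,q$) and add up the incidence vectors of the $q+1$ hyperplanes of PG($k,q$) that contain $\mu$: a point not on $\mu$ lies on exactly one of them, whereas a point of $\mu$ lies on all $q+1$, and $q+1\equiv1\pmod p$ since $p\mid q$. Hence this sum equals $\boldsymbol{1}$, proving $\boldsymbol{1}\in C_{k-1}(k,q)$ and therefore $\restr{c}{\kappa}\in C_{k-1}(k,q)$.

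I do not expect a genuine obstacle here: the only subtlety is the appearance of hyperplanes $H_i$ through $\kappa$, and the identity $\boldsymbol{1}\in C_{k-1}(k,q)$ is exactly what absorbs those terms. If one prefers, the same reasoning can be packaged as an induction on $n-k$, at each step restricting to a hyperplane (the case $k=n-1$) and iterating; the case distinction on whether $\kappa$ lies in $H_i$ is identical.
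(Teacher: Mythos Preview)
Your argument is correct and is the standard one: restriction is $\mathbb{F}_p$-linear, every hyperplane of PG($n,q$) meets $\kappa$ in a hyperplane of $\kappa$ or in all of $\kappa$, and the identity $\sum_{H\supseteq\mu}H=\boldsymbol{1}$ (over the $q+1$ hyperplanes through a fixed $(k-2)$-space $\mu$, using $q+1\equiv1\pmod p$) shows $\boldsymbol{1}\in C_{k-1}(k,q)$, absorbing the terms with $\kappa\subseteq H_i$.

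Note that the paper does not supply its own proof of this lemma; it is quoted as \cite[Remark~3.1]{polverino}, so there is nothing to compare against beyond confirming that your reasoning matches the standard justification underlying that remark.
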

	
	\subsection{Results in the plane}
	
	Historically, most of the work done on this topic focuses on the planar case, i.e. the code of points and lines $C_1(2,q)$. Some early results on small weight code words in this particular code were those of Chouinard. In his PhD Thesis \cite{chouinardprime}, he proved that, when $q=p$ prime, code words up to weight $2p$ are linear combinations of at most two lines. When $q=9$, he proved that code words having a weight in the interval $]q+1,2q[$ do not exist \cite{chouinard9}.\\
	Fack et al.\ \cite{fack} improved the prime case. More specifically, these authors proved that, if $q=p\geqslant11$, all code words of weight up to $2p+\frac{p-1}{2}$ are linear combinations of at most two lines. They cleverly made use of the existence of a Moorhouse base \cite{moorhouse}.\\
	The prime case kept on inspiring more mathematicians. Next in line is Bagchi \cite{bagchiweight} on the one hand, and Sz\H onyi and Weiner \cite{szonyi} on the other hand (see Theorem \ref{ResultsSzonyiWeinerPrime}). Bagchi proved the following:

	\begin{theorem}[{\cite[Theorem $1.1$]{bagchiweight}}]\label{ResultsBagchi}
		Let $p\geqslant5$. Then, the fourth smallest weight of $C_1(2,p)$ is $3p-3$. The only words of $C_1(2,p)$ of Hamming weight smaller than $3p-3$ are the linear combinations of at most two lines in the plane.
	\end{theorem}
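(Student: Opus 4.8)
\emph{Proof sketch.} The assertion splits into an existence statement (there is a code word of weight exactly $3p-3$) and a characterisation statement (nothing of smaller weight besides combinations of at most two lines). For the existence part the plan is to write down an explicit word. Fix a point $P$ and three lines through it, with coordinates chosen so that the lines are $\ell_1\colon X_1=0$, $\ell_2\colon X_2=0$, $\ell_3\colon X_1=X_2$ and $P=(0:0:1)$, and define $c_0$ by $c_0(0:1:t)=t$, $c_0(1:0:t)=t$, $c_0(1:1:t)=-t$ for every $t\in\mathbb{F}_p$, with $c_0=0$ at all remaining points (so $c_0(P)=0$). A short computation shows $\ell\cdot c_0=0$ for every line $\ell$: a line through $P$ other than the $\ell_i$ meets $\supp{c_0}$ only in $P$; each $\ell_i$ contributes $\sum_{t\in\mathbb{F}_p}t=0$, using that $p$ is odd; and a line off $P$ meets $\ell_1,\ell_2,\ell_3$ in the points $(0:1:a)$, $(1:0:b)$, $(1:1:a+b)$ for suitable $a,b$, where the values of $c_0$ sum to $a+b-(a+b)=0$. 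Hence $c_0\in C_1(2,p)^{\perp}$; and since for $p$ prime $C_1(2,p)^{\perp}\subseteq C_1(2,p)$ — for instance because $C_1(2,p)^{\perp}$ is spanned by differences of line-incidence-vectors, which lie in $C_1(2,p)$ — $c_0$ is a code word, manifestly of weight $3(p-1)=3p-3$.

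For the characterisation I would argue by contradiction. Let $c\in C_1(2,p)$ have $\wt{c}<3p-3$, not be a combination of at most two lines, and have minimal weight with these properties; put $B=\supp{c}$ and $w=\wt{c}$. By Theorem~\ref{MinimumWeight} and Theorem~\ref{ResultsPolverinoZullo}, every code word of weight at most $2p$ is a scalar multiple of a line or a difference of two lines, so $2p+1\le w\le 3p-4$ (the inequality $3p-4\ge 2p+1$ is where $p\ge5$ enters). Two ingredients drive everything: by Lemma~\ref{SumOfCoefficients} there is a scalar $s$ with $\ell\cdot c=s$ for all lines $\ell$; and double counting incidences through a fixed $Q\in B$ gives $\sum_{\ell\ni Q}\big(|\ell\cap B|-1\big)=w-1<3p-3$, so on average a line through $Q$ meets $B$ in fewer than three points. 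Now split into cases according to whether $B$ has an ``almost full'' secant. \emph{Case 1: on some line $\ell$ the word $c$ takes a fixed nonzero value $a$ on more than half the points.} Subtracting $a\ell$ strictly lowers the weight, so by minimality $c-a\ell$ is a combination of at most two lines, whence $c$ is a combination of at most three lines — in fact of three distinct lines with nonzero coefficients, else $c$ itself would be a combination of at most two lines. Such a word has weight at least $3p-2$: at least $3p$ in the concurrent case, and exactly $3p-2$ in the triangle case, since for $p$ odd the three coefficients cannot make all three vertices vanish. This contradicts $w\le 3p-4$.

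\emph{Case 2: no line carries such a dominant nonzero value.} Here I would attach to $c$ a R\'edei-type polynomial and use a Sz\H{o}nyi--Weiner-type stability estimate to show that the smallness of $w$, combined with the absence of an almost full secant, forces $B$ to lie in the union of at most three lines. Once this is known, the constant-line-sum property forces $c$ to be (almost) linear along each of these lines: if $B$ lies in two lines through a point $P'$, one reads off $c=\alpha\ell_1+\beta\ell_2$ with $\alpha+\beta\neq0$, a two-line word — contradicting the choice of $c$ (and incidentally showing that the weight-$(2p+1)$ code words are precisely these); and if $B$ genuinely involves three lines, each contributes at least $p-1$ support points, so $w\ge 3(p-1)=3p-3$, again contradicting $w\le 3p-4$. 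In either case we are done.

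The step I expect to be the real difficulty is Case 2 — the range $2p+1\le w\le 3p-4$ with the secant lengths spread out and no dominant nonzero value anywhere. The crude counts $\sum_\ell|\ell\cap B|=w(p+1)$ and $\sum_\ell\binom{|\ell\cap B|}{2}=\binom{w}{2}$ are not by themselves strong enough; one must follow the actual $\mathbb{F}_p$-values of $c$ along secants, not merely the support $B$, and make the polynomial/stability argument sharp enough to bite already at $w=3p-4$ while simultaneously yielding the extremal word of weight $3p-3$ and the weight-$(2p+1)$ words. Note moreover that, unlike in higher dimension, there is no induction to fall back on here, since the restriction of $c$ to a line lies in the trivial code $C_0(1,p)$; the plane is the genuine base case. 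By contrast the two- and three-line bookkeeping and the long-secant reduction of Case 1 are routine.
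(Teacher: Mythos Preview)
The paper does not contain a proof of this theorem: it is quoted from \cite{bagchiweight} in the ``Known results'' section and used thereafter as a black box (most notably in Corollary~\ref{PlaneResults}). There is therefore no proof in the present paper to compare your attempt against.

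On the merits of your sketch itself: the existence part is correct and coincides with Example~\ref{OddCodeword}. Your Case~1 reduction (subtract a dominant line, use minimality, check that a genuine three-line combination has weight at least $3p-2$) is sound. Case~2, however, is not a proof but a declaration of intent: you announce a ``R\'edei-type polynomial'' and a ``Sz\H{o}nyi--Weiner-type stability estimate'' without specifying either, and you explicitly flag this as the step where the real work lies. That is a genuine gap. Note moreover that the Sz\H{o}nyi--Weiner result quoted here as Theorem~\ref{ResultsSzonyiWeinerPrime} requires $p>17$, so even a wholesale appeal to it would not cover $5\leqslant p\leqslant 17$; Bagchi's argument in \cite{bagchiweight} is of a different nature and works uniformly for $p\geqslant 5$. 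Even granting your unproved claim that $\supp{c}$ lies in three lines, the subsequent assertions (``the constant-line-sum property forces $c$ to be almost linear along each line'', ``each contributes at least $p-1$ support points'') are not justified as stated. If you want a self-contained proof you must either carry out the polynomial/stability argument in full for all $p\geqslant 5$, or consult \cite{bagchiweight} for Bagchi's actual method.
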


	Bagchi knew this bound was sharp, as he discovered a code word of weight $3p-3$ which \emph{cannot} be constructed as a linear combination of at most two lines when $p>3$ \cite{bagchicodeword}. This code word was independently discovered by De Boeck and Vandendriessche \cite{deboeck} as well.
	
	\begin{example}[{\cite{bagchicodeword},\cite[Example $10.3.4$]{deboeck}}]\label{OddCodeword}
		Choose a coordinate system for PG($2,p$) and let $c$ be a vector of $\mathcal V(2,p)$, $p\neq2$ a prime, such that
		\[
			c(P)=\begin{cases}
				a\quad&\textnormal{if}\quad P=(0,1,a)\textnormal{,}\\
				b\quad&\textnormal{if}\quad P=(1,0,b)\textnormal{,}\\
				-c\quad&\textnormal{if}\quad P=(1,1,c)\textnormal{,}\\
				0\quad&\textnormal{otherwise.}
			\end{cases}
		\]
		Remark that $\supp{c}$ is covered by the three concurrent lines $m: X_0=0$, $m': X_1=0$ and $m'': X_0=X_1$.
	\end{example}

	The proof of $c$ being a code word of $C_1(2,p)$ relies on proving that $c$ belongs to ${C_1(2,p)}^\bot\subseteq C_1(2,p)$. As each of the three lines $m$, $m'$ and $m''$ contains $p-1$ points with pairwise different, non-zero values, it is easy to see that such a code word can never be written as a linear combination of less than $p-1$ different lines.\\
	As noted by Sz\H onyi and Weiner \cite{szonyi}, the above example can be generalised as follows:
	
	\begin{example}({\cite[Example $4.7$]{szonyi}})\label{OddCodewordGen}
		Let $c$ be the code word in Example \ref{OddCodeword}, with corresponding lines $m$, $m'$ and $m''$ considered as incidence vectors. Suppose $\pi$ is an arbitrary collineation of PG($2,p$) and let $\gamma\in\mathbb{F}_p\setminus\{0\}$ and $\lambda,\lambda',\lambda''\in\mathbb{F}_p$. Then
		\[
			d=(\gamma c+\lambda m+\lambda'm'+\lambda''m'')^\pi
		\]
		is a code word of weight $3p-3$ or $3p-2$, depending on the value of $\lambda+\lambda'+\lambda''$.
	\end{example}

	By construction, it is easy to see that this generalised example has some interesting properties.

	\begin{proposition}\label{PropOddCodeword}
		Suppose $d$ is the code word as constructed in Example \ref{OddCodewordGen}. Let $S=(m\cap m'\cap m'')^\pi$. Then
		\[
			\wt{d}=\begin{cases}
				3p-3\quad\textnormal{if}\quad d(S)=0\textnormal{,}\\
				3p-2\quad\textnormal{if}\quad d(S)\neq0\textnormal{.}
			\end{cases}
		\]
	\end{proposition}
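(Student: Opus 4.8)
The plan is to reduce to the case $\pi=\mathrm{id}$ and then count nonzero values line by line. Since every collineation of PG($2,p$) is in particular a bijection of the point set $\mathcal P(2,p)$, it preserves weights and sends corresponding points to corresponding values. Hence, setting $e=\gamma c+\lambda m+\lambda'm'+\lambda''m''$ and $S_0=m\cap m'\cap m''$, we have $\wt{d}=\wt{e}$ and, with $S=S_0^\pi$, also $d(S)=e(S_0)$. So it suffices to show $\wt{e}=3p-3$ when $e(S_0)=0$ and $\wt{e}=3p-2$ otherwise.

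In the coordinates of Example \ref{OddCodeword} one has $m:X_0=0$, $m':X_1=0$, $m'':X_0=X_1$, and these three lines are pairwise concurrent only in $S_0=(0,0,1)$. Consequently
\[
m\cup m'\cup m''=\{S_0\}\,\sqcup\,A\,\sqcup\,A'\,\sqcup\,A'',
\]
where $A=\{(0,1,a):a\in\mathbb F_p\}=m\setminus\{S_0\}$ and, analogously, $A'=\{(1,0,b):b\in\mathbb F_p\}=m'\setminus\{S_0\}$ and $A''=\{(1,1,c):c\in\mathbb F_p\}=m''\setminus\{S_0\}$; these three sets are pairwise disjoint and each has size $p$. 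Since $\supp{c}\subseteq m\cup m'\cup m''$ and the incidence vectors of $m,m',m''$ are supported there too, we get $\supp{e}\subseteq m\cup m'\cup m''$, so it is enough to evaluate $e$ on each of the four pieces. A point of $A$ lies on $m$ but on neither $m'$ nor $m''$, so for $P=(0,1,a)$ we obtain $e(P)=\gamma c(P)+\lambda=\gamma a+\lambda$; because $\gamma\neq0$, the map $a\mapsto\gamma a+\lambda$ is a bijection of $\mathbb F_p$, so $e$ vanishes at exactly one point of $A$ and is nonzero at the remaining $p-1$ points. The same computation on $A'$ and $A''$ gives $e(1,0,b)=\gamma b+\lambda'$ and $e(1,1,c)=-\gamma c+\lambda''$, each again contributing exactly $p-1$ points of nonzero value. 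Finally $e(S_0)=\gamma c(S_0)+\lambda+\lambda'+\lambda''=\lambda+\lambda'+\lambda''$, since $c(S_0)=0$, so $S_0$ contributes $1$ to the weight precisely when $e(S_0)\neq0$.

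Adding up the four contributions yields $\wt{e}=3(p-1)$ if $e(S_0)=0$ and $\wt{e}=3(p-1)+1$ if $e(S_0)\neq0$, which together with $\wt{d}=\wt{e}$ and $d(S)=e(S_0)$ is exactly the assertion. There is no genuine obstacle here; the only points needing a little care are the bookkeeping in the middle step — verifying that $A$, $A'$, $A''$ really are pairwise disjoint and exhaust $m\cup m'\cup m''$ apart from $S_0$ — and observing that the hypothesis $\gamma\in\mathbb F_p\setminus\{0\}$ from Example \ref{OddCodewordGen} is precisely what makes each affine-linear map on the parameter a bijection of $\mathbb F_p$.
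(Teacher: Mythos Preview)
Your proof is correct and follows exactly the line the paper intends: the proposition is stated without proof there, being deemed ``easy to see by construction,'' and the remark in Example \ref{OddCodewordGen} that the weight depends on the value of $\lambda+\lambda'+\lambda''$ is precisely your computation $e(S_0)=\lambda+\lambda'+\lambda''$. Your reduction via the collineation and the line-by-line count on $m$, $m'$, $m''$ make explicit what the paper leaves implicit.
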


	For somewhat larger values of $p$, Sz\H onyi and Weiner \cite{szonyi} improved Bagchi's result:

	\begin{theorem}[{\cite[Theorem $4.8$ and Corollary $4.10$]{szonyi}}]\label{ResultsSzonyiWeinerPrime}
		Let $c$ be a code word of $C_1(2,p)$, $p>17$ prime. If $\wt{c}\leqslant\max\{3p+1,4p-22\}$, then $c$ is either the linear combination of at most three lines or given by Example \ref{OddCodewordGen}.
	\end{theorem}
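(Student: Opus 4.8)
The plan is to combine the rigid linear constraints a code word imposes on lines with a stability analysis of its support, the latter via the polynomial method. By Lemma~\ref{SumOfCoefficients}, writing $c=\sum_i\alpha_i\ell_i$ with $\alpha_i\in\mathbb{F}_p\setminus\{0\}$ and $\ell_i$ lines, there is a single $t=\sum_i\alpha_i\in\mathbb{F}_p$ with $\ell\cdot c=t$ for \emph{every} line $\ell$ of PG($2,p$). Hence, if $t\neq0$ then $\supp{c}$ is a blocking set, while if $t=0$ then $c\in C_1(2,p)^\bot$ and every line meets $\supp{c}$ in a number of points different from $1$. By Theorem~\ref{ResultsBagchi} we may assume $3p-3\leqslant\wt{c}\leqslant\max\{3p+1,4p-22\}$, since below $3p-3$ the word is already a combination of at most two lines; moreover the largest weight attained by a genuine combination of three lines is $3p+1$ (three concurrent lines whose three coefficients do not sum to zero). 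So the statement is a \emph{gap theorem}: within this interval the only code words are combinations of at most three lines and the words of Example~\ref{OddCodewordGen}.

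The first substantial step is to reduce to the case in which every line meets $\supp{c}$ in at most $p-2$ points. A line $\ell$ meeting $\supp{c}$ in $p-1$, $p$, or $p+1$ points forces $\restr{c}{\ell}$ to be a function on the $p+1$ points of $\ell$ with at most two zeros and prescribed sum $t$; a short count then shows that either some field value occurs in $\restr{c}{\ell}$ with large enough multiplicity that subtracting that multiple of $\ell$ strictly lowers the weight (iterating, and invoking Theorem~\ref{ResultsBagchi}, recovers the conclusion), or else the nonzero values of $\restr{c}{\ell}$ are essentially distinct, which pins $t=0$ together with an affine pattern along $\ell$ that will be matched to the concurrent configuration below; a counting bound on the union of such lines shows there can be at most three of them, so only finitely much work is involved.

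The heart of the proof is then to show that, once every line meets $\supp{c}$ in at most $p-2$ points and $\wt{c}\leqslant\max\{3p+1,4p-22\}$, the support $\supp{c}$ lies on at most three lines. Here I would attach to the affine part of $\supp{c}$ its R\'edei polynomial relative to a well-chosen line at infinity and use the condition $\ell\cdot c=t$ to force an associated polynomial over $\mathbb{F}_p$ to be lacunary; the Blokhuis--Sz\H{o}nyi theory of lacunary polynomials then yields either a large collinear subset of $\supp{c}$, contradicting the hypothesis once the subset exceeds $p-2$, or a decomposition of $\supp{c}$ over at most three pencils, i.e.\ $\supp{c}\subseteq\ell_1\cup\ell_2\cup\ell_3$. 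I expect this to be the main obstacle: it is precisely here that $p>17$, the omitted small values of $q$, and the exact constant $4p-22$ (rather than merely $3p+1$) enter, because the lacunary-polynomial estimates and the associated secant counts are tight at this order.

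It remains to analyse the configuration of the lines $\ell_1,\ell_2,\ell_3$ carrying $\supp{c}$. If only two of them occur, then inspecting $\restr{c}{\ell}$ on each together with the constant line-sum determines $c$ as a combination of those two lines, in accordance with Theorem~\ref{ResultsPolverinoZullo}. If $\ell_1,\ell_2,\ell_3$ form a triangle, then evaluating $\ell\cdot c=t$ over the pencils of lines through non-vertex points of each $\ell_i$ gives a linear system whose solutions make $\restr{c}{\ell_i}$ constant off the vertices, so $c=\alpha_1\ell_1+\alpha_2\ell_2+\alpha_3\ell_3$. If $\ell_1,\ell_2,\ell_3$ are concurrent in a point $S$, the same pencil argument --- each line through a non-vertex point of $\ell_i$ now avoiding $S$ --- forces the values of $c$ on $\ell_i\setminus\{S\}$ to be an affine function of an affine coordinate, and matching the three affine functions via the line-sum condition reproduces, up to a collineation and the addition of $\lambda\ell_1+\lambda'\ell_2+\lambda''\ell_3$, exactly the word $c$ of Example~\ref{OddCodeword}; Proposition~\ref{PropOddCodeword} then certifies that $\wt{c}\in\{3p-3,3p-2\}$. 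In every case $c$ is a combination of at most three lines or a word of Example~\ref{OddCodewordGen}, completing the proof.
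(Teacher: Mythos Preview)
This theorem is not proved in the present paper at all: it is quoted verbatim from Sz\H{o}nyi and Weiner \cite[Theorem~4.8 and Corollary~4.10]{szonyi} in the ``Known results'' section, and the paper supplies no argument for it. There is therefore no proof here to compare your proposal against; the paper simply \emph{uses} this result (via Corollary~\ref{PlaneResults}) as a black box in the planar case to bootstrap the higher-dimensional analysis.

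As for the proposal itself, your outline is broadly in the spirit of the actual Sz\H{o}nyi--Weiner argument (stability of $k\bmod p$ multisets via the polynomial method), but what you have written is a sketch with substantial gaps rather than a proof. The reduction step --- ``a short count then shows that either some field value occurs in $\restr{c}{\ell}$ with large enough multiplicity that subtracting that multiple of $\ell$ strictly lowers the weight \dots\ or else the nonzero values of $\restr{c}{\ell}$ are essentially distinct'' --- is not justified: for a line meeting $\supp{c}$ in $p-1$ or $p$ points, the most frequent value need only occur about twice, and subtracting twice does not bring the weight below $3p-3$ in general, so this dichotomy does not follow from a pigeonhole count. More seriously, the central step (``the Blokhuis--Sz\H{o}nyi theory of lacunary polynomials then yields \dots\ a decomposition of $\supp{c}$ over at most three pencils'') is asserted, not carried out; you correctly flag that this is where $p>17$ and the constant $4p-22$ must enter, but you have not shown how. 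If you want to supply a self-contained proof here you would need to reproduce the relevant stability theorem from \cite{szonyi}, which is considerably more delicate than your sketch indicates.
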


	The same authors have proven the following results for $q$ not prime, proving for large values of $q$ that the code word described in Example \ref{OddCodewordGen} can only exist when $q$ is prime.

	\begin{theorem}[{\cite[Theorem $4.3$]{szonyi}}]\label{ResultsSzonyiWeinerNonPrime}
		Let $c$ be a code word of $C_1(2,q)$, with $27 < q$, $q = p^h$, $p$ prime. If
		\begin{itemize}
			\item $\wt{c}<(\lfloor\sqrt{q}\rfloor+1)(q+1-\lfloor\sqrt{q}\rfloor)$, when $2<h$, or
			\item $\wt{c}<\frac{(p-1)(p-4)(p^2+1)}{2p-1}$, when $h=2$,
		\end{itemize}
		then $c$ is a linear combination of exactly $\big\lceil\frac{\wt{c}}{q+1}\big\rceil$ different lines.
	\end{theorem}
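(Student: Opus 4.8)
The plan is to reduce the statement to a stability theorem for \emph{$\sigma\bmod p$ multisets} of points of PG($2,q$) --- multisets meeting every line in a number of points $\equiv\sigma\pmod p$ --- and to prove that theorem by the polynomial (R\'edei) method; this is the framework of Sz\H{o}nyi and Weiner. By Lemma~\ref{SumOfCoefficients} applied to lines, every line of PG($2,q$) carries $c$-values summing to the fixed scalar $\sigma:=\sum_i\alpha_i$. The canonical coordinatewise lift of $c$ to a point multiset $m$ with multiplicities in $\{0,1,\dots,p-1\}$ therefore meets every line in a number of points $\equiv\sigma\pmod p$, i.e.\ $m$ is a $\sigma\bmod p$ multiset, and its relevant size parameter is controlled by $\wt{c}$. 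It then suffices to prove: a $\sigma\bmod p$ multiset of PG($2,q$), $27<q=p^h$, whose parameter lies below the stated bound is a non-negative integer combination of incidence vectors of lines; reducing modulo $p$, this says $c$ is an $\mathbb{F}_p$-linear combination of lines.

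For the polynomial step I would fix a system of affine coordinates and form a weighted R\'edei polynomial $R(X,Y)=\prod(X+aY-b)^{m(a,b)}$, whose degree equals the affine size of $m$. For each slope $y=\mu$ the factorisation of $R(X,\mu)$ over $\mathbb{F}_q$ encodes how the lines of slope $\mu$ meet $m$ with multiplicity, while the $\sigma\bmod p$ condition forces a lacunary/divisibility property of $R$ modulo $X^q-X$. Applying the Sz\H{o}nyi--Weiner derivation lemma to $\gcd(R,X^q-X)$ and its partial derivatives then shows that almost every point of the support lies on a line that is ``full'' in $m$; concretely one extracts a \emph{long secant}, that is, a line meeting $\supp{c}$ in at least $q+1-\varepsilon$ points with $\varepsilon$ of order $\sqrt q$. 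When $\sigma\neq0$ the set $\supp{c}$ is a blocking set and such a long secant alternatively comes from the classification of small blocking sets of PG($2,q$): Blokhuis' theorem when $h=2$ and the Sz\H{o}nyi/Polverino stability results when $h\geqslant3$. The gap between these two bodies of results is exactly what produces the two different thresholds $(\lfloor\sqrt q\rfloor+1)(q+1-\lfloor\sqrt q\rfloor)$ and $\frac{(p-1)(p-4)(p^2+1)}{2p-1}$ in the statement.

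With a long secant $\ell$ available, the polynomial analysis additionally pins the values of $c$ along $\ell$ down to a single scalar $\lambda$ on all but $O(\varepsilon)$ of its points, so that $c-\lambda\ell$ is again a code word whose weight has dropped by roughly $q+1$; a short count bounds the error contributed by the exceptional points and keeps the new weight below the bound. Iterating this, with base case $\wt{c}=\theta_1$ --- where Theorem~\ref{MinimumWeight} forces $c$ to be a scalar multiple of a line --- shows that $c=\sum_{i=1}^{N}\alpha_i\ell_i$ for distinct lines $\ell_i$ and scalars $\alpha_i\neq0$. Finally, each $\ell_i$ contains at least $q-N+2$ points lying on no other $\ell_j$, hence $Nq-N^2+2N\leqslant\wt{c}\leqslant N(q+1)$; as the weight bound forces $N\leqslant\lfloor\sqrt q\rfloor$ and so $N(N-1)<q+1$, these inequalities sandwich $N=\lceil\wt{c}/(q+1)\rceil$, which is the asserted number of lines.

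The crux --- and the place where the hypotheses $q>27$, $h\geqslant2$ and the two distinct weight bounds are genuinely needed --- is the quantitative polynomial step: one must keep the degree bookkeeping in the derivation lemma sharp enough to reach the stated thresholds and, especially when $\sigma=0$ (so that $\supp{c}$ need not block all lines), rule out sporadic small $\sigma\bmod p$ configurations of Baer-subplane or unital type that are not combinations of lines. Once a long secant has been secured, the remainder of the argument is routine.
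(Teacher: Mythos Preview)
The paper does not contain a proof of this theorem: it is quoted verbatim as \cite[Theorem~4.3]{szonyi} and used as a black box (together with Theorems~\ref{ResultsBagchi} and~\ref{ResultsSzonyiWeinerPrime}) to derive Corollary~\ref{PlaneResults}. There is therefore no ``paper's own proof'' to compare your proposal against.

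That said, your sketch is a fair high-level description of the Sz\H{o}nyi--Weiner approach in the cited source: encode $c$ as a $\sigma\bmod p$ multiset, use a lacunary R\'edei polynomial and the derivation lemma to produce a long secant on which $c$ is essentially constant, peel it off, and iterate. Two cautions if you intend this as an actual proof rather than an outline. First, the reduction ``the canonical coordinatewise lift of $c$ to a multiset $m$ \dots\ its relevant size parameter is controlled by $\wt{c}$'' hides the main technical point: the multiset has total size up to $(p-1)\wt{c}$, not $\wt{c}$, and the R\'edei/derivation step is applied not to $m$ directly but to a carefully chosen auxiliary polynomial whose degree is tied to the number of \emph{non-$\sigma$ lines}; getting from the weight hypothesis to the degree hypothesis required for the lemma is where the specific thresholds come from, and you have not carried this out. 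Second, the iteration step ``$c-\lambda\ell$ has weight below the bound'' is not automatic: one must check that the exceptional points on $\ell$ do not push the new weight above the threshold, and the Sz\H{o}nyi--Weiner argument handles this via a separate count of short secants rather than a bare $O(\varepsilon)$ estimate. As written, your proposal is a plausible roadmap but not yet a proof.
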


	We can now summarise these results concerning $C_1(2,q)$ in one corollary:
	
	\begin{corollary}\label{PlaneResults}
		Let $c$ be a code word of $C_1(2,q)$, with $q=p^h$, $p$ prime, and $q\notin\{8,9,16,25,27,49\}$.
		\begin{itemize}
			\item If $\wt{c}\leqslant3q-4$, then $c$ is a linear combination of at most two lines.
			\item If $\wt{c}\leqslant3q+1$ and $q=121$, then $c$ is a linear combination of at most three lines.
			\item If $\wt{c}\leqslant\max\{3q+1,4q-22\}$ and $q>17$, $q\neq121$, then $c$ is a linear combination of at most three lines or given by Example \ref{OddCodewordGen}.
		\end{itemize}
	\end{corollary}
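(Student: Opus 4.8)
The plan is to read the three items off the planar results quoted above, splitting according to whether $q$ is prime and, when it is not, whether $h=2$ or $h>2$, and to convert the conclusion ``$c$ is a linear combination of exactly $\lceil\wt{c}/(q+1)\rceil$ lines'' of Theorem~\ref{ResultsSzonyiWeinerNonPrime} into the weaker ``linear combination of at most two (resp.\ three) lines'' by means of one elementary double count: if $c=\sum_{i=1}^{k}a_iH_i$ with distinct lines $H_i$ and all $a_i\in\mathbb F_p\setminus\{0\}$, then comparing the incidence count $\sum_j j\,m_j=k(q+1)$ with the pair count $\sum_j\binom{j}{2}m_j=\binom{k}{2}$ (where $m_j$ denotes the number of points on exactly $j$ of the $H_i$), and using $j\leqslant2\binom{j}{2}$ for $j\geqslant2$, one gets $m_1\geqslant k(q+2-k)$; every such point lies in $\supp{c}$, so $\wt{c}\geqslant3q-3$ when $k=3$ and $\wt{c}\geqslant4q-8$ when $k=4$.

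For the first item, if $q\leqslant4$ then $3q-4\leqslant2q$, so Theorems~\ref{MinimumWeight} and~\ref{ResultsPolverinoZullo} already show that a code word of weight at most $3q-4$ is a linear combination of at most two lines. If $q=p\geqslant5$ is prime, then $3q-4<3p-3$ and Theorem~\ref{ResultsBagchi} gives the claim directly. In the remaining cases $q\geqslant5$ is non-prime and lies outside $\{8,9,16,25,27,49\}$, hence $q>27$, and one checks that $3q-4$ is below the threshold of Theorem~\ref{ResultsSzonyiWeinerNonPrime}: if $h>2$ then $\lfloor\sqrt q\rfloor\geqslant5$, so $(\lfloor\sqrt q\rfloor+1)(q+1-\lfloor\sqrt q\rfloor)\geqslant6(q-\sqrt q)>3q-4$; if $h=2$ then $p\geqslant11$, and $\frac{(p-1)(p-4)(p^2+1)}{2p-1}>3p^2-4$ reduces to $p^3-11p^2+8p+3>0$. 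Hence $c$ is a linear combination of exactly $\lceil\wt{c}/(q+1)\rceil$ lines, a number that is at most $3$ (since $\wt{c}\leqslant3q-4<3(q+1)$) and cannot equal $3$ (that would force $\wt{c}\geqslant3q-3$), hence at most $2$.

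For the second item, $q=121$ has $h=2$ and $p=11$, and $\frac{(p-1)(p-4)(p^2+1)}{2p-1}=\frac{8540}{21}>3q+1$ (but $\frac{8540}{21}<4q-22$, which is precisely why the threshold is $3q+1$ here), so Theorem~\ref{ResultsSzonyiWeinerNonPrime} gives that $c$ is a linear combination of exactly $\lceil\wt{c}/122\rceil\leqslant3$ lines. For the third item: if $q=p>17$ is prime, the statement is exactly Theorem~\ref{ResultsSzonyiWeinerPrime}; if $q$ is non-prime with $q>17$, $q\neq121$ and $q\notin\{8,9,16,25,27,49\}$, then $q\geqslant32$, so $\max\{3q+1,4q-22\}=4q-22$, and again $4q-22$ lies below the threshold of Theorem~\ref{ResultsSzonyiWeinerNonPrime} — if $h>2$ because $(\lfloor\sqrt q\rfloor+1)(q+1-\lfloor\sqrt q\rfloor)\geqslant6(q-\sqrt q)>4q-22$, and if $h=2$ because then $p\geqslant13$ (the primes $5,7,11$ being barred, since $25$ and $49$ are excluded and $q=121$ is handled separately) and $\frac{(p-1)(p-4)(p^2+1)}{2p-1}>4p^2-22$ reduces to $p^4-13p^3+9p^2+39p-18>0$, valid for $p\geqslant13$. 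Thus $c$ is a linear combination of exactly $k:=\lceil\wt{c}/(q+1)\rceil$ lines; since $\wt{c}\leqslant4q-22<4(q+1)$ we have $k\leqslant4$, and $k=4$ is impossible as it would force $\wt{c}\geqslant4q-8>4q-22$, so $k\leqslant3$ — in particular $c$ falls under the first of the two alternatives in the statement.

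I anticipate no conceptual obstacle; the whole of the proof is the bookkeeping in the previous two paragraphs, namely checking for each admissible $q$ that the weight bound of the corollary stays strictly below the threshold of the applicable theorem among Theorems~\ref{ResultsBagchi}, \ref{ResultsSzonyiWeinerPrime} and~\ref{ResultsSzonyiWeinerNonPrime} — this is exactly what forces the exclusion set $\{8,9,16,25,27,49\}$ and the separate, weaker statement for $q=121$ — together with the single elementary estimate $\wt{c}\geqslant k(q+2-k)$ that lets one upgrade ``exactly $\lceil\wt{c}/(q+1)\rceil$ lines'' to ``at most two'' or ``at most three'' lines.
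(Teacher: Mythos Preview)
Your proof is correct and follows essentially the same approach as the paper: case split by whether $q$ is prime and, in the non-prime case, by whether $h=2$ or $h>2$, then invoke the appropriate theorem among Theorems~\ref{ResultsBagchi}, \ref{ResultsSzonyiWeinerPrime}, \ref{ResultsSzonyiWeinerNonPrime}, and finally rule out $k=4$ (resp.\ $k=3$) lines by a weight lower bound. Your double count giving $\wt{c}\geqslant m_1\geqslant k(q+2-k)$ is equivalent to the paper's simpler observation that each of the $k$ lines meets the remaining $k-1$ in at most $k-1$ points, so contributes at least $(q+1)-(k-1)$ points of multiplicity one to $\supp{c}$; you are also slightly more explicit than the paper in verifying the first item for non-prime $q$, which the paper leaves implicit.
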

	\begin{proof}
		If $q\leqslant4$, then $3q-4\leqslant2q$ and we can use Theorem \ref{ResultsPolverinoZullo}. If $q>4$ and $q$ is prime, the proof immediately follows from Theorem \ref{ResultsBagchi} and Theorem \ref{ResultsSzonyiWeinerPrime}.\\
		Suppose $q>4$ is not prime. Then, by assumption, $q>27$, which means that $\max\{3q+1,4q-22\}=4q-22$. To apply Theorem \ref{ResultsSzonyiWeinerNonPrime}, we only have to check the weight assumptions. One can verify that
		\begin{itemize}
			\item $4q-22 < (\lfloor\sqrt{q}\rfloor+1)(q+1-\lfloor\sqrt{q}\rfloor)$ if $q\geqslant10$,
			\item $3p^2+1 < \frac{(p-1)(p-4)(p^2+1)}{2p-1}$ if $q=p^2\geqslant121$,
			\item $4p^2-22 < \frac{(p-1)(p-4)(p^2+1)}{2p-1}$ if $q=p^2\geqslant144$.
		\end{itemize}
		We conclude that $c$ is a linear combination of at most $\lceil\frac{4q-22}{q+1}\rceil = 4$ lines. If $c$ is a linear combination of precisely $4$ lines, then its weight is at least $4\cdot\big((q+1)-3\big)=4q-8$, a contradiction.
	\end{proof}

	\section{The main theorem}
	
	Throughout this section, let $n\in\mathbb{N}\setminus\{0,1\}$ and $q=p^h$, with $p$ prime and $h\in\mathbb{N}\setminus\{0\}$. Let $c\in C_{n-1}(n,q)$ be an arbitrary code word. Furthermore, define
	\[
		B_{n,q}=\begin{cases}
		    \qquad\qquad2q^{n-1}\quad&\textnormal{if }q<7\textnormal{ or }q\in\{8,9,16,25,27,49\}\textnormal{,}\\
		    \Big(3q-\sqrt{6q}-\frac{1}{2}\Big)q^{n-2}\quad&\textnormal{if }q\in\{7,11,13,17\}\textnormal{,}\\
		    \Big(3q-\sqrt{6q}+\frac{9}{2}\Big)q^{n-2}\quad&\textnormal{if }q\in\{19,121\}\textnormal{,}\\
		    \Big(4q-4\sqrt{q}-\frac{25}{2}\Big)q^{n-2}\quad&\textnormal{if }q\in\{29,31,32\}\textnormal{,}\\
		    \Big(4q-\sqrt{8q}-\frac{33}{2}\Big)q^{n-2}\quad&\textnormal{otherwise.}\\
		\end{cases}
	\]
	This will be the assumed upper bound on the weight of $c$. By Theorem \ref{ResultsPolverinoZullo}, we can always assume that $q\geqslant7$ and $q\notin\{8,9,16,25,27,49\}$.
	
	\subsection{Preliminaries}
	
	Using Corollary \ref{PlaneResults} and Lemma \ref{InductiveLemma}, we can distinguish several types of small weight code words:
	
	\begin{definition}\label{DefPlane}
	Let $c$ be a code word, and $\pi$ a plane. We will call $\restr{c}{\pi}$
		\begin{itemize}
			\item a \emph{code word of type} $T_w$ if $\restr{c}{\pi}$ is a linear combination of at most two lines, with $w$ the weight of $\restr{c}{\pi}$.
			\item a \emph{code word of type} $\odd$ if $\restr{c}{\pi}$ is a code word as described in Example \ref{OddCodewordGen}.
			\item a \emph{code word of type} $\nonconcurrent$ if $\restr{c}{\pi}$ is a linear combination of three nonconcurrent lines.
			\item a \emph{code word of type} $\concurrent$ if $\restr{c}{\pi}$ is a linear combination of three concurrent lines.
			\item a \emph{code word of type} $\boldsymbol{\mathcal{T}}=\{\zero,\codline,\difference,\twolines,\odd,\nonconcurrent,\concurrent\}$ if $\restr{c}{\pi}$ is a code word of one of the types mentioned above.
			\item a \emph{code word of type} $\boldsymbol{\mathcal{O}}$ if $\restr{c}{\pi}$ is \emph{not} a code word of one of the types mentioned above.
		\end{itemize}
		 We will often make no distinction between the code word $\restr{c}{\pi}$ and the plane $\pi$: if $\restr{c}{\pi}$ is a code word of a certain type $T$, we will call $\pi$ a \emph{plane of type} $T$.
	\end{definition}

	\begin{proposition}\label{LinesIncharacterisedPlanes}
		If $\pi$ is a plane of type $\boldsymbol{\mathcal{T}}$ in PG($n,q$), then all lines of $\pi$ are either short or long secants to $\supp{c}$. If the type of $\pi$ is an element of $\{T_0,T_{q+1},T_{2q},T_{2q+1}\}$, then all lines intersect $\supp{c}$ in at most $2$ or in at least $q$ points.\qed
	\end{proposition}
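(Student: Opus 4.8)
The plan is to reduce the statement to a claim about planar code words and then to verify that claim type by type. Since every line $\ell$ of $\pi$ is contained in $\pi$, we have $\ell\cap\supp c=\ell\cap\supp{\restr c\pi}$, so it is enough to show: if $v\in C_1(2,q)$ is a code word of one of the seven types in $\boldsymbol{\mathcal T}$, then every line of the plane meets $\supp v$ in at most $3$ points or in at least $q-1$ points; and if $v$ has type $\zero$, $\codline$, $\difference$ or $\twolines$, then every line meets $\supp v$ in at most $2$ points or in at least $q$ points. The crux is the claim that for each such $v$ there is a set $\mathcal L$ of lines with $|\mathcal L|\leqslant3$ such that (a) $\supp v\subseteq\bigcup_{\ell\in\mathcal L}\ell$, and (b) every line of $\mathcal L$ meets $\supp v$ in at least $q-1$ points; moreover $|\mathcal L|\leqslant2$ and the bound in (b) improves to $q$ when $v$ has type $\zero$, $\codline$, $\difference$ or $\twolines$. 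Granting this, the proposition follows at once: a line $\ell\notin\mathcal L$ meets each member of $\mathcal L$ in exactly one point, hence $|\ell\cap\supp v|\leqslant|\mathcal L|$, which is $\leqslant3$ (resp.\ $\leqslant2$); and a line $\ell\in\mathcal L$ meets $\supp v$ in at least $q-1$ (resp.\ $q$) points by (b).

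To prove the claim I would go through the types. For $\zero$ take $\mathcal L=\emptyset$. For $\codline$, $v$ is a nonzero scalar multiple of the incidence vector of a single line $\ell_0$; take $\mathcal L=\{\ell_0\}$, all $q+1$ of whose points lie in $\supp v$. For $\difference$ and $\twolines$ write $v=\alpha_1\ell_1+\alpha_2\ell_2$ with $\alpha_1,\alpha_2\neq0$ and $\ell_1\neq\ell_2$, and set $\mathcal L=\{\ell_1,\ell_2\}$; every point of $\ell_i$ not on $\ell_j$ ($j\neq i$) has $v$-value $\alpha_i\neq0$, so the only possible hole on $\ell_i$ is the point $\ell_1\cap\ell_2$ (and this is a hole exactly in the $\difference$-case $\alpha_1+\alpha_2=0$), so $\ell_i$ carries at least $q$ points of $\supp v$. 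For $\concurrent$ and $\nonconcurrent$ write $v=\alpha_1\ell_1+\alpha_2\ell_2+\alpha_3\ell_3$; we may assume the $\ell_i$ distinct and all $\alpha_i\neq0$, as otherwise $v$ is a linear combination of at most two lines and has already been handled. Take $\mathcal L=\{\ell_1,\ell_2,\ell_3\}$; once more a point of $\ell_i$ lying on no other $\ell_j$ has value $\alpha_i\neq0$, so the holes on $\ell_i$ lie among its intersection points with the other two lines --- a single such point in the concurrent case, the two vertices $\ell_i$ carries in the triangle case --- and hence $\ell_i$ meets $\supp v$ in at least $q$, respectively at least $q-1$, points. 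Finally, for $\odd$ let $v$ be as in Example \ref{OddCodewordGen}, with $\supp v$ covered by three concurrent lines $m,m',m''$ through a common point $S$; put $\mathcal L=\{m,m',m''\}$, which gives (a) immediately. For (b), unwinding Example \ref{OddCodeword} shows that on each of $m,m',m''$, after the projective reparametrisation of the line induced by the collineation $\pi$, the function $v$ restricted to the $q$ points off $S$ is an affine function of the parameter with nonzero leading coefficient, hence attains every value of $\mathbb{F}_p$ exactly once; therefore each of $m,m',m''$ carries exactly $q-1$ points of $\supp v$.

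I expect the only genuine work to be the $\odd$ case, where one has to look inside Example \ref{OddCodewordGen} to read off the support: it is the single type for which a defining line can miss $\supp v$ at a point that is not an intersection of two of the lines, and this --- together with the triangle type $\nonconcurrent$, where a side may pass through two holes --- is precisely what forces the first assertion to use the weaker bound ``at most $3$ or at least $q-1$'' rather than ``at most $2$ or at least $q$''. Everything else reduces to the elementary fact that two distinct lines of a projective plane meet in exactly one point.
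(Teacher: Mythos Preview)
Your proposal is correct and does precisely the case-by-case verification that the paper leaves implicit: note that in the paper the proposition carries a bare \verb|\qed| and no proof, i.e.\ the authors regard it as immediate from the definitions of the seven types. Your argument supplies exactly those details.

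One small slip that does not affect the conclusion: in the $\odd$ case you write that each of $m,m',m''$ carries \emph{exactly} $q-1$ points of $\supp v$. Your affine-function count only covers the $q$ points of the line off the common point $S$; among those there are indeed exactly $q-1$ support points. But $S$ itself may lie in $\supp v$ (this is the $\wt d=3p-2$ alternative in Proposition~\ref{PropOddCodeword}), in which case the line carries $q$ support points. Either way the line is a long secant, so your conclusion ``at least $q-1$'' stands; just replace ``exactly'' by ``at least''.
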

	
	The following is a generalisation of Definition \ref{DefPlane} to arbitrary dimension.

	\begin{definition}\label{DefHyperplane}
		Let $\Pi$ be a $k$-space of PG($n,q$), $2\leqslant k\leqslant n$. We will call the code word $\restr{c}{\Pi}$ \emph{a code word of type} $T\in\boldsymbol{\mathcal{T}}$ if the following is true:
		\begin{itemize}
			\item there exists a $(k-3)$-space $\kappa$ in $\Pi$ such that $\restr{c}{\kappa}$ is a scalar multiple of $\boldsymbol{1}$.
			\item there exists a plane $\pi$ in $\Pi$ of type $T$, disjoint to $\kappa$.
			\item for all points $P\in\Pi\setminus\kappa$, $c(P)=c\big(\spann{\kappa}{P}\cap\pi\big)$.
		\end{itemize}
		If $\restr{c}{\Pi}$ is a code word of type $T$, we will often call the space $\Pi$ a space of type $T$ as well. If the type $T\in\boldsymbol{\mathcal{T}}$ is not known, we will call the code word $\restr{c}{\Pi}$ (or the space $\Pi$) \emph{of type} $\boldsymbol{\mathcal{T}}$. Remark that, if $k=2$, the above definition coincides with Definition \ref{DefPlane}.
	\end{definition}
	
	The upcoming theorem is the main theorem of this article, which is an improvement of Theorem \ref{ResultsPolverinoZullo} when $q\geqslant7$ and $q\notin\{8,9,16,25,27,49\}$:

	\begin{theorem}\label{THETHEOREM}
		Let $c$ be a code word of $C_{n-1}(n,q)$, with $n\geqslant3$, $q$ a prime power and $\wt{c}\leqslant B_{n,q}$. Then $c$ can be written as a linear combination of incidence vectors of hyperplanes through a fixed $(n-3)$-space.
	\end{theorem}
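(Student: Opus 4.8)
The plan is to prove the theorem by induction on $n$, the base case $n=3$ and the inductive step being handled by the same mechanism: control the restrictions of $c$ to low-dimensional subspaces, show these restrictions are of type $\boldsymbol{\mathcal{T}}$, and glue the resulting local ``cone'' descriptions into one global vertex. It is convenient to note first that the conclusion is equivalent to the assertion that PG($n,q$) is of type $\boldsymbol{\mathcal{T}}$ for $c$ in the sense of Definition \ref{DefHyperplane} (taking $\Pi=$ PG($n,q$) and some $(n-3)$-space $\mu=\kappa$): in one direction, if $c=\sum_i\alpha_iH_i$ with every $H_i\supseteq\mu$, then $\restr{c}{\mu}$ is the constant $\sum_i\alpha_i$ by Lemma \ref{SumOfCoefficients} and $c$ is constant on the fibres of the quotient of PG($n,q$) by $\mu$, so $c$ is such a cone; conversely, if $c$ is such a cone over a plane $\pi$ disjoint from $\mu$, then $\restr{c}{\pi}\in C_1(2,q)$ by Lemma \ref{InductiveLemma}, it has small weight (see below), hence by Corollary \ref{PlaneResults} it is a linear combination of lines of $\pi$, and lifting each of those lines to the hyperplane through $\mu$ that it spans recovers $c$. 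So it suffices to produce $\mu$.

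The first step is a weight-distribution count. Summing $\wt{\restr{c}{\Pi}}$ over all $k$-spaces $\Pi$ equals (the number of $k$-spaces through a point) times $\wt{c}$, so the average weight of a plane-section is $\frac{\theta_2}{\theta_n}\wt{c}<\wt{c}/q^{n-2}\leqslant B_{n,q}/q^{n-2}$; the value of $B_{n,q}$ is chosen precisely so that this quantity lies below the threshold up to which Corollary \ref{PlaneResults} classifies code words of $C_1(2,q)$. Refining the count — summing over planes, or hyperplanes, through a fixed line, and using that a plane of type $\boldsymbol{\mathcal{O}}$ must have much larger weight, dramatically larger when $q$ is not prime via Theorems \ref{ResultsSzonyiWeinerPrime} and \ref{ResultsSzonyiWeinerNonPrime} — one shows that ``almost all'' planes are of type $\boldsymbol{\mathcal{T}}$, whence by Proposition \ref{LinesIncharacterisedPlanes} almost all lines are short or long secants to $\supp{c}$. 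Bootstrapping (a line lying on a single type-$\boldsymbol{\mathcal{T}}$ plane is already short or long, and this forces neighbouring planes and lines to behave) upgrades this to the statement that \emph{every} line of PG($n,q$) is a short or long secant to $\supp{c}$.

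With the secant structure established one ascends the flag from dimension $2$ up to dimension $n$. By Lemma \ref{InductiveLemma} every hyperplane-section lies in $C_{n-2}(n-1,q)$, and a hyperplane whose section-weight is at most $B_{n-1,q}$ is of type $\boldsymbol{\mathcal{T}}$ by the induction hypothesis (for $n=3$ this is Corollary \ref{PlaneResults} itself), i.e. a cone over an $(n-4)$-space; the weight count again guarantees enough such hyperplanes, and, level by level, that most $k$-spaces are cones over $(k-3)$-spaces. The crux is to show that these local cone structures are mutually compatible and glue to a single $(n-3)$-space $\mu$: one must reconcile the various plane and hyperplane types $\odd$, $\nonconcurrent$, $\concurrent$, $\difference$, $\twolines$ that can occur in different sections, using that two cones sharing a sufficiently large section must share the trace of their vertices, and using the constancy of $\kappa\cdot c$ from Lemma \ref{SumOfCoefficients} to match the scalars. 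Once $\mu$ is found, the first paragraph finishes the proof: $\bar{c}=\restr{c}{\pi}\in C_1(2,q)$ for a plane $\pi$ disjoint from $\mu$ has weight roughly $\wt{c}/q^{n-2}\leqslant B_{n,q}/q^{n-2}$, Corollary \ref{PlaneResults} writes it as a combination of lines of $\pi$, and lifting those lines expresses $c$ as a linear combination of hyperplanes through $\mu$.

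I expect the gluing in the last step to be the main obstacle, for two reasons. First, the weight estimates only ever deliver ``most'' subspaces of a given dimension being of type $\boldsymbol{\mathcal{T}}$, so one has to pass from ``most'' to ``all'' at each level while propagating the vertex consistently; the margins are tight, which is exactly why the constants in $B_{n,q}$ are what they are and why the sporadic $q\in\{25,27,29,31,32,49,121\}$ must be excluded or given a smaller bound — these are the $q$ for which the planar input (Corollary \ref{PlaneResults}, hence Theorems \ref{ResultsBagchi}, \ref{ResultsSzonyiWeinerPrime}, \ref{ResultsSzonyiWeinerNonPrime}) is weaker or still admits the extra code words of Example \ref{OddCodewordGen}. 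Second, the case analysis over admissible types is lengthy: a cone over a $\concurrent$-plane, over a $\nonconcurrent$-plane, and over a $\difference$- or $\twolines$-plane each yield a different global picture, and one must show either that these are genuinely incompatible over a common vertex or that their coexistence forces $\wt{c}>B_{n,q}$.
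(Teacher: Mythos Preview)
Your outline has the right shape (induction on $n$, planar input via Corollary \ref{PlaneResults}, local cone structures to be glued), but the proof as written is incomplete and is missing the concrete device that makes the gluing work in the paper: a fixed $3$-secant used as an anchor. You never single out $3$-secants, yet essentially every step after the secant lemma in the paper is organised around one. Specifically: once all lines are short or long and $\wt{c}>(3q-6)\theta_{n-2}+2$, a $3$-secant $t$ exists (Lemma \ref{Exist3Secant}); every plane through $t$ is of type $\boldsymbol{\mathcal{T}}$ (Lemma \ref{PlanesThrough3Secant}); hence every $k$-space through $t$ has weight at most $B_{k,q}$ (Lemma \ref{SpacesThrough3Secant}), so induction applies to every hyperplane through $t$; a type-consistency argument (Lemmas \ref{OneTypeCat}, \ref{UniqueLongSecants}, \ref{NotAllCatBHyper}) forces at least one of those hyperplanes to be of type $\concurrent$, and its $(n-3)$-dimensional vertex $\kappa$ is the candidate for $\mu$; finally the vertex is propagated by walking along chains of $3$-secants inside a fixed plane $\pi\supset t$ and invoking Lemma \ref{VerticesAreEqual} to show that each new $\concurrent$-hyperplane met along the way has the same vertex $\kappa$. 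Your ``most to all'' bootstrapping and your vague ``two cones sharing a large section share the trace of their vertices'' do not supply this mechanism; in particular they give no reason why a single common $(n-3)$-space should emerge, rather than a family of locally compatible but globally drifting vertices.

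Two further gaps. First, your averaging argument only gives that an \emph{average} plane has small weight, and you wave at bootstrapping to reach ``every line is short or long''; the paper instead assumes a bad $m$-secant, counts through all planes on it, and extracts a telescoping lower bound $\tfrac12 j(j+1)$ on plane-weights (Lemma \ref{LinesAndPlanes}) to force $\wt{c}>B_{n,q}$. Second, you skip the low-weight regime entirely: when $\wt{c}\leqslant(3q-6)\theta_{n-2}+2$ the paper proves separately (Theorem \ref{TwoHyperplaneTheorem}, via Lemma \ref{ClassificationOf012QQ+1Secants}) that $c$ is a combination of at most two hyperplanes; only above that threshold does the $3$-secant machinery kick in. Without that split your induction base and the existence of a $3$-secant are both unjustified.
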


	\begin{corollary}\label{THETHEOREMCOR}
		Let $c$ be a code word of $C_{n-1}(n,q)$, with $n\geqslant3$, $q$ a prime power and $\wt{c}\leqslant B_{n,q}$. Then $c$ is a code word of type $\boldsymbol{\mathcal{T}}$.
	\end{corollary}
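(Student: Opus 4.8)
The plan is to deduce Corollary~\ref{THETHEOREMCOR} directly from Theorem~\ref{THETHEOREM}. By that theorem we may write $c=\sum_i\alpha_iH_i$ with $\alpha_i\in\mathbb{F}_p\setminus\{0\}$ and $H_i\in\mathcal H(n,q)$ all passing through one fixed $(n-3)$-space $\mu$ (and when $q<7$ or $q\in\{8,9,16,25,27,49\}$, so that $B_{n,q}=2q^{n-1}$, Theorem~\ref{ResultsPolverinoZullo} even shows there are at most two such $H_i$). I will verify that $\mu$ can play the part of the $(k-3)$-space $\kappa$ of Definition~\ref{DefHyperplane}, applied with $k=n$ and $\Pi=$ PG($n,q$).

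Two of the three conditions of that definition come almost for free. Every point of $\mu$ lies on every $H_i$, so $\restr{c}{\mu}$ is constantly $\sum_i\alpha_i$, a scalar multiple of $\boldsymbol 1$. For the remaining condition, fix a plane $\pi$ complementary to $\mu$; since $\mu\vee\pi=$ PG($n,q$), a Grassmann count shows that for every point $P\notin\mu$ the $(n-2)$-space $\spann{\mu}{P}$ meets $\pi$ in exactly one point $P'$, that $P'\notin\mu$, and that $\spann{\mu}{P}=\spann{\mu}{P'}$ (so also $\pi\not\subseteq H_i$, because $\pi\cap H_i$ is a line). The key is the ``cone'' relation: as every $H_i$ contains $\mu$, one has $P\in H_i\Leftrightarrow\spann{\mu}{P}\subseteq H_i\Leftrightarrow P'\in H_i$, whence $\{i:P\in H_i\}=\{i:P'\in H_i\}$ and $c(P)=c(P')=c(\spann{\mu}{P}\cap\pi)$. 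Thus $c$ is the cone with vertex space $\mu$ over $\restr{c}{\pi}$.

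It remains to check that $\pi$ is a plane of type $\boldsymbol{\mathcal T}$. By Lemma~\ref{InductiveLemma}, $\restr{c}{\pi}\in C_1(2,q)$. The cone relation shows that each point of $\supp{\restr{c}{\pi}}$ lifts to the $q^{n-2}$ points of $\spann{\mu}{P'}\setminus\mu$, all lying in $\supp{c}$, and that these fibres are pairwise disjoint; hence $\wt{\restr{c}{\pi}}\leqslant\wt{c}/q^{n-2}\leqslant B_{n,q}/q^{n-2}$. For each branch of the definition of $B_{n,q}$ one verifies that $B_{n,q}/q^{n-2}$ stays below the relevant threshold in Corollary~\ref{PlaneResults} --- namely $3q-4$ when $q\in\{7,11,13,17\}$, $3q+1$ when $q=121$, and $\max\{3q+1,4q-22\}$ in the remaining cases with $q\geqslant7$ and $q\notin\{8,9,16,25,27,49\}$ --- the inequalities being of the harmless kind $\sqrt{8q}+\tfrac{33}{2}\geqslant22$. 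Consequently $\restr{c}{\pi}$ is a linear combination of at most three lines or the code word of Example~\ref{OddCodewordGen} (by Corollary~\ref{PlaneResults} in those ranges, and because $\restr{c}{\pi}$ is then a combination of at most two lines in the case $B_{n,q}=2q^{n-1}$), so $\pi$ is of one of the types $T_0$, $T_{q+1}$, $T_{2q}$, $T_{2q+1}$, $\nonconcurrent$, $\concurrent$, $\odd$. Together with $\kappa=\mu$ and the cone relation, Definition~\ref{DefHyperplane} then says that $c$ is a code word of type $\boldsymbol{\mathcal T}$.

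The only real work is the case analysis in the last paragraph: running through every branch of $B_{n,q}$ against the cases of Corollary~\ref{PlaneResults} --- in particular isolating $q=121$ and the small values $q\in\{7,11,13,17,19\}$ --- while noting that the values $q\in\{8,9,16,25,27,49\}$ excluded from Corollary~\ref{PlaneResults} are exactly those with $B_{n,q}=2q^{n-1}$, which Theorem~\ref{ResultsPolverinoZullo} already handles. The geometric core --- the cone relation --- is short.
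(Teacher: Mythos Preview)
Your proof is correct. Both you and the paper start from Theorem~\ref{THETHEOREM} and then verify the three clauses of Definition~\ref{DefHyperplane}; the constancy of $c$ on $\kappa$ and the cone relation $c(P)=c(\spann{\kappa}{P}\cap\pi)$ are dismissed by the paper as ``easily checked'', and you spell them out carefully. The one substantive difference is in how you show that a plane $\pi$ disjoint to $\kappa$ is of type $\boldsymbol{\mathcal T}$. The paper argues by contradiction via Lemma~\ref{LinesAndPlanes}: if every such plane were of type $\boldsymbol{\mathcal O}$, each would carry at least $\tfrac12q(q-1)$ support points, and the cone structure would force $\wt{c}\geqslant\tfrac12q^{n-2}(q^2-3q-2)>B_{n,q}$. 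You instead bound $\wt{\restr c\pi}\leqslant\wt{c}/q^{n-2}\leqslant B_{n,q}/q^{n-2}$ directly and feed this into Corollary~\ref{PlaneResults}. Your route is slightly more self-contained --- it bypasses Lemma~\ref{LinesAndPlanes} altogether --- at the price of the branch-by-branch check that $B_{n,q}/q^{n-2}$ sits below the relevant threshold of Corollary~\ref{PlaneResults}; the paper's route avoids that case analysis but leans on the extra lemma. Either way the argument is short once Theorem~\ref{THETHEOREM} is available.
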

	\begin{proof}
	    By Theorem \ref{THETHEOREM}, $c$ can be written as a linear combination of the incidence vectors of hyperplanes through a fixed $(n-3)$-space $\kappa$. If all planes disjoint to $\kappa$ are planes of type $\boldsymbol{\mathcal{O}}$, Lemma \ref{LinesAndPlanes} implies that $\wt{c}\geqslant\frac{1}{2}q^{n-2}(q^2-3q-2)$, a contradiction for all $q$. All other properties of Definition \ref{DefHyperplane} can easily be checked.
	\end{proof}
	
	\subsection{The proof}
	
	\subsubsection{The weight spectrum concerning lines and planes}
	
	In this subsection, we will prove some intermediate results, the first one stating that all lines contain few or many points of $\supp{c}$.
	
	\begin{lemma}\label{LinesAndPlanes}
		Suppose $\wt{c}\leqslant B_{n,q}$. Then
		\begin{enumerate}
		    \item all lines are either short or long secants to $\supp{c}$. If $q\leqslant17$, then all lines intersect $\supp{c}$ in at most $2$ or in at least $q$ points.
		    \item all planes of type $\boldsymbol{\mathcal{O}}$ contain at least $\frac{1}{2}q(q-1)$ points of $\supp{c}$.
		\end{enumerate}
	\end{lemma}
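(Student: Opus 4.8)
\emph{Strategy.} I would prove both statements together by induction on $n$, with the planar classification (Corollary~\ref{PlaneResults}, supplemented by Theorem~\ref{ResultsSzonyiWeinerNonPrime} when $q$ is not prime, and by Theorem~\ref{ResultsSzonyiWeinerPrime} when $q$ is prime) as the engine and pencil counts as the way to transport it to higher dimension. The base case $n=2$ is immediate: there Corollary~\ref{PlaneResults} together with Proposition~\ref{LinesIncharacterisedPlanes} gives (1), and (2) is empty because, under $\wt{c}\leqslant B_{2,q}$, Corollary~\ref{PlaneResults} forces $c$ itself to be of type $\boldsymbol{\mathcal{T}}$.

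\emph{Reducing (1) to (2) in dimension $n$.} Assume (2) and suppose some line $\ell$ is an $\alpha$-secant to $\supp{c}$ with $3<\alpha<q-1$ (with $2<\alpha<q$ when $q\leqslant17$). By Lemma~\ref{InductiveLemma} every plane $\pi\supseteq\ell$ carries a code word $\restr{c}{\pi}\in C_1(2,q)$ having $\ell$ as such a secant, so by Proposition~\ref{LinesIncharacterisedPlanes} each such $\pi$ is of type $\boldsymbol{\mathcal{O}}$, whence $\wt{\restr{c}{\pi}}\geqslant\tfrac12 q(q-1)$. Counting incidences of $\supp{c}$ with the $\theta_{n-2}$ planes through $\ell$ (a point off $\ell$ lies on exactly one, a point of $\supp{c}$ on $\ell$ on all of them) gives
\[
\wt{c}=\sum_{\pi\supseteq\ell}\wt{\restr{c}{\pi}}-(\theta_{n-2}-1)\alpha\;\geqslant\;\theta_{n-2}\cdot\tfrac12 q(q-1)-(\theta_{n-2}-1)\alpha\,,
\]
whose right-hand side is of order $\tfrac12 q^{n}$, far above $B_{n,q}$ — contradiction. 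For $q\leqslant17$ the extra values $\alpha\in\{3,q-1\}$ are handled the same way: a plane of type $\boldsymbol{\mathcal{T}}$ with a $3$- or $(q-1)$-secant cannot be of one of the at‑most‑two‑lines types, so it contains three lines or is of type $\odd$ and has weight at least $3q-3$; feeding $\min\{3q-3,\tfrac12 q(q-1)\}=3q-3$ into the count settles the $3$‑secant case instantly, while the $(q-1)$‑secant case also uses Lemma~\ref{SumOfCoefficients} on $\ell$ (which pins down the value distribution of $\restr{c}{\ell}$ and excludes that all plane sections through $\ell$ have the cheapest triangle/$\odd$ shape) together with the exact value of $B_{n,q}$ for those $q$.

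\emph{Proving (2) in dimension $n$.} Let $\pi$ be of type $\boldsymbol{\mathcal{O}}$ with $\wt{\restr{c}{\pi}}<\tfrac12 q(q-1)$. First I would show $\pi$ contains a secant that is neither short nor long: if not, every line of $\pi$ is a $\leqslant 3$‑ or $\geqslant(q-1)$‑secant to $\supp{\restr{c}{\pi}}$, and by Lemma~\ref{SumOfCoefficients} every line of $\pi$ has the same value‑sum $s$ under $\restr{c}{\pi}$; if $s\neq0$ then $\supp{\restr{c}{\pi}}$ is a blocking set of size $<\tfrac12 q(q-1)$ all of whose secants are short or long, which by the stability theory of small blocking sets is essentially a line, so $\restr{c}{\pi}$ is of type $\boldsymbol{\mathcal{T}}$; if $s=0$ then $\restr{c}{\pi}$ lies in the hull of $C_1(2,q)$, a small element of which is a scalar multiple of an $\odd$‑codeword or smaller, again of type $\boldsymbol{\mathcal{T}}$ — both cases contradicting type $\boldsymbol{\mathcal{O}}$. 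So $\pi$ has a medium secant $\ell$. For $n\geqslant4$ one then applies the induction inside each of the $\theta_{n-2}$ hyperplanes $\Sigma\supseteq\ell$: there $\ell$ is again a medium secant of $\restr{c}{\Sigma}\in C_{n-2}(n-1,q)$, so running (1)–(2) through the planes of $\Sigma$ on $\ell$ gives $\wt{\restr{c}{\Sigma}}$ of order $\tfrac12 q^{\,n-1}$, and summing over $\Sigma$ isolates $\wt{c}$ of order $\tfrac12 q^{n}$, contradicting $\wt{c}\leqslant B_{n,q}$.

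\emph{Main obstacle.} The genuinely hard case is $n=3$ (and, embedded in it, the small and exceptional values of $q$): there the hyperplane sections through a putative medium secant $\ell$ are \emph{planes}, and Corollary~\ref{PlaneResults} alone only forces each of them to have weight above roughly $4q$ — not enough to beat $B_{3,q}$ in the pencil count, since such a section could a priori be a combination of four concurrent lines, of weight $\approx4q$. Closing this gap means showing that the $q+1$ plane sections through $\ell$ cannot all be such cheap type‑$\boldsymbol{\mathcal{O}}$ configurations at once — exploiting that they all induce the \emph{same} code word $\restr{c}{\ell}$ and carrying out a more careful, weighted count — and it is precisely this calibration that forces the constant $\tfrac12 q(q-1)$ in (2) and the exact shape $\bigl(4q-\sqrt{8q}-\tfrac{33}{2}\bigr)q^{n-2}$ (respectively the adjusted values) of $B_{n,q}$.
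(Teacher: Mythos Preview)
Your overall strategy is quite different from the paper's, and two load-bearing steps are not actually carried out: the claim that a type-$\boldsymbol{\mathcal{O}}$ plane whose secants are all short or long must be of type $\boldsymbol{\mathcal{T}}$, and the entire case $n=3$.

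The paper does \emph{not} argue by induction on $n$; it gives a direct two-round bootstrap that works uniformly in every dimension. Suppose some line $s$ is an $m$-secant with $i\leqslant m\leqslant q+2-i$ (where $i=3$ for $q\leqslant 17$ and $i=4$ otherwise). Every plane through $s$ then has weight at least $A_q$ (the relevant threshold from Corollary~\ref{PlaneResults}), and the pencil count over the $\theta_{n-2}$ planes through $s$ yields $m\geqslant\frac{A_q\theta_{n-2}-\wt{c}}{\theta_{n-2}-1}$. Now fix one such plane $\pi$ and let $j$ be the smallest line-weight in $\pi$ that is at least $i$; note $j\leqslant m$. Through each of the $m$ points of $s\cap\supp{c}$ there is, inside $\pi$, a line other than $s$ of weight $\geqslant j$ (otherwise the lines through that point would give $\wt{\restr{c}{\pi}}\leqslant(i-1)q-1<A_q$). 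These are $m\geqslant j$ distinct lines, and adding them one at a time yields the \emph{telescopic} estimate
\[
\wt{\restr{c}{\pi}}\;\geqslant\; j+(j-1)+\cdots+1 \;=\; \tfrac12\,j(j+1).
\]
This bound holds for every plane through any $j$-secant, so a second pencil count gives $\wt{c}\geqslant\bigl(\tfrac12 j(j+1)-j\bigr)\theta_{n-2}+j$. Combining this with the first-round lower bound on $j$ produces a quadratic inequality in $\wt{c}$ whose failure is exactly what determines the form of $B_{n,q}$ (the computation is in the Appendix). Part~(2) then follows immediately once (1) is known: a type-$\boldsymbol{\mathcal{O}}$ plane must contain a long secant (else all lines are short and its weight is at most $2q+1<A_q$), and rerunning the telescopic count with $j\geqslant q-1$ gives $\wt{\restr{c}{\pi}}\geqslant\tfrac12 q(q-1)$.

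Your argument avoids this bootstrap and the gaps are genuine. For the sub-case ``all secants of $\pi$ are short or long'' you appeal to blocking-set stability and hull structure; neither is set up here, and the honest way to dispose of this case is precisely the telescopic step above (a long secant already forces weight $\geqslant\tfrac12(q-1)q$, so only short secants remain, whence $\wt{\restr{c}{\pi}}\leqslant 2q+3$ and Corollary~\ref{PlaneResults} finishes). More seriously, your induction collapses at $n=3$: the hyperplanes through a medium secant $\ell$ are then planes, the only available lower bound on their weight is roughly $B_{2,q}\approx 4q$, and the pencil count yields at best about $(q+1)B_{2,q}-q\alpha\approx 3q^{2}$, which is well below $B_{3,q}\approx 4q^{2}$. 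You flag this as ``the main obstacle'' but do not resolve it; the telescopic estimate is exactly what closes this gap, and it does so without any dimension induction. Similarly, your treatment of $(q-1)$-secants for $q\leqslant 17$ via a single $3q-3$ pencil count does not beat $B_{n,q}$ for large $n$; again the second-round improvement is what is needed.
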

	\begin{proof}
	     First of all, define the values
	     \[
		    A_q=\begin{cases}
		        3q-3\quad&\textnormal{if }q\in\{7,11,13,17\}\textnormal{,}\\
    		    3q+2\quad&\textnormal{if }q\in\{19,121\}\textnormal{,}\\
    		    4q-21\quad&\textnormal{otherwise,}\\
		    \end{cases}
		    \quad\textnormal{and}\quad
		    i=\begin{cases}
		        3\quad&\textnormal{if }q\in\{7,11,13,17\}\textnormal{,}\\
    		    4\quad&\textnormal{otherwise.}\\
		    \end{cases}
		\]
	    Suppose, on the contrary, that $s$ is an $m$-secant to $\supp{c}$, with $i\leqslant m\leqslant q+2-i$. By Proposition \ref{LinesIncharacterisedPlanes} and Corollary \ref{PlaneResults}, all planes through $s$ contain at least $A_q$ points of $\supp{c}$. We get
		\begin{align}
			\wt{c}&\geqslant A_q\theta_{n-2}-m(\theta_{n-2}-1)\nonumber\\
			\Leftrightarrow m&\geqslant \frac{A_q\theta_{n-2}-\wt{c}}{\theta_{n-2}-1}\label{LowerBoundM}\\
			\Rightarrow q+2-i&\geqslant\frac{A_q\theta_{n-2}-\wt{c}}{\theta_{n-2}-1}\textnormal{.}\label{UpperBoundOnLowerBound}
		\end{align}
		From \eqref{LowerBoundM} and \eqref{UpperBoundOnLowerBound}, we can conclude that all lines intersect $\supp{c}$ in at most $i-1$ or in at least $\frac{A_q\theta_{n-2}-\wt{c}}{\theta_{n-2}-1}$ points. Let $\pi\supseteq s$ be an arbitrary plane, thus $\wt{\restr{c}{\pi}}\geqslant A_q$, and define $j=\min{\{\wt{\restr{c}{l}}:l\subseteq\pi,\wt{\restr{c}{l}}\geqslant i\}}$. Choose a point $P\in s\cap\supp{c}$. If all other $q$ lines in $\pi$ through $P$ contain at most $i-1$ points of $\supp{c}$, then $\wt{\restr{c}{\pi}}\leqslant (i-2)q+m\leqslant(i-1)q-1<A_q$, a contradiction. Thus, through each point on $s\cap\supp{c}$ we find at least one line in $\pi$, other than $s$, containing at least $j$ points of $\supp{c}$. We find at least $m\geqslant j$ such lines, meaning that
		\begin{equation}\label{TelescopicSum}
		    \wt{\restr{c}{\pi}}\geqslant j + (j-1) + \dots + 1 = \frac{1}{2}j(j+1)\textnormal{.}
		\end{equation}
		This holds for all planes through an $m$-secant with $i\leqslant m\leqslant q+2-i$, in particular for all planes through a $j$-secant in $\pi$. As such, we get
		\[
		    \wt{c}\geqslant\bigg(\frac{1}{2}j(j+1)-j\bigg)\theta_{n-2}+j\textnormal{.}
		\]
		When combining this result with $j\geqslant\frac{A_q\theta_{n-2}-\wt{c}}{\theta_{n-2}-1}$, we get a condition on $\wt{c}$, eventually leading to $\wt{c}>B_{n,q}$, a contradiction. We refer to Appendix \ref{Appendix} for the arithmetic details.\\
		Let $\pi$ be a plane of type $\boldsymbol{\mathcal{O}}$. If no long secant is contained in this plane, $\wt{\restr{c}{\pi}}\leqslant2q+1<A_q$, a contradiction. Repeating the previous arguments, we get the same result as \eqref{TelescopicSum}, for $j\geqslant q-1$. This concludes the proof.
	\end{proof}
	
	Using this result, we can deduce the following.

	\begin{lemma}\label{q-1Implies3}
		Suppose $\wt{c}\leqslant B_{n,q}$. If there exists a $(q-1)$-secant to $\supp{c}$, then there exists a $3$-secant to $\supp{c}$ as well.
	\end{lemma}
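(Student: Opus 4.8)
The plan is to argue by contradiction. First I would note that, by Lemma \ref{LinesAndPlanes}(1), for $q\leqslant17$ every line meets $\supp{c}$ in at most $2$ or at least $q$ points, so a $(q-1)$-secant can occur only when $q>17$; hence we may assume $q>17$, in which case every line is a short or a long secant. Suppose that $s$ is a $(q-1)$-secant and that no $3$-secant exists; then every line meets $\supp{c}$ in at most $2$ or at least $q-1$ points.

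The core of the argument is to show that no plane through $s$ can be of type $\boldsymbol{\mathcal{T}}$. If a plane $\pi\supseteq s$ has $\restr{c}{\pi}$ of type $\zero$, $\codline$, $\difference$, $\twolines$ or $\concurrent$, I would list the possible secant lengths of each of these planar code words directly and check that none of them admits a $(q-1)$-secant, contradicting $s\subseteq\pi$. For the two remaining types $\odd$ and $\nonconcurrent$, the support of $\restr{c}{\pi}$ is covered by three lines and has at least $3q-3$ points (using Proposition \ref{PropOddCodeword} in the first case); since $s$ meets that support in $q-1>3$ points, $s$ has to be one of those three covering lines. Now choose $P\in s\cap\supp{c}$ outside the at most two points where the other two covering lines meet $s$. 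Each of the $q$ lines of $\pi$ through $P$ other than $s$ meets $\supp{c}$ in at most three points (one on each covering line), hence --- since no $3$-secant exists --- in at most two points, hence in at most one point off $s$. Counting the points of $\supp{c}\cap\pi$ lying off $s$ then gives $\wt{\restr{c}{\pi}}\leqslant(q-1)+q=2q-1$, contradicting $\wt{\restr{c}{\pi}}\geqslant3q-3$.

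Therefore every plane through $s$ is of type $\boldsymbol{\mathcal{O}}$, so by Lemma \ref{LinesAndPlanes}(2) it contains at least $\tfrac12q(q-1)$ points of $\supp{c}$. The $\theta_{n-2}$ planes through $s$ pairwise meet exactly in $s$, and $|s\cap\supp{c}|=q-1$, so
\[
\wt{c}\geqslant\theta_{n-2}\Big(\tfrac12q(q-1)-(q-1)\Big)+(q-1)=(q-1)\Big(\theta_{n-2}\tfrac{q-2}{2}+1\Big)>(q-1)\,q^{n-2}\,\tfrac{q-2}{2}.
\]
As $\tfrac12(q-1)(q-2)>4q$ for $q\geqslant11$ while $B_{n,q}<4q^{n-1}$, this yields $\wt{c}>B_{n,q}$, contradicting the hypothesis and finishing the proof.

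I expect the main obstacle to be the handling of planes of type $\odd$ and $\nonconcurrent$: one must verify, via the explicit description in Example \ref{OddCodewordGen} and the triangle configuration, that the $(q-1)$-secant $s$ really is one of the three lines carrying $\supp{\restr{c}{\pi}}$ (and not some other long secant), and that a suitable $P\in s\cap\supp{c}$ avoiding the at most two relevant vertices always exists. The secant-length checks for the other plane types and the final counting are routine.
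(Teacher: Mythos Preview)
Your proof is correct and follows essentially the same approach as the paper: both argue by contradiction, show that under the no-$3$-secant hypothesis every plane through the $(q-1)$-secant $s$ must be of type $\boldsymbol{\mathcal{O}}$, and then invoke Lemma~\ref{LinesAndPlanes}(2) together with the same count
\[
\wt{c}\geqslant\Big(\tfrac12 q(q-1)-(q-1)\Big)\theta_{n-2}+(q-1)
\]
to contradict $\wt{c}\leqslant B_{n,q}$. The only difference is that the paper compresses the middle step into the one-line remark ``planes of type $\boldsymbol{\mathcal{T}}$ containing a $(q-1)$-secant always contain a $3$-secant'', whereas you spell this out case by case; your counting argument for types $\odd$ and $\nonconcurrent$ is a valid (if slightly indirect) substitute for simply exhibiting a $3$-secant in those configurations.
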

	\begin{proof}
		Let $s$ be a $(q-1)$-secant and suppose, on the contrary, that no $3$-secants exist. Remark that planes of type $\boldsymbol{\mathcal{T}}$ containing a $(q-1)$-secant always contain a $3$-secant. Hence, by Lemma \ref{LinesAndPlanes}, all planes through $s$ contain at least $\frac{1}{2}q(q-1)$ points of $\supp{c}$. We get
		\[
			B_{n,q}\geqslant\wt{c}\geqslant\bigg(\frac{1}{2}q(q-1)-(q-1)\bigg)\theta_{n-2}+(q-1)\textnormal{,}
		\]
		which is a contradiction for all values of $q$.
	\end{proof}

	\begin{lemma}\label{Lines2AndQ}
		Suppose $\wt{c}\leqslant\min\big\{(3q-6)\theta_{n-2}+2,B_{n,q}\big\}$. Then all lines intersect $\supp{c}$ in at most $2$ or in at least $q$ points.
	\end{lemma}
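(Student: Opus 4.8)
The plan is to reduce the statement to the non-existence of a $3$-secant to $\supp{c}$. Once that is established: by Lemma \ref{LinesAndPlanes}(1) every line meets $\supp{c}$ in at most $3$ points --- hence at most $2$, since there is no $3$-secant --- or in at least $q-1$ points; and a $(q-1)$-secant cannot occur either, because Lemma \ref{q-1Implies3} (applicable as $\wt{c}\leqslant B_{n,q}$) would then produce a $3$-secant. So every line meets $\supp{c}$ in at most $2$ or at least $q$ points, which is the claim. It therefore suffices to rule out $3$-secants.

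I would do this by contradiction. Assume $s$ is a $3$-secant, and consider the $\theta_{n-2}$ planes $\pi$ through $s$. First I would determine their possible types. By Proposition \ref{LinesIncharacterisedPlanes}, every line in a plane of type $T_0$, $T_{q+1}$, $T_{2q}$ or $T_{2q+1}$ meets $\supp{c}$ in at most $2$ or at least $q$ points; since $\pi$ contains the $3$-secant $s$, it is of none of these types, so it is of type $\odd$, $\nonconcurrent$, $\concurrent$ or $\boldsymbol{\mathcal{O}}$. In each of these cases $\wt{\restr{c}{\pi}}\geqslant 3q-3$: for $\odd$ this is Proposition \ref{PropOddCodeword}; for $\nonconcurrent$ and $\concurrent$ it is a direct count, since the support of a linear combination of three distinct lines with non-zero coefficients can only miss points lying on more than one of the three lines (the three vertices of the triangle, resp.\ the single common point), giving weight at least $3q-3$, resp.\ $3q$; and for $\boldsymbol{\mathcal{O}}$, Lemma \ref{LinesAndPlanes}(2) gives $\wt{\restr{c}{\pi}}\geqslant\tfrac{1}{2}q(q-1)\geqslant 3q-3$, using $q\geqslant 7$. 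Hence every plane through $s$ carries at least $3q-3$ points of $\supp{c}$.

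The conclusion is then an incidence count: the $\theta_{n-2}$ planes through $s$ partition the points of $\supp{c}$ lying off $s$, each contributing at least $(3q-3)-3$ of them, while $s$ itself carries $3$ points of $\supp{c}$, so
\[
	\wt{c}\;\geqslant\;3+(3q-6)\,\theta_{n-2}\;>\;(3q-6)\,\theta_{n-2}+2\;\geqslant\;\wt{c}\textnormal{,}
\]
a contradiction. I do not expect a genuine obstacle here: the only things to verify are the elementary weight lower bounds for the three-line configurations and the inequality $\tfrac{1}{2}q(q-1)\geqslant 3q-3$ (clear for $q\geqslant 7$), after which the argument is a single line. If one prefers, the type bookkeeping in the second paragraph can be bypassed by observing directly that any plane containing a long secant but not of type $T_0$, $T_{q+1}$, $T_{2q}$ or $T_{2q+1}$ has weight at least $3q-3$.
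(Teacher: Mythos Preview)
Your proof is correct and follows essentially the same approach as the paper's: reduce to the non-existence of a $3$-secant via Lemmata \ref{LinesAndPlanes} and \ref{q-1Implies3}, then count points of $\supp{c}$ over the $\theta_{n-2}$ planes through a putative $3$-secant to obtain the contradiction $(3q-6)\theta_{n-2}+2\geqslant\wt{c}\geqslant(3q-6)\theta_{n-2}+3$. The only difference is cosmetic: where you carry out a type-by-type verification that every plane through a $3$-secant has weight at least $3q-3$, the paper simply invokes the first bullet of Corollary \ref{PlaneResults} (a plane of weight at most $3q-4$ is a linear combination of at most two lines, hence contains no $3$-secant), which packages the same information in one line.
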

	\begin{proof}
		By Lemma \ref{LinesAndPlanes} and Lemma \ref{q-1Implies3}, it suffices to prove that $3$-secants to $\supp{c}$ cannot exist. Suppose there exists a $3$-secant to $\supp{c}$. By Corollary \ref{PlaneResults}, all planes containing this $3$-secant have at least $3q-3$ points in common with $\supp{c}$. This gives us the following contradiction:
		\[
			(3q-6)\theta_{n-2}+2\geqslant \wt{c}\geqslant(3q-3-3)\theta_{n-2}+3\textnormal{.}\qedhere
		\]
	\end{proof}

	\subsubsection{Code words of weight $\boldsymbol{2q^{n-1}+\theta_{n-2}}$}\label{SecThirdWeight}

	In this section, we will prove Theorem \ref{TwoHyperplaneTheorem}, which essentially states that, if $\wt{c}\leqslant\min\big\{(3q-6)\theta_{n-2}+2,B_{n,q}\big\}$, the code word $c$ corresponds to a linear combination of at most \emph{two} hyperplanes.

    \begin{lemma}\label{ClassificationOf012QQ+1Secants}
        Assume that $S$ is a point set in PG($n,q$), $q \geqslant 7$, and every line intersects $S$ in at most $2$ or in at least $q$ points.
        Then one of the following holds:
        \begin{enumerate}
            \item[\textnormal{(1)}] $|S| \leqslant 2q^{n-1} + \theta_{n-2}$.
            \item[\textnormal{(2)}] The complement of $S$, denoted by $S^c$, is contained in a hyperplane.
        \end{enumerate}
        \end{lemma}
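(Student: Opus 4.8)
We work with the complement $T:=S^c$. Since every line meets $S$ in at most $2$ or at least $q$ points, every line meets $T$ in at most $1$ or at least $q-1$ points; and since $\theta_n-2q^{n-1}-\theta_{n-2}=q^{n-1}(q-1)$, the statement to prove is equivalent to: \emph{either $T$ is contained in a hyperplane, or $|T|\geqslant q^{n-1}(q-1)$.} The first observation is that no line meets $T$ in exactly $2,3,\dots,q-2$ points, so every line carrying two points of $T$ carries at least $q-1$ of them; call such lines \emph{long}. Hence any two points of $T$ lie on a unique long line, i.e.\ $(T,\{\text{long lines}\})$ is a linear space whenever $T$ is not collinear. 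If $T$ lies in a hyperplane we are done, so from now on we assume $T$ spans $\textnormal{PG}(n,q)$; then $|T|\geqslant 2$, so at least one long line exists.

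I would argue by induction on $n$. The case $n=1$ is exactly the hypothesis (a ``hyperplane'' being a point), and I treat $n=2$ directly (see below). For $n\geqslant 3$ I use two reductions. \textbf{(i) Projection.} Fix $P\in T$. A line of $\textnormal{PG}(n,q)/P\cong\textnormal{PG}(n-1,q)$ corresponds to a plane $\pi\ni P$, and one checks using the planar case that the number of long lines of $\pi$ through $P$ is $0$, $1$, or at least $q-1$ (when $T\cap\pi$ is not contained in a line of $\pi$, every point of $T\cap\pi$ lies on at least $q-1$ long lines of $\pi$). Therefore the projection $\overline T_P$ of $T$ from $P$ again meets every line of $\textnormal{PG}(n-1,q)$ in at most $1$ or at least $q-1$ points, and $\overline T_P$ spans $\textnormal{PG}(n-1,q)$ — otherwise all long lines through $P$, and hence all of $T$, would lie in a hyperplane. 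By induction $|\overline T_P|\geqslant q^{n-2}(q-1)$, i.e.\ \emph{every point of $T$ lies on at least $q^{n-2}(q-1)$ long lines.} \textbf{(ii) Restriction.} For each hyperplane $H$, $T\cap H$ inherits the property, so by induction either $|T\cap H|\geqslant q^{n-2}(q-1)$, or $T\cap H$ lies in an $(n-2)$-space of $H$.

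If every hyperplane is of the first kind, then $\theta_{n-1}|T|=\sum_H|T\cap H|\geqslant\theta_n\,q^{n-2}(q-1)$, and since $\theta_n=q\theta_{n-1}+1$ we get $|T|\geqslant(q+\tfrac1{\theta_{n-1}})q^{n-2}(q-1)>q^{n-1}(q-1)$, which is more than needed. Hence we may assume there is a hyperplane $H_0$ with $T\cap H_0$ contained in an $(n-2)$-space, and after replacing it by $\mu:=\langle T\cap H_0\rangle$ the essential case is $\dim\mu=n-2$ (a smaller $\mu$ only confines $T\cap H_0$ further and is easier). Then $T$ misses $H_0\setminus\mu$ entirely. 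Consider the pencil $H_0,H_1,\dots,H_q$ of hyperplanes through $\mu$; they split $\textnormal{PG}(n,q)\setminus\mu$ into affine pieces of size $q^{n-1}$, and $T\setminus\mu\subseteq\bigcup_{i\geqslant1}(H_i\setminus\mu)$. Since $T$ spans, not all of $H_1,\dots,H_q$ can be ``degenerate'' (comparing the $(n-2)$-space carrying $T\cap H_i$ with $\mu$ would otherwise force $T\subseteq\mu$); combining this with the long-line counts from (i) at points of $T$ lying outside $H_0$, and with the fact that the whole affine part $H_0\setminus\mu$ consists of holes, one squeezes out $|T|\geqslant q^{n-1}(q-1)$.

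The hard part is exactly this last ``degenerate'' case, together with the base case $n=2$: both require pushing the estimates to the \emph{sharp} constant. The crude bound ``a long line through $P$ contributes at least $q-2$ new points of $T$'' only gives $|T|\geqslant q^{n-2}(q-1)(q-2)+1$, short of $q^{n-1}(q-1)$ by about $2q^{n-1}$, so one is forced to track the holes (points of $S$): on a long line there are at most $2$ of them, on a thin line through a point of $T$ there are exactly $q$, and in the degenerate case all $q^{n-1}$ points of $H_0\setminus\mu$ are holes. For $n=2$ I would argue directly: if $S$ contains a line then taking a point of $S$ off it shows $|S|\geqslant1+(q+1)(q-1)=q^2$, so $|T|\leqslant q+1$, and a short case-check (using that two points of $T$ span a long line) forces $T$ onto a line; if $S$ contains no line, every line meets $S$ in $0,1,2$ or $q$ points, and fixing a $q$-secant $\ell$ with its unique hole $Y$ one shows, via the net of $q$-secants meeting $\ell$ and the projection from $Y$, that $|S|\leqslant 2q+1$ unless $S^c$ collapses onto a line.
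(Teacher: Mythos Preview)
Your framework has genuine content: the passage to $T=S^c$, the projection step (which is correct --- the fact that through any $P\in T$ in a plane $\pi$ with $T\cap\pi$ non-collinear there pass at least $q-1$ long lines follows by taking a long line off $P$ and noting that its $\geqslant q-1$ points of $T$ determine distinct long lines through $P$), and the double-count when every hyperplane is ``large'' are all fine. But the proof is not complete, and you say so yourself: the degenerate inductive case and the base $n=2$ are left as ``one squeezes out'' and a sketch that does not work.

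Concretely, your planar argument fails at the first step. You claim that if $S$ contains a line $\ell$ and there is $P\in S\setminus\ell$, then $|S|\geqslant q^2$. This is false: take $S=\ell\cup\{P\}$. Every line meets $S$ in $1$, $2$, or $q+1$ points, so the hypothesis holds, yet $|S|=q+2$. The lemma is still true here because $|S|\leqslant 2q+1$, but your case split (line in $S$ $\Rightarrow$ conclusion (2); no line in $S$ $\Rightarrow$ conclusion (1)) is simply wrong. Likewise, in the inductive step with a degenerate hyperplane $H_0$ you set up the pencil through $\mu$ but give no argument; summing the inductive bound over the other $q$ hyperplanes yields only $|T|\geqslant q^{n-1}(q-1)-(q-1)|T\cap\mu|$, which is short unless you control $|T\cap\mu|$, and you do not.

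The paper's proof avoids both difficulties with a single idea you are missing: a dichotomy between ``there exist two large hyperplanes and a point of $S$ outside both'' (Case~1) and ``there exists a small hyperplane and a point of $S^c$ outside it'' (Case~2). Elementary line-counting gives $|S|\geqslant q^n-3q^{n-1}+5q^{n-2}+1$ in Case~1 and $|S^c|\geqslant q^n-3q^{n-1}+2q^{n-2}+1$ in Case~2, and for $q\geqslant 7$ these cannot both hold. One then observes that three large hyperplanes force Case~1 and two small hyperplanes force Case~2, so exactly one case occurs. In Case~2 there are at most two large hyperplanes, and a double-count of incident pairs $(P,\Pi)$ with $P\in S\cap\Pi$ gives $|S|\leqslant 2q^{n-1}+\theta_{n-2}$ \emph{sharply}; in Case~1 a short descent shows $S^c$ sits in a hyperplane. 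This dichotomy is what replaces your unfinished ``squeezing'' and your broken $n=2$ case, and it is the key step you would need to supply.
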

        
        \begin{proof}
        We prove this by induction on $n$.
        Note that the statement is trivial for $n=1$, so assume that $n \geqslant 2$. Furthermore, we can inductively assume that for every hyperplane $\Pi$, either $|S \cap \Pi| \leqslant 2q^{n-2} + \theta_{n-3}$, in which case we call $\Pi$ a \emph{small hyperplane}, or $S^c \cap \Pi$ is contained in an $(n-2)$-subspace of $\Pi$, in which case we call $\Pi$ a \emph{large hyperplane}.\\
        
        \underline{Case $1$: There exist two large hyperplanes $\Pi_1$ and $\Pi_2$, and a point $P \in S \setminus (\Pi_1 \cup \Pi_2)$.}
        
        Consider the lines through $P$.
        At most $q^{n-2}$ of these lines intersect $\Pi_i \setminus \Pi_{3-i}$ in a point of $S^c$, and $\theta_{n-2}$ of these lines intersect $\Pi_1 \cap \Pi_2$.
        Hence, at least $\theta_{n-1} - 2q^{n-2} - \theta_{n-2} = q^{n-1} - 2q^{n-2}$ of these lines intersect $\Pi_1$ and $\Pi_2$ in distinct points of $S$.
        As $P \in S$, each of these lines contains at least three points of $S$.
        Therefore, they must contain at least $q$ points of $S$, thus at least $q-3$ points of $S \setminus\big(\Pi_1 \cup \Pi_2 \cup \{P\}\big)$.
        Since $\Pi_1 \cup \Pi_2$ contains at least $2q^{n-1}-q^{n-2}$ points of $S$, we know that
        \[
            |S| \geqslant (q^{n-1}-2q^{n-2})(q-3) + (2q^{n-1} - q^{n-2}) + 1 = q^n - 3q^{n-1} + 5q^{n-2} + 1.
        \]
        
        \underline{Case $2$: There exists a small hyperplane $\Pi$ and a point $P \in S^c \setminus\Pi$.}
        
        The small hyperplane $\Pi$ must contain at least $\theta_{n-1} - (2q^{n-2} + \theta_{n-3}) = q^{n-1} - q^{n-2}$ points of $S^c$.
        Every line through $P$ and a point of $\Pi \cap S^c$ intersects $S^c$ in at least 2, thus in at least $q-1$ points of $S^c$.
        This yields that
        \[
            |S^c| \geqslant (q^{n-1} - q^{n-2})(q-2) + 1 = q^n - 3q^{n-1} + 2q^{n-2} + 1.
        \]
        If both cases would occur simultaneously, then
        \begin{align*}
            \theta_n  = |S| + |S^c| 
            & \geqslant (q^n - 3q^{n-1} + 5q^{n-2} + 1) + (q^n - 3q^{n-1} + 2q^{n-2} + 1) \\
            & = 2q^n - 6q^{n-1} + 7q^{n-2} + 2 \textnormal{,}
        \end{align*}
        which is a contradiction if $q \geqslant 7$.
        Note that the existence of three large hyperplanes implies Case $1$, and the existence of two small hyperplanes implies Case $2$.
        Therefore, exactly one of these cases holds.
        
        Assume that Case $1$ holds. Hence, Case $2$ cannot hold, so if there exists a small hyperplane, it has to contain the entirety of $S^c$ and the proof is done. As such, we can assume that all hyperplanes are large.
        Take a hyperplane $\Pi$. If the points of $S^c \cap \Pi$ span an $(n-2)$-space $\Sigma$, then $S^c \subseteq \Sigma \subseteq \Pi$.
        Otherwise, if a point $P \in S^c$ lies outside of $\Sigma$, $\spann{\Sigma}{P}$ would be a (necessarily large) hyperplane, spanned by elements of $S^c$, a contradiction.
        In this way, we see that either some hyperplane contains all points of $S^c$, or for every hyperplane $\Pi$, $S^c \cap \Pi$ is contained in an $(n-3)$-subspace of $\Pi$.
        We can now use the same reasoning to prove that either some hyperplane contains all points of $S^c$, or for every hyperplane $\Pi$, $S^c \cap \Pi$ is contained in an $(n-4)$-space.
        Inductively repeating this process proves the theorem.
        
        Assume that Case $2$ holds. Then there are at most two large hyperplanes, otherwise Case $1$ would hold.
        Consider the set $V = \{(P,\Pi) : P \textnormal{ a point}, \, \Pi \textnormal{ a hyperplane}, \, P \in S\cap\Pi\}$.
        Counting the elements of $V$ in two ways yields
        \[
            |S| \theta_{n-1} = |V| \leqslant 2 \theta_{n-1} + (\theta_n - 2)( 2q^{n-2} + \theta_{n-3} ).
        \]
        Note that the right-hand side equals the exact size of $V$ in case $S$ is the union of two hyperplanes.
        Hence, the right-hand side equals $(2q^{n-1}+\theta_{n-2}) \theta_{n-1}$.
        Thus, $|S| \leqslant 2q^{n-1}+\theta_{n-2}$.
    \end{proof}

	\begin{lemma}\label{Exists2Secant}
		Suppose $\theta_{n-1}<\wt{c}\leqslant\min\big\{(3q-6)\theta_{n-2}+2,B_{n,q}\big\}$. Then there exists a $2$-secant to $\supp{c}$.
	\end{lemma}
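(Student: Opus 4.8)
The statement to prove is that under the weight bound, $\supp{c}$ admits a $2$-secant. The plan is to argue by contradiction: suppose every line is either a $0$-secant, a $1$-secant, or a long secant (no $2$-secants, and by Lemma \ref{Lines2AndQ} no $3$-secants either, since the hypothesis $\wt{c}\leqslant(3q-6)\theta_{n-2}+2$ is exactly what that lemma needs). So under our assumption every line meets $\supp{c}$ in at most $1$ point or in at least $q$ points. The idea is that this is an extremely restrictive configuration, and we want to show it forces $\wt{c}\leqslant\theta_{n-1}$, contradicting the lower bound $\theta_{n-1}<\wt{c}$.

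First I would pass to the complement. Let $S=\supp{c}$ and $S^c$ its complement (the holes of $c$). The condition ``every line meets $S$ in at most $1$ or at least $q$ points'' translates to: every line meets $S^c$ in at most $2$ points or in at least $q$ points (a line with $q-1$ or $q$ points of $S$ has $1$ or $2$ holes; a line with $\leqslant 1$ point of $S$ has $\geqslant q$ holes). Wait — more carefully, a line with exactly $q-1$ points of $S$ has $2$ points of $S^c$, with $q$ points of $S$ has $1$, with $q+1$ points of $S$ has $0$; and a line with $\leqslant 1$ point of $S$ has $\geqslant q$ points of $S^c$. So indeed every line meets $S^c$ in at most $2$ or in at least $q$ points, which is precisely the hypothesis of Lemma \ref{ClassificationOf012QQ+1Secants} applied to the point set $S^c$. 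That lemma then gives two cases: either $|S^c|\leqslant 2q^{n-1}+\theta_{n-2}$, or $S^c$ (equivalently $S$) — wait, the second alternative is that the complement of $S^c$, namely $S=\supp{c}$, is contained in a hyperplane.

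In the first case, $\wt{c}=|S|=\theta_n-|S^c|\geqslant\theta_n-2q^{n-1}-\theta_{n-2}=q^n+q^{n-1}-q^{n-1}-q^{n-2}+\theta_{n-2}-\theta_{n-2}$; I should just compute $\theta_n-2q^{n-1}-\theta_{n-2}$ directly and check it exceeds $B_{n,q}$ for all relevant $q$, giving a contradiction with $\wt{c}\leqslant B_{n,q}$ — this is a routine arithmetic verification, since $B_{n,q}$ is roughly $4q^{n-1}$ at most while $\theta_n-2q^{n-1}-\theta_{n-2}$ is of order $q^n$. In the second case $\supp{c}\subseteq H$ for some hyperplane $H$, so $\wt{c}\leqslant|H|=\theta_{n-1}$, directly contradicting the hypothesis $\theta_{n-1}<\wt{c}$. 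Either way we reach a contradiction, so a $2$-secant must exist.

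The main obstacle is really just making sure the translation to the complement is airtight (tracking the off-by-one between ``at most $1$'' / ``at least $q$'' for $S$ versus ``at most $2$'' / ``at least $q$'' for $S^c$, and confirming no intermediate line-intersection sizes slip through, which is where Lemma \ref{Lines2AndQ} is essential to rule out $3$-secants of $S$), and checking the numerical inequality $\theta_n-2q^{n-1}-\theta_{n-2}>B_{n,q}$ for the finitely many small $q$ and the generic bound — both of which are straightforward but must be done carefully. The structural content is entirely carried by Lemma \ref{ClassificationOf012QQ+1Secants}.
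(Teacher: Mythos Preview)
Your proof is correct but takes a genuinely different route from the paper's. The paper also argues by contradiction and uses Lemma~\ref{Lines2AndQ} to reduce to lines meeting $\supp{c}$ in $0$, $1$, $q$, or $q+1$ points, but then it splits into two subcases handled directly: if a $q$-secant $t$ exists, every plane through $t$ is shown (via Corollary~\ref{PlaneResults} and Lemma~\ref{ClassificationOf012QQ+1Secants} applied \emph{inside each plane}) to contain at least $q^2$ points of $\supp{c}$, giving $\wt{c}\geqslant q^n$; if no $q$-secant exists, then all lines are $0$-, $1$-, or $(q+1)$-secants, forcing $\supp{c}$ to be a subspace. Your approach instead passes to the complement $S^c$ and applies Lemma~\ref{ClassificationOf012QQ+1Secants} a single time in full dimension~$n$, which neatly absorbs both of the paper's subcases at once. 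This is arguably cleaner and more uniform; the paper's version, on the other hand, keeps the argument local (planar) and uses the classification lemma only in its simplest two-dimensional instance. Your numerical check $\theta_n-2q^{n-1}-\theta_{n-2}=q^n-q^{n-1}>B_{n,q}$ is immediate for $q\geqslant7$, so there is no hidden difficulty there.
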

	\begin{proof}
		Suppose that no $2$-secant to $\supp{c}$ exists and suppose $t$ is a $q$-secant to $\supp{c}$. By Corollary \ref{PlaneResults}, all planes through $t$ containing at most $2q+1$ points of $\supp{c}$ correspond to planes of type $\difference$. However, such planes contain several $2$-secants, contradicting the assumptions. Thus, by Lemma \ref{Lines2AndQ} and Lemma \ref{ClassificationOf012QQ+1Secants}, all planes through $t$ must contain at least $q^2$ points of $\supp{c}$. In this way,
		\[
			\wt{c}\geqslant\theta_{n-2}\cdot(q^2-q)+q=q^n\textnormal{,}
		\]
		which contradicts the weight assumptions. To conclude, all lines intersect $\supp{c}$ in $0$, $1$ or $q+1$ points, which is only possible if $\supp{c}$ is a subspace. Once again, this contradicts the weight assumptions.
	\end{proof}

	\begin{theorem}\label{TwoHyperplaneTheorem}
		Suppose $\wt{c}\leqslant\min\big\{(3q-6)\theta_{n-2}+2,B_{n,q}\big\}$. Then $c$ is a linear combination of the incidence vectors of at most two distinct hyperplanes.
	\end{theorem}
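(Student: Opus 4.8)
The plan is to argue by induction on $n$, after first converting the weight hypothesis into the structural bound $\wt{c}\leqslant 2q^{n-1}+\theta_{n-2}$ together with the existence of a short secant. First dispose of the trivial cases: if $c=0$ there is nothing to prove, and if $\wt{c}=\theta_{n-1}$ then Theorem \ref{MinimumWeight} already identifies $c$, so one may assume $\theta_{n-1}<\wt{c}\leqslant\min\{(3q-6)\theta_{n-2}+2,B_{n,q}\}$. This is exactly the hypothesis of Lemma \ref{Lines2AndQ}, so every line meets $\supp{c}$ in at most $2$ or at least $q$ points, and Lemma \ref{ClassificationOf012QQ+1Secants} applies to $S=\supp{c}$; its second alternative, the holes lying in a hyperplane, would force $\wt{c}\geqslant\theta_n-\theta_{n-1}=q^n>B_{n,q}$, so instead $\wt{c}\leqslant 2q^{n-1}+\theta_{n-2}$ --- which is precisely the size $|G_1\cup G_2|$ of a union of two hyperplanes. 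Lemma \ref{Exists2Secant} then supplies a $2$-secant $s$.

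Now induct on $n$. For $n=2$, $\wt{c}\leqslant\min\{3q-4,B_{2,q}\}\leqslant 3q-4$ and Corollary \ref{PlaneResults} gives the claim directly. For $n\geqslant 3$, assume the theorem in dimension $n-1$. If $\supp{c}$ lies in a hyperplane $H$, then every line off $H$ is a $1$-secant and Lemma \ref{SumOfCoefficients} --- which says the line-sum $\sum_{P\in\ell}c(P)$ equals a fixed constant $\sigma$ --- forces $c=\sigma\cdot H$; so assume $\supp{c}$ spans PG($n,q$). Next, choose a hyperplane $H\supseteq s$ whose restriction obeys the inductive weight bound $\min\{(3q-6)\theta_{n-3}+2,B_{n-1,q}\}$: here one uses the incidence count $\sum_{H\supseteq s}\wt{\restr{c}{H}}=2\theta_{n-2}+(\wt{c}-2)\theta_{n-3}$ over the $\theta_{n-2}$ hyperplanes through $s$, and for small $q$ or small $n$ the exact shape of $B_{n,q}$, to guarantee such an $H$ exists. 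By Lemma \ref{InductiveLemma} and Lemma \ref{Lines2AndQ}, $\restr{c}{H}$ is again a code word obeying the line condition, so induction writes it as a linear combination of at most two $(n-2)$-spaces $\Pi_1,\Pi_2$ of $H$; thus $\supp{c}\cap H\subseteq\Pi_1\cup\Pi_2$.

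The heart of the argument is to propagate this to $\supp{c}\subseteq G_1\cup G_2$ for two hyperplanes $G_i\supseteq\Pi_i$ of PG($n,q$): using that only of the order of $q^{n-2}$ long secants pass through any point of $\supp{c}$ (a count already implicit in Lemma \ref{LinesAndPlanes}) together with the fact that $\wt{c}\leqslant 2q^{n-1}+\theta_{n-2}=|G_1\cup G_2|$ leaves no room for $\supp{c}$ to protrude past a union of two hyperplanes, one forces the points of $\supp{c}$ outside $H$ to accumulate on a single extension $G_1$ of $\Pi_1$ and a single extension $G_2$ of $\Pi_2$. Once $\supp{c}\subseteq G_1\cup G_2$, put $M=G_1\cap G_2$; for every plane $\pi$ meeting both $G_i$ in a line and contained in neither, $\wt{\restr{c}{\pi}}\leqslant 2(q+1)\leqslant 3q-4$, so Corollary \ref{PlaneResults} makes $\restr{c}{\pi}$ a combination of at most two lines, which --- its support lying on $\pi\cap G_1$ and $\pi\cap G_2$ --- must be $\mu_1(\pi\cap G_1)+\mu_2(\pi\cap G_2)$. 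Sweeping $\pi$ over a suitable family and using Lemma \ref{SumOfCoefficients} to check that $\mu_i$ does not depend on $\pi$ yields $\alpha_1,\alpha_2$ with $c-\alpha_1 G_1-\alpha_2 G_2$ vanishing off $M$; one more application of Lemma \ref{SumOfCoefficients} along lines through $M$ then kills it on $M$ as well, so $c=\alpha_1 G_1+\alpha_2 G_2$.

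I expect the propagation step --- transferring the two-subspace structure from a single hyperplane to all of PG($n,q$) --- to be the main obstacle: the counting that collapses $\supp{c}$ onto $G_1\cup G_2$ is essentially tight, which is precisely why the fine-tuned thresholds $B_{n,q}$ and the list of exceptional $q$ enter the statement. I would also watch out for the characteristic-two case, where the relation $c(P_1)+c(P_2)=\sigma$ on a $2$-secant carries no information about the individual values and may require a separate treatment, as well as for the selection of the hyperplane $H$ for small parameters, where a crude averaging over the hyperplanes through $s$ is not by itself sufficient.
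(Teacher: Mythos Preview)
Your overall architecture --- reduce to $\wt{c}\leqslant 2q^{n-1}+\theta_{n-2}$ via Lemma~\ref{ClassificationOf012QQ+1Secants}, pick up a $2$-secant via Lemma~\ref{Exists2Secant}, and induct on $n$ --- matches the paper's set-up, but from there the two proofs diverge, and the divergence is exactly where your proposal has a gap.

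You pick a \emph{small} hyperplane $H\supseteq s$ (one where the inductive bound applies), write $\restr{c}{H}$ as a combination of two $(n-2)$-spaces $\Pi_1,\Pi_2$, and then try to ``propagate'' to $\supp{c}\subseteq G_1\cup G_2$ for hyperplanes $G_i\supseteq\Pi_i$. You flag this propagation as the main obstacle, and indeed it is one: nothing in your outline pins down which of the $q+1$ hyperplanes through $\Pi_i$ should be $G_i$, and the vague appeal to ``only of the order of $q^{n-2}$ long secants through a point'' together with the global bound $\wt{c}\leqslant|G_1\cup G_2|$ does not by itself force the points of $\supp{c}\setminus H$ onto a single pair of extensions. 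The counting is, as you say, essentially tight, which means an argument that is only ``of the right order'' will not close it. As written, this step is a genuine gap, not a routine detail.

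The paper sidesteps this entirely by working from the opposite end. Instead of a small hyperplane on which to apply induction, it first secures a \emph{large} hyperplane $\Pi_{n-1}$: one where Lemma~\ref{ClassificationOf012QQ+1Secants} places all the holes of $\restr{c}{\Pi_{n-1}}$ inside an $(n-2)$-space $H_{n-2}$. The $2$-secant $l$ is then analysed relative to $\Pi_{n-1}$. If both points of $l$ lie outside $\Pi_{n-1}$, a short counting argument forces a $\difference$- or $\twolines$-typed hyperplane through $l$ (here induction is used) to meet $\Pi_{n-1}$ in an $(n-2)$-space with too few holes, a contradiction. In the remaining case one point $P\in l$ lies in $\Pi_{n-1}$, and one shows that more than half the points of $\Pi_{n-1}$ carry the single value $\alpha=c(P)$. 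The proof then finishes by a \emph{minimal-counterexample} trick: if $c$ were a counterexample of minimal weight, then $c-\alpha\Pi_{n-1}$ has strictly smaller weight, hence is a combination of at most two hyperplanes, so $c$ is a combination of at most three; but three hyperplanes have weight at least $3(q^{n-1}-q^{n-2})$, exceeding the assumed bound. No propagation of structure from a subspace to the ambient space is needed.

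In short: your inductive input (a small hyperplane) gives too little rigidity to recover the global two-hyperplane structure without substantial extra work, whereas the paper's choice of a large hyperplane plus the minimality argument converts the problem into a weight inequality that is immediate.
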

	\begin{proof}
	    By Theorem \ref{ResultsPolverinoZullo}, we may assume that $2q^{n-1}<\wt{c}\leqslant\min\big\{(3q-6)\theta_{n-2}+2,B_{n,q}\big\}$. The proof will be done by induction on $n$. If $n=2$, Corollary \ref{PlaneResults} finishes the proof. Hence, let $n\geqslant3$ and assume, for each hyperplane $\Pi$, that if $\wt{\restr{c}{\Pi}}\leqslant\min\big\{(3q-6)\theta_{n-3}+2,B_{n-1,q}\big\}$, $\restr{c}{\Pi}$ is a linear combination of at most two distinct $(n-2)$-subspaces of $\Pi$.\\
		Suppose all hyperplanes contain at most $2q^{n-2}+\theta_{n-3}$ points of $\supp{c}$. Since $\supp{c}\neq\emptyset$, there must exist an $(n-2)$-space $\Pi_{n-2}$ intersecting $\supp{c}$ in $q^{n-2}$ or $\theta_{n-2}$ points, such that all hyperplanes through $\Pi_{n-2}$ contain either zero or $q^{n-2}$ points of $\supp{c}\setminus\Pi_{n-2}$. This yields
		\[
			\wt{c}\leqslant\theta_{n-2}+(q+1)q^{n-2} = \theta_{n-1} + q^{n-2} < 2q^{n-1}\textnormal{,}
		\]
		a contradiction.\\
		So consider a hyperplane $\Pi_{n-1}$, containing more than $2q^{n-2}+\theta_{n-3}$ points of $\supp{c}$. Due to Lemma \ref{ClassificationOf012QQ+1Secants}, $\wt{\restr{c}{\Pi_{n-1}}}\geqslant q^{n-1}$ and the holes of $\Pi_{n-1}$ are contained in an $(n-2)$-space $H_{n-2}$ of $\Pi_{n-1}$.
		By Lemma \ref{Exists2Secant}, we find a 2-secant $l$ to $\supp{c}$. Let $P$ and $Q$ be the points in $l \cap \supp{c}$ and let $\alpha=c(P)$.\\
		
		\underline{Case $1$: $P, Q \notin \Pi_{n-1}$.}
		
		Suppose there is at most one hyperplane of type $\difference$ or $\twolines$ through $l$. Fix an $(n-2)$-space $\Pi_{n-2}$ through $l$. By Lemma \ref{ClassificationOf012QQ+1Secants}, at least $q$ hyperplanes through $\Pi_{n-2}$ each contain at least $q^{n-1}$ points of $\supp{c}$, thus
		\[
			\wt{c}\geqslant q^{n-1} + (q-1)\cdot\big(q^{n-1} - \theta_{n-2}\big) = q^n - q^{n-1} + 1\textnormal{,}
		\]
		which exceeds the imposed upper bound on $\wt{c}$ for all prime powers $q$, a contradiction.\\
		Hence, we can choose a $\difference$- or $\twolines$-typed hyperplane $\Sigma_{n-1}$ through $l$, different from the hyperplane $\spann{H_{n-2}}{l}$.
		Therefore, all holes in $\Sigma_{n-1} \cap \Pi_{n-1}$ are contained in the $(n-3)$-space $\Sigma_{n-1}\cap H_{n-2}$.
		As $\supp{\restr{c}{\Sigma_{n-1}}}$ is the union or symmetric difference of precisely two $(n-2)$-subspaces and as $\Sigma_{n-1} \cap \Pi_{n-1}$ must be one of these two, the latter contains either $P$ or $Q$, contrary to the assumption of this case.\\
		
		\underline{Case $2$: $P \in \Pi_{n-1}$.}
		
		Remark that, due to Lemma \ref{ClassificationOf012QQ+1Secants}, $\wt{c}\leqslant2q^{n-1}+\theta_{n-2}$. From this, we get that there are at least $q^{n-2}$ planes through $l$ containing at most $2q+1$ points of $\supp{c}$. Otherwise, we would have
		\[
			2q^{n-1}+\theta_{n-2}\geqslant\wt{c}> q^{n-2}\cdot(2q-2)+(\theta_{n-2}-q^{n-2})q^2+2\textnormal{,}
		\]
		a contradiction whenever $q>2$.\\
		The space $\Pi_{n-1}$ contains $\theta_{n-3}$ planes through a fixed line, so there exists a plane $\pi$ through $l$, not contained in $\Pi_{n-1}$, having at most $2q+1$ points of $\supp{c}$. If $Q \in \Pi_{n-1}$, we could choose another 2-secant lying in such an `external' plane to $\Pi_{n-1}$ and replace $l$ (and $Q$ correspondingly) with this 2-secant. In this way, we may assume that $Q\in\pi\setminus\Pi_{n-1}$. Note that every line through $P$ containing at least two holes of $\Pi_{n-1}$ lies in $H_{n-2}$.
		Therefore, there are at most $\theta_{n-3}$ such lines through $P$. Every plane through $l$ intersects $\Pi_{n-1}$ in a line through $P$, hence there must be at least $q^{n-2} - \theta_{n-3}$ planes through $t$ of type $\difference$ of $\twolines$, resulting in at least $q^{n-2} - \theta_{n-3}$ lines in $\Pi_{n-1}$, through $P$, each containing at least $q$ points all having the same non-zero value $\alpha$ in $c$. This yields at least
		\[
			(q^{n-2} - \theta_{n-3})(q-1) + 1>\frac{1}{2}\theta_{n-1}
		\]
		points in $\Pi_{n-1}$ with value $\alpha$.\\
		Now suppose, on the contrary, that $c$ is a code word of minimal weight such that $c$ cannot be written as a linear combination of at most two hyperplanes. Then $\wt{c-\alpha \Pi_{n-1}}<\wt{c}$, thus the code word $c-\alpha \Pi_{n-1}$ is a linear combination of exactly two hyperplanes. As a consequence, $c$ must be a linear combination of precisely three hyperplanes, implying that $\wt{c}\geqslant3(q^{n-1}-q^{n-2})$, contradicting the weight assumptions.
	\end{proof}
	
	\subsubsection{Going higher on the weight spectrum}
	
	It will turn out that we can go further than the code words of weight $2q^{n-1}+\theta_{n-2}$. Moreover, we will be able to prove that a code word of weight at most $B_{n,q}$ corresponds to a linear combination of hyperplanes through a fixed $(n-3)$-space (Theorem \ref{THETHEOREM}).
	
	\bigskip
	Due to Theorem \ref{TwoHyperplaneTheorem}, we can assume the following on the weight of the code word $c$:
	\[
		(3q-6)\theta_{n-2}+3\leqslant\wt{c}\leqslant B_{n,q}\textnormal{.}
	\]
	As we are mainly interested in the case $n\geqslant3$, the inequality above implies that $q\geqslant29$, which we will keep in mind for the remainder of this section.
		
	\begin{lemma}\label{Exist3Secant}
		Suppose $(3q-6)\theta_{n-2}+3\leqslant\wt{c}\leqslant B_{n,q}$. Then there exists a $3$-secant to $\supp{c}$.
	\end{lemma}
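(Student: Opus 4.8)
The plan is to argue by contradiction: assume $\wt{c}$ lies in the stated range but $\supp{c}$ has no $3$-secant. Combined with Lemma \ref{LinesAndPlanes}(1), which says every line is a short or long secant, this forces every line to meet $\supp{c}$ in $0$, $1$, $2$, $q-1$, $q$ or $q+1$ points. I would first dispose of the case where a $(q-1)$-secant exists: Lemma \ref{q-1Implies3} says a $(q-1)$-secant forces a $3$-secant, contradicting our assumption. Hence every line meets $\supp{c}$ in at most $2$ or in at least $q$ points, and we are exactly in the situation of Lemma \ref{ClassificationOf012QQ+1Secants} applied to $S=\supp{c}$.

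By Lemma \ref{ClassificationOf012QQ+1Secants}, either $\wt{c}\leqslant 2q^{n-1}+\theta_{n-2}$, or the complement $\supp{c}^c$ lies in a hyperplane. In the first case $\wt{c}\leqslant 2q^{n-1}+\theta_{n-2}\leqslant(3q-6)\theta_{n-2}+2$ for $q\geqslant 29$ (indeed $2q^{n-1}+\theta_{n-2}=2q\,\theta_{n-2}-\theta_{n-3}\cdot(\text{something})$, and the linear-in-$q$ comparison is easily checked), so Theorem \ref{TwoHyperplaneTheorem} applies and $c$ is a linear combination of at most two hyperplanes; but then $\wt{c}\in\{0,\theta_{n-1},2q^{n-1}\}$, none of which reaches $(3q-6)\theta_{n-2}+3$, a contradiction. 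In the second case, all holes lie in a single hyperplane $\Pi$, so $\supp{c}\supseteq\mathcal{P}(n,q)\setminus\Pi$, giving $\wt{c}\geqslant\theta_n-\theta_{n-1}=q^n$, which wildly exceeds $B_{n,q}$ — a contradiction.

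An alternative, more self-contained route (in case one wishes to avoid invoking Lemma \ref{ClassificationOf012QQ+1Secants} here) is a direct double count: with no $3$-secant, every plane is of type $T_0$, $T_{q+1}$, $T_{2q}$, $T_{2q+1}$ (by Corollary \ref{THETHEOREMCOR} and the characterisation of planar code words, these are the only planar types with no $3$-secant and no $(q-1)$-secant issue), and one can bound $\wt{c}$ through a fixed $2$-secant or $q$-secant exactly as in the proofs of Lemmas \ref{Exists2Secant} and \ref{Lines2AndQ}. The main obstacle is the bookkeeping in the first case: one must confirm that the bound $2q^{n-1}+\theta_{n-2}$ coming from Lemma \ref{ClassificationOf012QQ+1Secants} actually falls below $\min\{(3q-6)\theta_{n-2}+2,\,B_{n,q}\}$ so that Theorem \ref{TwoHyperplaneTheorem} is legitimately applicable; this is a routine but necessary inequality check using $q\geqslant 29$ and $n\geqslant 3$, and it is the only place where the numerical value of the weight bounds is used.
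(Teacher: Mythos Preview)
Your main argument is correct and follows the paper's approach exactly: assume no $3$-secant, use Lemmas \ref{LinesAndPlanes} and \ref{q-1Implies3} to force all lines to be $\leqslant 2$- or $\geqslant q$-secants, then apply Lemma \ref{ClassificationOf012QQ+1Secants}. However, you take an unnecessary detour in Case~1. Once you have $\wt{c}\leqslant 2q^{n-1}+\theta_{n-2}\leqslant (3q-6)\theta_{n-2}+2$, this already contradicts the hypothesis $\wt{c}\geqslant (3q-6)\theta_{n-2}+3$ immediately; there is no need to invoke Theorem \ref{TwoHyperplaneTheorem}. (Incidentally, your list $\{0,\theta_{n-1},2q^{n-1}\}$ of possible weights for a combination of at most two hyperplanes omits $2q^{n-1}+\theta_{n-2}$, which occurs when the two scalar coefficients do not sum to zero --- harmless here, but worth noting.) The paper's proof is simply: either $\wt{c}\leqslant 2q^{n-1}+\theta_{n-2}$ or $\wt{c}\geqslant q^n$, and both contradict the assumed range.

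Your ``alternative route'' is problematic: it cites Corollary \ref{THETHEOREMCOR}, which is a consequence of the main Theorem \ref{THETHEOREM}, whose proof relies on the present lemma. That would be circular, so this alternative should be dropped.
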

	\begin{proof}
		Suppose that there does not exist a $3$-secant to $\supp{c}$. By Lemma \ref{LinesAndPlanes} and Lemma \ref{q-1Implies3}, all lines intersect $\supp{c}$ in at most $2$ or in at least $q$ points. Applying Lemma \ref{ClassificationOf012QQ+1Secants}, we obtain that $\wt{c}\leqslant2q^{n-1}+\theta_{n-2}$ or $\wt{c}\geqslant q^n$, contradicting our weight assumptions.
	\end{proof}

	\begin{lemma}\label{PlanesThrough3Secant}
		Suppose $(3q-6)\theta_{n-2}+3\leqslant\wt{c} \leqslant B_{n,q}$. Then all planes containing a $3$-secant are planes of type $\boldsymbol{\mathcal{T}}$.
	\end{lemma}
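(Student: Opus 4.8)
The plan is to rule out that a plane $\pi$ carrying a $3$-secant can be of type $\boldsymbol{\mathcal{O}}$; this suffices by Corollary \ref{PlaneResults}, which applies in our situation (the hypothesis forces $q\geqslant29$, and for the remaining excluded values $q\in\{49,121\}$ the admissible weight interval is empty) and forces every plane whose restriction has weight at most $4q-22$ to be a linear combination of at most three lines or a code word of Example \ref{OddCodewordGen}, hence of type $\boldsymbol{\mathcal{T}}$. Since any plane of type $\boldsymbol{\mathcal{T}}$ has restricted weight at most $3q+1<4q-22$, it is enough to derive a contradiction from the assumption that $\pi$ is of type $\boldsymbol{\mathcal{O}}$ and contains a $3$-secant $s$; write $s\cap\supp{c}=\{P_1,P_2,P_3\}$.

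The crucial ingredient is a sharpening of Lemma \ref{LinesAndPlanes}(2): every plane $\pi$ of type $\boldsymbol{\mathcal{O}}$ satisfies $\wt{\restr{c}{\pi}}\geqslant q^2-4q$. Indeed, set $S=\supp{\restr{c}{\pi}}$, $w=|S|$, and let $L$ be the number of long lines in $\pi$; by Proposition \ref{LinesIncharacterisedPlanes} and Lemma \ref{LinesAndPlanes}(1) every line of $\pi$ meets $S$ in at most $3$ or in at least $q-1$ points. Counting point–line incidences gives $w(q+1)\leqslant L(q-2)+3(q^2+q+1)$, and counting collinear pairs of points of $S$ gives $\binom{w}{2}\geqslant L\binom{q-1}{2}$. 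Eliminating $L$ leaves $w^2-q^2w+3q^3-3\geqslant0$, so $w$ avoids the open interval between the roots of $X^2-q^2X+3q^3-3$; as Lemma \ref{LinesAndPlanes}(2) already gives $w\geqslant\tfrac12 q(q-1)$, which for $q\geqslant29$ exceeds the smaller root, we conclude $w\geqslant\tfrac12\big(q^2+\sqrt{q^4-12q^3+12}\big)\geqslant q^2-4q$.

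For $n=3$ this closes the argument. Counting $\supp{c}$ through the $\theta_{n-2}$ planes containing $s$, and using that a plane through $s$ of type $\boldsymbol{\mathcal{T}}$ contains the $3$-secant $s$ (so it is of type $\nonconcurrent$, $\concurrent$ or $\odd$ and has restricted weight at least $3q-3$), while $\pi$ contributes at least $q^2-4q$, we obtain
\[
	\wt{c}\;\geqslant\;\wt{\restr{c}{\pi}}+(\theta_{n-2}-1)(3q-3)-3(\theta_{n-2}-1)\;\geqslant\;(q^2-4q)+(\theta_{n-2}-1)(3q-6)\textnormal{,}
\]
which for $n=3$ equals $4q^2-10q$, exceeding $B_{3,q}$ for every relevant $q$, a contradiction.

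For $n\geqslant4$ the plane count no longer suffices, and one proceeds by induction on $n$, invoking Theorem \ref{THETHEOREM} and Corollary \ref{THETHEOREMCOR} in dimension $n-1$ together with the case just settled. If some hyperplane $\Pi\supseteq\pi$ has $\wt{\restr{c}{\Pi}}\leqslant B_{n-1,q}$, then $\restr{c}{\Pi}$ is a code word of type $\boldsymbol{\mathcal{T}}$ in $C_{n-2}(n-1,q)$, i.e. a cone with some vertex space $\kappa$ over a type-$\boldsymbol{\mathcal{T}}$ plane; unravelling Definition \ref{DefHyperplane} one checks that a plane $\pi\subseteq\Pi$ carrying a $3$-secant is then itself of type $\boldsymbol{\mathcal{T}}$ — the only alternative, $\restr{c}{\pi}$ being a cone over a long line of the base plane, has no $3$-secant — contradicting our assumption. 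The main obstacle is the remaining case, in which \emph{every} hyperplane through $\pi$ has restricted weight exceeding $B_{n-1,q}$: naive averaging over these hyperplanes is defeated precisely because $\wt{\restr{c}{\pi}}$ is as large as $\approx q^2$. One must turn this largeness to advantage: $\wt{\restr{c}{\pi}}\geqslant q^2-4q$ forces $\supp{\restr{c}{\pi}}$ to be, up to $O(q)$ points, the complement of at most two lines of $\pi$, and — using that $\restr{c}{\pi}$ is a genuine code word of $C_1(2,q)$, so by Lemma \ref{SumOfCoefficients} every line of $\pi$ meets it with the same value-sum while no code word of $C_1(2,q)$ has weight in $\{1,2,3\}$ — one shows that such a near-complement of two lines cannot contain a $3$-secant at all. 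Executing this rigidity argument carefully, and verifying the closing inequalities uniformly in $n$ and $q$, is the technical heart of the proof; once established in full it settles all dimensions at once, so that the inductive step for $n\geqslant4$ becomes dispensable.
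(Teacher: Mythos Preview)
Your sharpening of Lemma~\ref{LinesAndPlanes}(2) to $\wt{\restr{c}{\pi}}\geqslant q^2-4q$ via the two incidence counts is correct, and for $n=3$ the resulting bound $\wt{c}\geqslant(q^2-4q)+q(3q-6)=4q^2-10q>B_{3,q}$ does the job. This is a genuinely different and somewhat slicker route than the paper's for $n=3$: the paper never sharpens the $\tfrac12 q(q-1)$ bound but instead works inside a single solid $\Sigma\supseteq\pi$, passes to a second $3$-secant $t'$ in a neighbouring $\boldsymbol{\mathcal{T}}$-plane with $t\cap t'\notin\supp c$, and shows that at least $\tfrac12(q-5)$ of the planes in $\Sigma$ through $t'$ are of type $\boldsymbol{\mathcal{O}}$, forcing $\wt{\restr c\Sigma}\geqslant\tfrac14 q^3$.

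For $n\geqslant4$, however, your argument has a genuine gap. You yourself identify the obstruction --- the plane count gives only $\wt c\gtrsim 3q^{n-1}$, which is below $B_{n,q}$ --- and you propose two remedies, neither of which is carried through. The inductive appeal to Theorem~\ref{THETHEOREM} and Corollary~\ref{THETHEOREMCOR} in dimension $n-1$ is structurally problematic: in the paper those results depend on the present lemma (via Lemma~\ref{SpacesThrough3Secant}), so invoking them here requires reorganising the whole paper as a simultaneous induction on $n$, which you do not do. More seriously, your fallback ``rigidity argument'' --- that a code word in $C_1(2,q)$ of weight at least $q^2-4q$ cannot admit a $3$-secant --- is asserted but not proved; you explicitly call its execution ``the technical heart of the proof'' and leave it open. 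As stated it is not obvious: the complement of $\supp{\restr c\pi}$ has at most $5q+1$ points and meets every line in $\leqslant2$ or $\geqslant q-2$ points, but nothing in your sketch rules out a single $(q-2)$-secant to the complement, and the appeal to Lemma~\ref{SumOfCoefficients} and the nonexistence of weight-$\leqslant3$ code words does not by itself close this.

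The paper avoids this difficulty entirely: the solid bound $\wt{\restr c\Sigma}\geqslant\tfrac14 q^3$ holds for \emph{every} solid $\Sigma\supseteq\pi$, and summing over the $\theta_{n-3}$ solids through $\pi$ gives $\wt c\geqslant\theta_{n-3}\cdot\tfrac14 q^3-(\theta_{n-3}-1)\theta_2>B_{n,q}$ uniformly in $n$, with no induction and no structural analysis of $\restr c\pi$ required.
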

	\begin{proof}
		Suppose that $\sigma$ is a plane of type $\boldsymbol{\mathcal{O}}$ containing a $3$-secant $t$ and suppose that $\Sigma$ is a solid containing $\sigma$. We claim that $\wt{\restr{c}{\Sigma}}\geqslant \frac{1}{4}q^3$.\\
		In the first case, suppose that all planes in $\Sigma$ through $t$ are planes of type $\boldsymbol{\mathcal{O}}$. By Lemma \ref{LinesAndPlanes},
		\begin{align*}
			\wt{\restr c \Sigma}&\geqslant\Big(\frac{1}{2}q^2-\frac{1}{2}q-3\Big)(q+1)+3\\
			&=\frac{1}{2}q^3-\frac{7}{2}q\geqslant\frac{1}{4}q^3\textnormal{,}
		\end{align*}
		the last inequality being valid whenever $q>3$.\\
		In the second case, suppose there exists a plane $\pi$ of type $\boldsymbol{\mathcal{T}}$ in $\Sigma$ through $t$. By Corollary \ref{PlaneResults}, as $\pi$ contains a $3$-secant, $\pi$ is either a plane of type $\odd$, type $\nonconcurrent$ or type $\concurrent$. Regardless of this type, $\pi$ always contains another $3$-secant $t'$ such that $t\cap t'\notin\supp{c}$.\\
		Let $y$ be the number of type-$\boldsymbol{\mathcal{T}}$ planes in $\Sigma$ through $t'$. Remark that such a plane intersects $\sigma$ in at most three points of $\supp{c}$. Indeed, should a $\boldsymbol{\mathcal T}$-typed plane in $\Sigma$ through $t'$ intersect $\sigma$ in at least $4$, thus in at least $q-1$ points (Lemma \ref{LinesAndPlanes}), then one of the three points of $t'\cap\supp{c}$ must lie on this intersection line (as $\pi$ is a plane of type $\boldsymbol{\mathcal{T}}$). But then $t'\cap\sigma\in\supp{c}$, in contradiction with $t\cap t'\notin\supp{c}$. In this way, we get
		\begin{align*}
			\frac{1}{2}q(q-1)\leqslant\wt{\sigma}&\leqslant y\cdot3+(q+1-y)q\\
			&=q^2+q-y(q-3)\textnormal{,}
		\end{align*}
		which implies $y\leqslant\frac{1}{2}(q+7)$, as $q\geqslant29$.\\
		Thus we get that $t'$ is contained in at least $q+1-\frac{1}{2}(q+7)=\frac{1}{2}(q-5)$ planes of type $\boldsymbol{\mathcal{O}}$ (all lying in $\Sigma$). As each $\boldsymbol{\mathcal{T}}$-typed plane in $\Sigma$ through $t'$ contains at least $3q-3$ points of $\supp{c}$, we get
		\begin{align*}
			\wt{\restr c \sigma}&\geqslant\bigg\lceil\frac{1}{2}(q-5)\bigg\rceil\cdot\Big(\frac{1}{2}q(q-1)-3\Big)+\bigg\lfloor\frac{1}{2}(q+7)\bigg\rfloor\cdot(3q-3-3)+3\\
			&\geqslant\Big(\frac{1}{2}(q-5)\Big)\cdot\Big(\frac{1}{2}q(q-1)-3\Big)+\Big(\frac{1}{2}(q+6)\Big)\cdot(3q-3-3)+3\\
			&=\frac{1}{4}q^3+\frac{23}{4}q-\frac{15}{2}\geqslant\frac{1}{4}q^3\textnormal{.}
		\end{align*}
		As the above claim holds for all solids containing $\sigma$, we get
		\[
			\wt{c}\geqslant\theta_{n-3}\Big(\frac{1}{4}q^3\Big)-(\theta_{n-3}-1)(q^2+q+1)\textnormal{.}
		\]
		One can easily check this implies $\wt{c}\geqslant B_{n,q}$ for all prime powers $q$, a contradiction.
	\end{proof}
	
	We can generalise the above lemma, which will prove its usefulness when using induction.
	
	\begin{lemma}\label{SpacesThrough3Secant}
		Suppose $(3q-6)\theta_{n-2}+3\leqslant\wt{c}\leqslant B_{n,q}$. Let $\psi$ be a $k$-space, $2\leqslant k<n$, containing a $3$-secant $s$. Then $\wt{\restr{c}{\psi}}\leqslant B_{k,q}$.
	\end{lemma}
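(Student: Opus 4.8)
The plan is to bound $\wt{\restr{c}{\psi}}$ from above by covering $\psi$ with the planes through the given $3$-secant and applying Lemma~\ref{PlanesThrough3Secant} to each of them. Fix a $3$-secant $s\subseteq\psi$, which exists by assumption. Any plane $\pi$ of $\psi$ with $s\subseteq\pi$ contains a $3$-secant, so by Lemma~\ref{PlanesThrough3Secant} it is a plane of type $\boldsymbol{\mathcal{T}}$; and since, by Proposition~\ref{LinesIncharacterisedPlanes}, in a plane of type $\zero$, $\codline$, $\difference$ or $\twolines$ every line meets $\supp{c}$ in at most two or in at least $q$ points, $\pi$ must in fact be of type $\odd$, $\nonconcurrent$ or $\concurrent$. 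In each of these three cases $\restr{c}{\pi}$ is supported on a union of three lines, hence $\wt{\restr{c}{\pi}}\leqslant 3q+1$ (this being Proposition~\ref{PropOddCodeword} for type $\odd$).

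I would then use that the $\theta_{k-2}$ planes of $\psi$ through $s$ partition $\psi\setminus s$, every point of $\psi\setminus s$ lying on exactly one of them. Counting the points of $\supp{c}$ in $\psi$ plane by plane and adding the three points of $s\cap\supp{c}$,
\[
	\wt{\restr{c}{\psi}}=3+\sum_{\pi\supseteq s}\Big(\wt{\restr{c}{\pi}}-3\Big)\leqslant 3+\theta_{k-2}(3q-2)=\frac{(3q-2)q^{k-1}-1}{q-1}\textnormal{.}
\]

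It then remains to check that this last quantity is at most $B_{k,q}$. Since $2\leqslant k<n$ forces $n\geqslant3$, the running hypothesis $(3q-6)\theta_{n-2}+3\leqslant\wt{c}\leqslant B_{n,q}$ of this subsection can be satisfied only when $q\geqslant37$ and $q\notin\{49,121\}$; for those prime powers $B_{m,q}=\big(4q-\sqrt{8q}-\tfrac{33}{2}\big)q^{m-2}$ for every $m\geqslant2$, and clearing denominators shows that $\frac{(3q-2)q^{k-1}-1}{q-1}\leqslant B_{k,q}$ is implied by $4q-\sqrt{8q}-\tfrac{33}{2}\geqslant 3q+1+\tfrac{1}{q-1}$, i.e.\ by $q-\sqrt{8q}\geqslant\tfrac{35}{2}+\tfrac{1}{q-1}$, which holds for every $q\geqslant37$. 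This finishes the proof.

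The delicate point is precisely this last step: one must make sure that the prime powers for which a plane through a $3$-secant could be as heavy as $3q+1$ with $3q+1>B_{k,q}$ — essentially $q\leqslant32$ (the running assumption already forces $q\geqslant29$), together with $q\in\{49,121\}$ — are genuinely excluded because the weight interval in the hypothesis is then empty. The covering identity and the classification of the planes through $s$ via Lemma~\ref{PlanesThrough3Secant} are routine.
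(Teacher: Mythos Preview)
Your argument is exactly the paper's: cover $\psi$ by the $\theta_{k-2}$ planes through $s$, invoke Lemma~\ref{PlanesThrough3Secant} to bound each by $3q+1$, and obtain $\wt{\restr{c}{\psi}}\leqslant(3q-2)\theta_{k-2}+3\leqslant B_{k,q}$. Your extra step---observing that the hypothesis interval is in fact empty for $q\in\{29,31,32,121\}$, so that only the generic formula $B_{m,q}=(4q-\sqrt{8q}-\tfrac{33}{2})q^{m-2}$ is ever in play---is correct and makes the final inequality transparent; the paper simply asserts that inequality (with a remark that it is the reason for the special $B_{n,q}$ at $q\in\{29,31,32\}$), whereas you state the emptiness of the interval as a claim and, rightly, flag it as the one numerical check that needs to be carried out.
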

	\begin{proof}
		By Lemma \ref{PlanesThrough3Secant}, we know that all planes in $\psi$ through $t$ contain at most $3q+1$ points of $\supp{c}$ (Corollary \ref{PlaneResults}). This implies that $\wt{\restr c \psi}\leqslant\theta_{k-2}(3q+1-3)+3\leqslant B_{k,q}$.
	\end{proof}
	
	Remark that the last inequality in the above proof is the reason why the bound $B_{n,q}$ differs in value for $q\in\{29,31,32\}$.
    
    \bigskip
	We can now present some properties about certain types of subspaces sharing a common $3$-secant.
	
	\begin{lemma}\label{OneTypeCat}
		Suppose $(3q-6)\theta_{n-2}+3\leqslant\wt{c}\leqslant B_{n,q}$. Let $\Pi_1$ and $\Pi_2$ be two $k$-spaces, $2\leqslant k<n$, of type $T_1,T_2\in\{\odd,\nonconcurrent,\concurrent\}$, respectively, having a $3$-secant $s$ in common. Then at least one of the following holds:
		\begin{enumerate}
		    \item[\textnormal{(1)}] $T_1=\concurrent$.
		    \item[\textnormal{(2)}] $T_2=\concurrent$.
		    \item[\textnormal{(3)}] $T_1=T_2$.
		\end{enumerate}
		Furthermore, if $T_1=T_2$, then $\wt{\restr{c}{\Pi_1}}=\wt{\restr{c}{\Pi_2}}$.
	\end{lemma}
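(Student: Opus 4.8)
The plan is to first translate the cone description of Definition~\ref{DefHyperplane} into a picture of how a $3$-secant $s$, with $s\cap\supp{c}=\{P_1,P_2,P_3\}$, can sit inside a space $\Pi$ of type $T\in\{\odd,\nonconcurrent,\concurrent\}$ with base plane $\pi$ and vertex $(k-3)$-space $\kappa$. First, $s$ is disjoint from $\kappa$: if $s$ met $\kappa$, every point of $s$ off $\kappa$ would project from $\kappa$ to the same point of $\pi$, hence carry one value of $c$, and $s$ could not be a $3$-secant. Thus $s$ projects onto a $3$-secant $\bar s$ of $\pi$ meeting the three lines covering $\supp{\restr{c}{\pi}}$ in distinct points $\bar P_1,\bar P_2,\bar P_3$, and inside $\Pi$ the long secants to $\supp{c}$ through $P_i$ are exactly the lines through $P_i$ lying in the hyperplane $\Lambda_i:=\spann{\kappa}{(i\text{-th of those three lines})}$. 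The triple $\Lambda_1,\Lambda_2,\Lambda_3$ shares a common $(k-2)$-space when $T\in\{\odd,\concurrent\}$ and pairwise meets in three distinct $(k-2)$-spaces when $T=\nonconcurrent$ — a first separation of the types. A second one is the value profile along a long secant through a $P_i$: for $T=\concurrent$ it is constant off one exceptional point, for $T=\nonconcurrent$ off at most two, whereas for $T=\odd$ (which forces $q=p$) a long secant of $\Pi$ that projects onto one of the three lines carries \emph{all} $q$ values of $\mathbb{F}_q$ (visible from Example~\ref{OddCodewordGen}). Call such a secant \emph{rich} and a secant of the first kind \emph{poor}; since $q\geqslant7$, a rich secant repeats no value more than twice while a poor one repeats a value at least $q-3>2$ times, so no line is both.

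For the main assertion we may assume $T_1\neq\concurrent\neq T_2$, so $T_1,T_2\in\{\odd,\nonconcurrent\}$, and it remains to exclude $T_1=\odd$, $T_2=\nonconcurrent$ (so $q=p$). Choose a plane $\pi_1\subseteq\Pi_1$ through $s$ disjoint from the vertex of $\Pi_1$ (it projects isomorphically onto the base plane, so $\restr{c}{\pi_1}$ has type $\odd$) and, similarly, a plane $\pi_2\subseteq\Pi_2$ through $s$ with $\restr{c}{\pi_2}$ of type $\nonconcurrent$; then $\pi_1\neq\pi_2$ and $\pi_1\cap\pi_2=s$. Let $m\subseteq\pi_1$, resp.\ $\ell\subseteq\pi_2$, be the one through $P_1$ among the three lines covering the support in that plane; then $m\neq\ell$ are distinct long secants through $P_1$, $c|_m$ is rich, $c|_\ell$ is poor, and $m,\ell$ span a plane $\tau$ inside the solid $W:=\spann{\pi_1}{\pi_2}$. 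A plane of type $\odd$, $\nonconcurrent$ or $\concurrent$ has a unique long secant through $P_1$, so $\tau$, which has the two distinct long secants $m\neq\ell$, satisfies $s\not\subseteq\tau$. Now $\tau$ is not a linear combination of at most three lines (on any long secant of such a plane $c$ takes at most three values, impossible for the rich $m$) and is not of type $\odd$ (then the long secant $\ell$ of $\tau$ would be rich); hence $\tau$ has type $\boldsymbol{\mathcal{O}}$, so by Lemma~\ref{PlanesThrough3Secant} it contains no $3$-secant. But then, fixing a point $R'\in\ell$ with $c(R')=c(P_1)$ and joining it to each of the $\geqslant q-1$ support points of $m$, at most one of the resulting lines can be a $2$-secant (its line-sum would repeat a value on the rich secant $m$), and with no $3$-secant available in $\tau$ the others are long secants; letting $R'$ run over the $\geqslant q-3$ points of $\ell$ of value $c(P_1)$ forces $\tau$ to contain close to $q^2$ long secants, which — balanced against $\wt{\restr{c}{W}}\leqslant B_{3,q}$ from Lemma~\ref{SpacesThrough3Secant} and Lemma~\ref{LinesAndPlanes} — is the contradiction sought.

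It remains to prove the ``furthermore''. By Lemma~\ref{SumOfCoefficients} the quantity $\ell\cdot c=\sum_i\alpha_i=:\sigma$ is the same for every line $\ell$, so $c(P_1)+c(P_2)+c(P_3)=\sigma$; applying Lemma~\ref{SumOfCoefficients} to a $(k-2)$-space spanned by $\kappa$ and a support point of the base plane, and using $q\equiv0\pmod p$ and $\theta_m\equiv1\pmod p$ for $m\geqslant0$, the scalar by which $c$ is constant on $\kappa$ equals $\sigma$ as well. Consequently $\wt{\restr{c}{\Pi}}=q^{k-2}\wt{\restr{c}{\pi}}+\theta_{k-3}$ when $\sigma\neq0$ and $q^{k-2}\wt{\restr{c}{\pi}}$ otherwise, for a space of any of our three types, so it suffices to check $\wt{\restr{c}{\pi_1}}=\wt{\restr{c}{\pi_2}}$ when $T_1=T_2$. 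But for each type $\wt{\restr{c}{\pi}}$ depends only on the multiset $\{c(P_1),c(P_2),c(P_3)\}$: for $\odd$ this is Proposition~\ref{PropOddCodeword} with $d(S)=\sigma$; for $\nonconcurrent$, writing $\restr{c}{\pi}=\sum a_i\ell_i$ with $\{a_i\}=\{c(P_i)\}$, a vertex $\ell_i\cap\ell_j$ is a hole exactly when the third coefficient $a_k$ equals $\sigma$, so the weight is $3q$ minus the number of $i$ with $c(P_i)=\sigma$; and for $\concurrent$ the common point is a hole exactly when $\sigma=0$, giving weight $3q+[\sigma\neq0]$. Since $\Pi_1$ and $\Pi_2$ share $s$, the multiset is the same for both, and the weights agree.

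The step I expect to be the real obstacle is the last one of the second paragraph: ruling out that the auxiliary plane $\tau$ has type $\boldsymbol{\mathcal{O}}$ — equivalently, showing that a planar code word carrying a rich long secant must contain a $3$-secant (whence it has type $\boldsymbol{\mathcal{T}}$, and richness then forces type $\odd$, which clashes with the poor secant $\ell\subseteq\tau$). The count of forced long secants above lands right at the edge of what the weight estimates permit, and this is exactly where the exceptional values of $q$ in the definition of $B_{n,q}$, and the sharp inequalities of Lemma~\ref{SpacesThrough3Secant} and Lemma~\ref{LinesAndPlanes}, have to be pushed; the remainder of the argument is bookkeeping with the cone structure of Definition~\ref{DefHyperplane} and the line-invariant $\sigma$.
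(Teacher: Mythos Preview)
Your ``rich versus poor'' dichotomy for long secants is exactly the right distinction, and your reduction to two planes $\pi_1,\pi_2$ through $s$ spanning a solid $W$ matches the paper. The treatment of the ``furthermore'' clause is also fine and essentially coincides with the paper's Case~2: the multiset $\{c(P_1),c(P_2),c(P_3)\}$ together with $\sigma$ determines the planar weight for each of the three types, and the cone formula lifts this to $\Pi_i$.

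The genuine gap is the one you yourself flag at the end of the second paragraph. Having shown that $\tau=\spann{m}{\ell}$ is of type $\boldsymbol{\mathcal{O}}$ and hence contains no $3$-secant, your count of forced long secants through the points $R'\in\ell$ with $c(R')=\alpha$ yields only
\[
\wt{\restr{c}{\tau}}\;\geqslant\;q^2-O(q),
\]
and no matter how sharply you run it, a single plane can never carry more than $\theta_2$ points. Since the only upper bound available in the solid is $\wt{\restr{c}{W}}\leqslant B_{3,q}\approx 4q^2$ from Lemma~\ref{SpacesThrough3Secant}, there is no contradiction to be extracted from one plane. Letting $R'$ vary does produce roughly $q^2$ long secants in $\tau$, but that again only says $\wt{\restr{c}{\tau}}\approx q^2$; it does not propagate to a lower bound on $\wt{\restr{c}{W}}$ of order $q^3$.

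The paper closes this gap by working with a \emph{pencil} of planes rather than a single one. It takes the poor long secant $l_\alpha^{(2)}\subseteq\pi_2$ and looks at all $q+1$ planes through it inside $\Sigma=W$. Using precisely your rich/poor observation, each such plane is immediately excluded from being of type $T_0,T_{q+1},T_{2q},T_{2q+1},\concurrent$ (poorness would force $\beta=0$ or $\gamma=0$) and from $\odd$ (three equal values on $l_\alpha^{(2)}$). The remaining possibility, $\nonconcurrent$, is then pinned down: if such a plane meets $\pi_1$ in a $3$-secant, the two points off $P_\alpha$ must carry the values $\beta$ and $\gamma$, and since $\pi_1$ is of type $\odd$ each of $\beta,\gamma$ occurs at most once on each of $l_\beta^{(1)},l_\gamma^{(1)}$, so at most two such planes exist; one more is killed because $\spann{l_\alpha^{(1)}}{l_\alpha^{(2)}}$ meets $\pi_1$ in the rich line $l_\alpha^{(1)}$. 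Hence at least $q-3$ planes through $l_\alpha^{(2)}$ are of type $\boldsymbol{\mathcal{O}}$, each contributing $\geqslant\tfrac12q(q-1)$ points by Lemma~\ref{LinesAndPlanes}, and summing over the pencil gives
\[
\wt{\restr{c}{\Sigma}}\;\geqslant\;\tfrac12q^3-3q^2+\tfrac{25}{2}q-12\;>\;B_{3,q},
\]
contradicting Lemma~\ref{SpacesThrough3Secant}. So the missing idea is: do not try to squeeze the contradiction out of the single plane $\tau$, but instead sweep the poor secant $\ell$ through the solid and show that almost every plane of the pencil is of type~$\boldsymbol{\mathcal{O}}$.
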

	\begin{proof}
	    In each subspace $\Pi_i$, choose a plane $\pi_i$ through $s$, disjoint to the vertex corresponding with the cone $\supp{\restr{c}{\Pi_i}}$. By definition, $\pi_i$ is a plane of type $T_i$. Define $\Sigma=\spann{\pi_1}{\pi_2}$.\\
		Furthermore, let $P_\alpha$, $P_\beta$ and $P_\gamma$ be the points in $s\cap\supp{c}$ with corresponding non-zero values $\alpha$, $\beta$ and $\gamma$ in $c$. Let $l_\alpha^{(i)}$, $l_\beta^{(i)}$ and $l_\gamma^{(i)}$ be the unique long secants in $\pi_i$ through $P_\alpha$, $P_\beta$ and $P_\gamma$, respectively ($i=1,2$).\\
		
		\underline{Case $1$: $T_1=\odd$ and $T_2=\nonconcurrent$.}
		
		Suppose that $\pi$ is a plane in $\Sigma$ going through $l_\alpha^{(2)}$. Remark that $l_\alpha^{(2)}$ is a long secant, containing $q-1$ points having non-zero value $\alpha$, one point having value $\alpha+\beta$ and one point having value $\alpha+\gamma$. From this, we know that the plane $\pi$ \emph{cannot} be
		\begin{itemize}
			\item a plane of type $\zero$, as $\alpha\neq0$.
			\item a plane of type $\codline$, $\difference$, $\twolines$ or $\concurrent$, else $\alpha+\beta=\alpha$ or $\alpha+\gamma=\alpha$.
			\item a plane of type $\odd$, as $l_\alpha^{(2)}$ contains at least three points with the same value $\alpha$.
			\item a plane of type $\nonconcurrent$, unless $\wt{\restr{c}{\pi}}=\wt{\restr{c}{\pi_2}}$. Indeed, $l_\alpha^{(2)}$ contains two points $l_\alpha^{(2)}\cap l_\beta^{(2)}$ and $l_\alpha^{(2)}\cap l_\gamma^{(2)}$ with corresponding values $\alpha+\beta$ and $\alpha+\gamma$, respectively, unambiguously fixing the weight of $\wt{\restr{c}{\pi}}$.
		\end{itemize}
		However, $\pi$ can only be a plane of type $\nonconcurrent$ in some cases. Suppose that $\pi$ is a plane of type $\nonconcurrent$ and suppose that $\pi$ intersects $\pi_1$ in a $3$-secant $t$. One of the points of $t\cap\supp{c}$ is obviously $P_\alpha$, as this point belongs to both $l_\alpha^{(2)}$ and $\pi_1$. The other two points of $t\cap\supp{c}$ lie on $l_\beta^{(1)}$ and $l_\gamma^{(1)}$ and must have corresponding values $\beta$ and $\gamma$, as $\wt{\restr{c}{\pi}}=\wt{\restr{c}{\pi_2}}$. As $\pi_1$ is a plane of type $\odd$, there are only two possibilities for $\pi$ to intersect $\pi_1$, namely when the $\beta$-valued point of $t$ lies on $l_\beta^{(1)}$ (then $\pi=\pi_2$), or when the $\beta$-valued point of $t$ lies on $l_\gamma^{(1)}$. Conclusion: of the at least $q-2$ planes through $l_\alpha^{(2)}$ in $\Sigma$, intersecting $\pi_1$ in a $3$-secant, at least $q-4$ of them cannot be a plane of type $\nonconcurrent$, and thus must be planes of type $\boldsymbol{\mathcal{O}}$. In addition, the plane $\spann{l_\alpha^{(1)}}{l_\alpha^{(2)}}$ can never be a plane of type $\nonconcurrent$ as well, as $l_\alpha^{(1)}$ contains many distinctly valued points. Thus, we find at least $q-3$ planes of type $\boldsymbol{\mathcal{O}}$ in $\Sigma$ through $l_\alpha^{(2)}$, each containing at least $\frac{1}{2}q(q-1)$ points of $\supp{c}$ (Lemma \ref{LinesAndPlanes}). The other planes in $\Sigma$ through $l_\alpha^{(2)}$, of which there are at most four, contain at least $3q-3$ points of $\supp{c}$. We get
		\begin{align*}
			\wt{\restr{c}{\Sigma}}&\geqslant\Big(\frac{1}{2}q(q-1)\Big)(q-3)+4\cdot(3q-3)-q\cdot(q+1)\\
			&=\frac{1}{2}q^3-3q^2+\frac{25}{2}q-12>B_{3,q}\textnormal{,}
		\end{align*}
		which is, if $n=3$, a direct contradiction or, if $n>3$, a contradiction with Lemma \ref{SpacesThrough3Secant}, as $\Sigma$ contains the $3$-secant $s$.\\
		
		\underline{Case $2$: $T_1=T_2$.}
		
		Suppose, on the contrary, that $\wt{\restr{c}{\Pi_1}}\neq\wt{\restr{c}{\Pi_2}}$. W.l.o.g. we can assume that $\wt{\restr{c}{\pi_1}}\neq\wt{\restr{c}{\pi_2}}$ as well. Assume, in the first case, that $T_1\in\{\odd,\concurrent\}$. By observing the types of these planes and by Proposition \ref{PropOddCodeword}, $\wt{\restr{c}{\pi_1}}\neq\wt{\restr{c}{\pi_2}}$ implies that both $\alpha+\beta+\gamma=0$ and $\alpha+\beta+\gamma\neq0$, a contradiction.\\
	    Now assume $T_1=\nonconcurrent$. Considering the plane $\pi_i$, we know that the lines $l_\alpha^{(i)}$, $l_\beta^{(i)}$ and $l_\gamma^{(i)}$ are not concurrent. As $\wt{\restr{c}{\pi_1}}\neq\wt{\restr{c}{\pi_2}}$, we know, without loss of generality, that the value of the point $l_\alpha^{(1)}\cap l_\beta^{(1)}$ is zero, while the value of the point $l_\alpha^{(2)}\cap l_\beta^{(2)}$ is not zero. This implies that $\alpha+\beta$ is both zero and non-zero, a contradiction.
	\end{proof}

	\begin{lemma}\label{UniqueLongSecants}
		Suppose $q>3$ and let $\pi$ be a plane of type $T\in\{\odd,\nonconcurrent\}$. Then all planes $\sigma$ of type $\mathcal{T}$ intersecting $\pi$ in a long secant are planes of type $T$ as well. Moreover, $\wt{\restr{c}{\sigma}}=\wt{\restr{c}{\pi}}$.
	\end{lemma}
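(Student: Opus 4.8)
The plan is to use that $\ell := \sigma\cap\pi$, being a long secant, is forced to be one of the (at most three) lines covering $\supp{c}$ inside each of $\pi$ and $\sigma$, and that the single restriction $\restr{c}{\ell}$ already determines both the type and the weight of $\restr{c}{\sigma}$.

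First I would note that, by Definition~\ref{DefPlane}, a plane of type $\odd$, $\nonconcurrent$ or $\concurrent$ has its support covered by three lines (concurrent, non-concurrent, concurrent, respectively), which I will call its \emph{component lines}, while a plane of type $\codline$, $\difference$ or $\twolines$ has its support covered by at most two such lines. A line of the plane that is not a component line meets the support in at most three points; since $\ell$ is long, this forces $\ell$ to be a component line of $\pi$ (and, once $\sigma$ is shown to have one of these types, of $\sigma$). Next, I would record the profile of $\restr{c}{\ell}$. If $\pi$ has type $\nonconcurrent$, say $\restr{c}{\pi}=\lambda_1 m_1+\lambda_2 m_2+\lambda_3 m_3$ with all $\lambda_i\neq0$ and $\ell=m_1$, then $\restr{c}{\ell}$ takes the value $\lambda_1$ on the $q-1$ points of $\ell$ off $m_2\cup m_3$ and the values $\lambda_1+\lambda_2$, $\lambda_1+\lambda_3$ at $\ell\cap m_2$, $\ell\cap m_3$; as $\lambda_2,\lambda_3\neq0$, the value $\lambda_1$ occurs exactly $q-1$ times and no value occurs more often. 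If $\pi$ has type $\odd$ (so $q=p$ is prime), then on the component line $\ell$ the $q$ points other than the common vertex $S_\pi$ carry all $q$ values of $\mathbb{F}_p$ — hence $q-1$ of them have pairwise distinct nonzero values and exactly one is a hole — while $S_\pi$ carries the sum of the three coefficients; thus no value of $\restr{c}{\ell}$ has multiplicity larger than two, $\restr{c}{\ell}$ has one hole if $S_\pi\in\supp{c}$ and two holes otherwise, and by Proposition~\ref{PropOddCodeword} one has $\wt{\restr{c}{\pi}}=3q-1-h$, where $h$ is the number of holes of $\restr{c}{\ell}$.

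Now I would look at $\restr{c}{\ell}$ from the side of $\sigma$. On a component line of a plane of type $\codline$, $\difference$, $\twolines$ or $\concurrent$, the restricted code word is constant on at least $q$ points; but by the previous paragraph no value of $\restr{c}{\ell}$ occurs $q$ or more times, a contradiction. Also $\sigma$ cannot have type $\zero$, since $\ell\subseteq\sigma$ is a long secant. Hence $\sigma$ has type $\odd$ or $\nonconcurrent$. On a component line of a $\nonconcurrent$ plane some value has multiplicity $q-1\geq3$, whereas on a component line of an $\odd$ plane no value has multiplicity exceeding $2$; as the profile of $\restr{c}{\ell}$ is intrinsic (it does not depend on which of $\pi,\sigma$ we regard $\ell$ as lying in), this forces $\sigma$ to have the same type $T$ as $\pi$. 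Finally, rerunning the description of $\restr{c}{\ell}$ inside $\sigma$: if $T=\odd$ then $\wt{\restr{c}{\sigma}}=3q-1-h$ with the same $h$; if $T=\nonconcurrent$ then $\restr{c}{\ell}$ determines the coefficient $\lambda_1$ of $\ell$ (the unique value of multiplicity $q-1$) and, after subtracting it off, the unordered pair $\{\lambda_2,\lambda_3\}$, hence all three pairwise sums $\lambda_i+\lambda_j$, hence $\wt{\restr{c}{\pi}}=3q-3+\#\{i<j:\lambda_i+\lambda_j\neq0\}$; computing the same expression inside $\sigma$ gives the same number. In both cases $\wt{\restr{c}{\sigma}}=\wt{\restr{c}{\pi}}$, and then the cone structure of Definition~\ref{DefHyperplane} is not needed since $k=2$.

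The delicate point is the last computation, in the $\nonconcurrent$ case: the vertex $m_2\cap m_3$ does not lie on $\ell$, so it is not obvious that its value — hence the exact weight — is forced by $\restr{c}{\ell}$ alone. It is, because $\restr{c}{\ell}$ pins down all three coefficients $\lambda_i$ up to the irrelevant swap $\lambda_2\leftrightarrow\lambda_3$; but the degenerate sub-cases, where $\lambda_1+\lambda_2=0$ or $\lambda_1+\lambda_3=0$ turns one (or both) of $\ell\cap m_2$, $\ell\cap m_3$ into a hole of $\restr{c}{\ell}$, have to be checked individually.
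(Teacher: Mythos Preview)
Your proof is correct and takes essentially the same approach as the paper: analyse the multiset of values of $\restr{c}{\ell}$ on the long secant $\ell=\pi\cap\sigma$, use it to rule out the types $\zero,\codline,\difference,\twolines,\concurrent$ for $\sigma$ (on a component line of any of those, some nonzero value repeats at least $q$ times), to distinguish $\odd$ from $\nonconcurrent$ (maximal multiplicity at most $2$ versus exactly $q-1$, equivalently at least $q$ pairwise distinct values versus at most $3$), and finally to pin down the weight. The paper compresses this last step into ``it is not hard to check that the set of values of points on $l$ fixes the weight of $\restr{c}{\sigma}$''; your explicit case analysis --- including the degenerate $\nonconcurrent$ sub-cases where $\lambda_1+\lambda_2=0$ or $\lambda_1+\lambda_3=0$ --- is a faithful unpacking of that remark.
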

	\begin{proof}
		Suppose the plane $\sigma$ is a plane of type $T_\sigma\in\mathcal{T}$; let $l$ be the long secant $\pi\cap\sigma$. As $T\in\{\odd,\nonconcurrent\}$, no $q$ points on $l$ have the same non-zero value in $c$. As a consequence, $T_\sigma\notin\{\zero,\codline,\difference,\twolines,\concurrent\}$. If $T=\odd$, we find at least $q$ points on $l$ having pairwise different values in $c$. If $T=\nonconcurrent$, we find at most $3$ different points on $l$ having pairwise different values. Hence, if $T_\sigma\neq T$, then $q\leqslant3$, a contradiction. Furthermore, it is not hard to check that the set of values of points on $l$ fixes the weight of $\restr{c}{\sigma}$.
	\end{proof}

	\begin{lemma}\label{NotAllCatBPlane}
		Suppose that $n=3$ and $3q^2-3q-3\leqslant\wt{c}\leqslant B_{3,q}$. Then a $3$-secant is never contained in $q+1$ planes of the same type $T\in\{\odd,\nonconcurrent\}$.
	\end{lemma}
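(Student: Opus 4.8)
Throughout write $\{P_\alpha,P_\beta,P_\gamma\}=s\cap\supp{c}$ with values $\alpha,\beta,\gamma\in\mathbb{F}_p\setminus\{0\}$, and assume for contradiction that all $q+1$ planes $\pi_0,\dots,\pi_q$ on $s$ are of type $T\in\{\odd,\nonconcurrent\}$. Since $s$ is a $3$-secant it passes through no triangle vertex (if $T=\nonconcurrent$) and through no common point of the three long secants (if $T=\odd$); hence in each $\pi_j$ there is a unique long secant through $P_\alpha$, call it $\ell^{(j)}$, and the value of $c$ at the generic points of $\ell^{(j)}$ equals $\alpha$. By Lemma~\ref{OneTypeCat} all the $\pi_j$ have the same weight $w$, and counting $\supp{c}$ through the pencil of planes on $s$ gives the identity $\wt{c}=3+(q+1)(w-3)$.

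The plan rests on one local observation: \emph{through a long secant $\ell$ of a type-$T$ plane, the number $x$ of planes of type $\boldsymbol{\mathcal{O}}$ satisfies $1\le x\le 2$.} By Lemma~\ref{UniqueLongSecants} every type-$\boldsymbol{\mathcal{T}}$ plane on $\ell$ is of type $T$ with weight $w$; if $x=0$, then counting $\supp{c}$ through the pencil on $\ell$ and comparing with $\wt{c}=3+(q+1)(w-3)$ forces $\wt{\restr{c}{\ell}}=3$, which is impossible for a long secant. For the upper bound, each plane of type $\boldsymbol{\mathcal{O}}$ contains at least $\tfrac12 q(q-1)$ points (Lemma~\ref{LinesAndPlanes}), and the resulting lower bound for $\wt{c}$, compared with the exact value and using $w\le 3q$, yields $x\le 2$.

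Now the case $T=\nonconcurrent$. For $j\ne j'$, the plane $\spann{\ell^{(j)}}{\ell^{(j')}}$ meets $\pi_j$ in the long secant $\ell^{(j)}$, so were it of type $\boldsymbol{\mathcal{T}}$ it would be of type $\nonconcurrent$ by Lemma~\ref{UniqueLongSecants}; but then $P_\alpha=\ell^{(j)}\cap\ell^{(j')}$ would be a triangle vertex of it, forcing $c(P_\alpha)=\alpha+\alpha$, i.e.\ $\alpha=0$, a contradiction. So each $\spann{\ell^{(j)}}{\ell^{(j')}}$ is of type $\boldsymbol{\mathcal{O}}$. Build the graph whose vertices are the type-$\boldsymbol{\mathcal{O}}$ planes containing at least two of the $q+1$ lines $\ell^{(j)}$, and whose edges are the $\ell^{(j)}$, each edge joining the (at most two) vertices containing it. If some $\ell^{(j)}$ lay in only one such plane, then every $\spann{\ell^{(j)}}{\ell^{(j')}}$ would equal it and all $\ell^{(j)}$ would lie in one plane $\rho$; but then $\rho$ consists of $P_\alpha$ together with the $q+1$ lines $\ell^{(j)}$, so $\restr{c}{\rho}-\alpha\boldsymbol{1}$ (which lies in $C_1(2,q)$, since $p\mid q$ gives $\boldsymbol{1}\in C_1(2,q)$) is supported exactly on the $2q+2$ triangle vertices of the $\pi_j$, all with value $\beta$ or $\gamma$ — contradicting Corollary~\ref{PlaneResults}, as a linear combination of at most two lines covers at most $2q+1$ points. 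Hence every edge has degree exactly $2$, the graph is simple, and since two of the $\ell^{(j)}$ always lie in a common such plane, every two of its $q+1\ge 4$ edges meet; such a graph is a star, forcing all $\ell^{(j)}$ into one plane — the same contradiction.

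The case $T=\odd$ follows the same template, but here $q=p$ is prime and one uses the structure of Example~\ref{OddCodewordGen}: along each long secant through the common point the values of $c$ run over all of $\mathbb{F}_p$. Split on $w$. If $w=3q-3$ (all common points $S_j$ satisfy $c(S_j)=0$), then $\spann{\ell^{(j)}}{\ell^{(j')}}$ cannot be of type $\odd$ — its common point would be $P_\alpha$ with $c(P_\alpha)=\alpha\ne 0$, forcing weight $3q-2$ — so the graph argument again collapses all $\ell^{(j)}$ into a single plane $\rho$ of type $\boldsymbol{\mathcal{O}}$ with $\wt{\restr{c}{\rho}}=q^2-q-1$, and (since every line of $\rho$ is short or long and $\rho$ has too many holes to form an arc) $\rho$ contains a $3$-secant of $\supp{c}$, contradicting Lemma~\ref{SpacesThrough3Secant} because $q^2-q-1>B_{2,q}$. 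If $w=3q-2$, comparing the value multisets along $\ell^{(j)}$ shows that $\spann{\ell^{(j)}}{\ell^{(j')}}$ of type $\odd$ forces $c(S_j)=c(S_{j'})=\alpha$, and symmetrically with $\beta$ and $\gamma$; since $\alpha,\beta,\gamma$ cannot all be equal (otherwise $\restr{c}{s}$ is a scalar multiple of $\boldsymbol{1}$ and one is in an already-handled configuration), for each $j$ at least one of the three pencils $\{\spann{l^{(j)}}{l^{(j')}}\}_{j'}$ through $S_j$ consists entirely of type-$\boldsymbol{\mathcal{O}}$ planes, and a pigeonhole step plus the star argument produces a type-$\boldsymbol{\mathcal{O}}$ plane with roughly $\tfrac12 q^2$ points containing a $3$-secant — again contradicting Lemma~\ref{SpacesThrough3Secant}. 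The main obstacle, and the place needing the most care, is exactly this last step in the $\odd$ case: showing that the large type-$\boldsymbol{\mathcal{O}}$ plane produced really does contain a $3$-secant of $\supp{c}$, which one argues from the count of its holes together with the line dichotomy of Lemma~\ref{LinesAndPlanes}.
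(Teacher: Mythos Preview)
Your approach is substantially different from the paper's, and while the $\nonconcurrent$ case can be made to work along the lines you sketch, the $\odd$ case has a genuine gap that you yourself flag.

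The concrete problem: in the $\odd$ sub-case you produce a plane $\rho$ of type $\boldsymbol{\mathcal{O}}$ and want to find a $3$-secant inside it so as to invoke Lemma~\ref{SpacesThrough3Secant}. But under the standing weight hypothesis, Lemma~\ref{PlanesThrough3Secant} says precisely that every plane containing a $3$-secant is of type $\boldsymbol{\mathcal{T}}$; a type-$\boldsymbol{\mathcal{O}}$ plane therefore contains \emph{no} $3$-secant. Your ``too many holes to form an arc'' argument only shows that some line of $\rho$ meets the hole set in at least three points; by the line dichotomy that line is short, but it may perfectly well be a $0$-, $1$- or $2$-secant (with $q+1$, $q$ or $q-1$ holes) rather than a $3$-secant. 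The $w=3q-2$ sub-case is even more schematic and runs into the same wall. (A minor related slip: your opening paragraph asserts that generic points of $\ell^{(j)}$ carry value $\alpha$, which is true for $\nonconcurrent$ but false for $\odd$, where the values along a long secant run through all of $\mathbb{F}_p$.)

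The paper's proof avoids all of this with one short, type-independent trick. Having established $\wt{c}=(q+1)w-3q$ as you do, it selects inside the fixed plane $\pi$ a $1$- or $2$-secant $r$ through some $Q\in t\cap\supp{c}$ (such an $r$ is easy to exhibit in both $\odd$ and $\nonconcurrent$). For an arbitrary plane $\sigma\supseteq r$, take a long secant $s\subseteq\sigma$ through $Q$; then $\langle t,s\rangle$ is a plane through $t$, hence of type $T$, and it shares the long secant $s$ with $\sigma$. Lemma~\ref{UniqueLongSecants} now forces $\sigma$ to be of type $T$ with weight $w$ whenever $\sigma$ is of type $\boldsymbol{\mathcal{T}}$; otherwise $\wt{\restr{c}{\sigma}}\geqslant\tfrac12 q(q-1)>w$. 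Counting $\supp{c}$ through the pencil on $r$ gives
\[
\wt{c}\;\geqslant\;(q+1)(w-2)+2\;=\;(q+1)w-2q,
\]
contradicting the exact value $(q+1)w-3q$. The moral is simple: switching from the $3$-secant $t$ to a shorter secant $r$ through the same point gains exactly one $q$ in the lower bound, and that is all that is needed --- no case split, no graph argument, no arc estimate.
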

	\begin{proof}
		Suppose, on the contrary, that $t$ is such a $3$-secant. Fix a plane $\pi$ through $t$. By Lemma \ref{OneTypeCat}, the weight of the code word $c$ is known, as we can count: $\wt{c}=(q+1)\big(\wt{\restr{c}{\pi}}-3\big)+3=(q+1)\wt{\restr{c}{\pi}}-3q$.\\
		Remark that, as $\pi$ is a plane of either type $\odd$ or $\nonconcurrent$, we can always find a $1$- or $2$-secant $r$ in $\pi$ such that $t$ and $r$ intersect in a point $Q$ of $\supp{c}$. Indeed,
		\begin{itemize}
			\item if $\pi$ is a plane of type $\odd$, we can simply connect two points: a hole lying on a long secant in $\pi$, different from the intersection point of the three long secants in $\pi$, with a point of $t\cap\supp{c}$ on another long secant in $\pi$.
			\item if $\pi$ is plane of type $\nonconcurrent$, we can connect a point lying on two long secants with the unique point of $t$ lying on the third long secant.
		\end{itemize}
		Let $\sigma$ be a plane through $r$, not equal to $\pi$. Choose a long secant $s$ in $\sigma$ through $Q$. This is possible since every plane of type $\mathcal T \setminus \{T_0\}$ obviously contains a long secant, and planes of type $\mathcal O$ contain long secants as well (cfr. Lemma \ref{LinesAndPlanes}).
		The plane $\spann{t}{s}$ contains the $3$-secant $t$, thus this plane has to be of the same subtype as $\pi$.
		In particular, this means that $\spann{t}{s}$ is a plane of type $\odd$ or $\nonconcurrent$.
		However, by Lemma \ref{UniqueLongSecants}, the plane $\spann{t}{s}$ then has to be of the same type as $\sigma$ as well, as they share the long secant $s$, unless $\sigma$ is a plane of type $\boldsymbol{\mathcal{O}}$.\\
		Therefore, all planes $\sigma$ through $r$ satisfy either $\wt{\restr{c}{\sigma}} = \wt{\restr{c}{\pi}}$ (if $\sigma$ is a plane of type $\mathcal T$), or $\wt{\restr{c}{\sigma}} \geq \frac{1}{2}q(q-1) > \wt{\restr{c}{\pi}}$ (if $\sigma$ is a plane of type $\mathcal O$, by Lemma \ref{LinesAndPlanes}). 
		In both cases, this yields the following lower bound on $\wt{c}$:
		\[
			(q+1)\wt{\restr{c}{\pi}}-3q=\wt{c}\geqslant(q+1)\big(\wt{\restr{c}{\pi}}-2\big)+2\textnormal{,}
		\]
		a contradiction.
	\end{proof}

	The following proposition is a consequence of the way code words of type $\boldsymbol{\mathcal{T}}$ are defined (Definition \ref{DefHyperplane}).

	\begin{proposition}\label{TypesOfPlanesAndHyperplanes}
		Suppose that $\Pi$ is a hyperplane of type $T\in\boldsymbol{\mathcal{T}}$, with $\kappa$ the $(n-4)$-dimensional vertex of $\supp{\restr{c}{\Pi}}$. Suppose that $t$ is a $3$-secant contained in $\Pi$. Then $t$ is disjoint to $\kappa$ and all $q^{n-3}$ planes in $\Pi$ that contain $t$ but that are disjoint to $\kappa$ are planes of type $T$. The other $\theta_{n-4}$ planes in $\Pi$ through $t$ intersect $\kappa$ in a point and are all planes of type $\concurrent$.
	\end{proposition}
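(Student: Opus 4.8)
The plan is to unwind Definition~\ref{DefHyperplane} for the hyperplane $\Pi$: it supplies a constant $\lambda$ with $\restr{c}{\kappa}=\lambda\cdot\boldsymbol{1}$, a plane $\pi\subseteq\Pi$ of type $T$ disjoint from $\kappa$, and the identity $c(P)=c(\spann{\kappa}{P}\cap\pi)$ for all $P\in\Pi\setminus\kappa$. In particular $c$ is constant on each $(n-3)$-space $\spann{\kappa}{P}$ away from $\kappa$, equal to the value of $c$ at the unique point where this $(n-3)$-space meets the complementary plane $\pi$. I would first show that $t$ is disjoint from $\kappa$, then handle the planes through $t$ that avoid $\kappa$ by a perspectivity argument, and finally the planes through $t$ that meet $\kappa$ by an explicit subtraction of three lines.

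For disjointness: $t$ is not contained in $\kappa$ (otherwise $t$ would be a $0$- or $(q+1)$-secant according as $\lambda=0$ or $\lambda\neq0$), so if $t\cap\kappa\neq\emptyset$ it is a single point $R$. For every $P\in t\setminus\{R\}$ the $(n-3)$-space $\spann{\kappa}{P}$ contains $R$ and $P$, hence all of $t$, so it equals $\spann{\kappa}{t}$; let $P^\ast$ be its unique point on $\pi$. Then the $q$ points of $t\setminus\{R\}$ all carry the value $c(P^\ast)$, so $t$ meets $\supp{c}$ in at least $q$ points or in at most one point, impossible for a $3$-secant when $q>3$.

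Next the counting. The planes of $\Pi$ through $t$ correspond to the points of $\Pi/t\cong\mathrm{PG}(n-3,q)$; since $t\cap\kappa=\emptyset$, the image of $\kappa$ there is an $(n-4)$-space, so $\theta_{n-4}$ of these planes meet $\kappa$ and $q^{n-3}$ avoid it, and each plane through $t$ that meets $\kappa$ meets it in a single point (two disjoint lines span a solid, not a plane). If $\rho$ is a plane through $t$ disjoint from $\kappa$, then $\rho$ is a complement of $\kappa$ in $\Pi$, so the perspectivity with centre $\kappa$ restricts to a projectivity $\pi\to\rho$, and the identity $c(P)=c(\spann{\kappa}{P}\cap\pi)$ says exactly that this projectivity carries $\restr{c}{\pi}$ onto $\restr{c}{\rho}$; since the planar types in $\boldsymbol{\mathcal{T}}$ are preserved by collineations (for type $\odd$ this is Example~\ref{OddCodewordGen}), $\restr{c}{\rho}$ is again of type $T$.

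Finally, let $\rho$ be a plane through $t$ with $\rho\cap\kappa=\{R\}$. Applying the argument of the second paragraph to each of the $q+1$ lines of $\rho$ through $R$ shows that $c$ is constant off $R$ on each of them and equals $\lambda$ at $R$. Writing $Q_1,Q_2,Q_3$ for the points of $t\cap\supp{c}$ and using $R\notin t$, the three lines $\ell_i=\spann{R}{Q_i}$ are distinct and concurrent at $R$, and they are the only lines through $R$ in $\rho$ carrying a non-zero value off $R$ --- any other line through $R$ in $\rho$ meets $t$ in a hole, so its constant off-$R$ value is $0$. With $a_i\neq0$ the constant value on $\ell_i\setminus\{R\}$, the vector $\restr{c}{\rho}-a_1\ell_1-a_2\ell_2-a_3\ell_3$ lies in $C_1(2,q)$ by Lemma~\ref{InductiveLemma} and is supported on at most $\{R\}$, hence vanishes by Theorem~\ref{MinimumWeight}. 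Thus $\restr{c}{\rho}=a_1\ell_1+a_2\ell_2+a_3\ell_3$ is a linear combination of three concurrent lines, so $\rho$ is a plane of type $\concurrent$. I expect the main obstacle to be this last paragraph: pinning down that precisely the three joins $\spann{R}{Q_i}$ are the long lines of $\rho$ through $R$, and making the perspectivity/collineation-invariance step of the previous paragraph fully rigorous; the disjointness of $t$ and $\kappa$ and the quotient-space count are routine.
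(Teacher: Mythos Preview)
Your proof is correct. The paper gives no explicit argument, merely noting that the proposition ``is a consequence of the way code words of type $\boldsymbol{\mathcal{T}}$ are defined (Definition~\ref{DefHyperplane})''; your write-up is a faithful and rigorous unwinding of exactly that definition, with the disjointness of $t$ and $\kappa$, the quotient count, the perspectivity argument for planes avoiding $\kappa$, and the explicit three-line decomposition for planes meeting $\kappa$ all carried out correctly.
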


	\begin{lemma}\label{NotAllCatBHyper}
		Suppose that $(3q-6)\theta_{n-2}+3\leqslant\wt{c}\leqslant B_{n,q}$. Then a $3$-secant is never contained in $\theta_{n-2}$ hyperplanes of the same type $T\in\{\odd,\nonconcurrent\}$.
	\end{lemma}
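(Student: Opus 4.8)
The plan is to split on whether $n=3$ or $n\geqslant 4$. For $n=3$ there is nothing new: a hyperplane is a plane, $\theta_{n-2}=\theta_1=q+1$, and $(3q-6)\theta_{n-2}+3=3q^2-3q-3$, so the assertion is exactly Lemma \ref{NotAllCatBPlane}.

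So assume $n\geqslant 4$, and suppose for contradiction that $t$ is a $3$-secant lying in all $\theta_{n-2}$ hyperplanes through it, each of type $T\in\{\odd,\nonconcurrent\}$. The first observation is that every plane $\rho$ through $t$ is automatically of type $T$ or of type $\concurrent$. Indeed, $\rho$ lies in at least one hyperplane $\Pi$ through $t$ (there are $\theta_{n-3}\geqslant 1$ such hyperplanes, all of type $T$ by assumption), and Proposition \ref{TypesOfPlanesAndHyperplanes} then states that $t$ is disjoint from the $(n-4)$-dimensional vertex $\kappa$ of $\Pi$, that $\rho$ is of type $T$ whenever $\rho\cap\kappa=\emptyset$, and that $\rho$ is of type $\concurrent$ otherwise; moreover the planar type of $\rho$ is intrinsic, so this does not depend on the choice of $\Pi$. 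Write $a$ for the number of planes of type $T$ through $t$.

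The heart of the argument is a double count of the flags $(\rho,\Pi)$ with $\rho$ a type-$T$ plane through $t$ and $\Pi$ a hyperplane containing $\rho$. Counting over $\rho$: every plane through $t$ lies in exactly $\theta_{n-3}$ hyperplanes, all of which contain $t$ and are therefore of type $T$, so the number of flags equals $a\,\theta_{n-3}$. Counting over $\Pi$: each of the $\theta_{n-2}$ hyperplanes through $t$ is of type $T$ and, by Proposition \ref{TypesOfPlanesAndHyperplanes}, contains exactly $q^{n-3}$ planes of type $T$ through $t$ (together with $\theta_{n-4}$ of type $\concurrent$, and $q^{n-3}+\theta_{n-4}=\theta_{n-3}$ accounts for all planes through $t$ in it). Hence the number of flags also equals $\theta_{n-2}\,q^{n-3}$, and therefore $a\,\theta_{n-3}=\theta_{n-2}\,q^{n-3}$. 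Since $\theta_{n-3}\equiv 1\pmod q$ we have $\gcd(\theta_{n-3},q^{n-3})=1$, so $\theta_{n-3}\mid\theta_{n-2}$; but $\theta_{n-2}=q\,\theta_{n-3}+1\equiv 1\pmod{\theta_{n-3}}$, which forces $\theta_{n-3}=1$. This is impossible for $n\geqslant 4$, where $\theta_{n-3}\geqslant q+1$, and this contradiction finishes the proof.

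I do not anticipate a real obstacle here once Proposition \ref{TypesOfPlanesAndHyperplanes} is in hand: the only point demanding care is exhaustiveness of the two tallies in the flag count — that a plane through $t$ can only be of type $T$ or of type $\concurrent$, and that in a type-$T$ hyperplane precisely $q^{n-3}$ of the $\theta_{n-3}$ planes through $t$ are of type $T$ — and both are supplied verbatim by that proposition. Note that the weight hypothesis on $c$ enters this argument only through the base case $n=3$.
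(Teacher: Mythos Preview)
Your proof is correct and follows essentially the same route as the paper: reduce $n=3$ to Lemma~\ref{NotAllCatBPlane}, and for $n\geqslant4$ double-count the flags $(\rho,\Pi)$ with $\rho$ a type-$T$ plane and $\Pi$ a type-$T$ hyperplane through $t$, using Proposition~\ref{TypesOfPlanesAndHyperplanes} to obtain $a\,\theta_{n-3}=\theta_{n-2}\,q^{n-3}$. The only cosmetic difference is the endgame: the paper rewrites $a=q^{n-2}+1-\frac{q^{n-3}-1}{q^{n-2}-1}$ and notes the fraction is not an integer, whereas you argue via $\gcd(\theta_{n-3},q^{n-3})=1$ that $\theta_{n-3}\mid\theta_{n-2}=q\theta_{n-3}+1$ --- the two arguments are equivalent.
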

	\begin{proof}
		By Lemma \ref{NotAllCatBPlane}, we can assume that $n>3$. Suppose that $t$ is a $3$-secant with the described property. Now define
		\[
			S=\{(\pi,\Pi):t\subseteq\pi\subseteq\Pi, \pi\textnormal{ a plane},\Pi\textnormal{ a hyperplane},\textnormal{both of type }T\}\textnormal{.}
		\]
		Fix an arbitrary $T$-typed plane $\pi_0\supseteq t$. As all hyperplanes through $t$ are of the same type $T$, all hyperplanes through $\pi_0$ have this property as well. Thus, the number of elements in $S$ with a fixed first argument $\pi_0$ equals $\theta_{n-3}$.\\
		Fix an arbitrary $T$-typed hyperplane $\Pi_0\supseteq t$. By Proposition \ref{TypesOfPlanesAndHyperplanes}, the number of elements in $S$ with a fixed second argument $\Pi_0$ equals $q^{n-3}$ (the number of planes in $\Pi_0$ through $t$, disjoint to an $(n-4)$-subspace not intersecting $t$).\\
		Let $x_\pi$ be the number of $T$-typed planes through $t$. By double counting, we get:
		\[
			x_\pi\cdot\theta_{n-3}=|S|=\theta_{n-2}\cdot q^{n-3}\quad\Longleftrightarrow\quad x_\pi=\frac{q^{n-1}-1}{q^{n-2}-1}q^{n-3}=q^{n-2}+1-\frac{q^{n-3}-1}{q^{n-2}-1}
		\]
		As $x_\pi$ is known to be an integer, this is only valid when the fraction on the right is an integer. As $n>3$, this is never the case.
	\end{proof}

	\begin{lemma}\label{VerticesAreEqual}
		Suppose that $(3q-6)\theta_{n-2}+3\leqslant\wt{c}\leqslant B_{n,q}$. Let $\Pi_1$ and $\Pi_2$ be two hyperplanes of type $\concurrent$ and let $\mathcal{C}_i$ be the union of the three $(n-2)$-subspaces present in the linear combination $\restr{c}{\Pi_i}$, thus intersecting in a common $(n-3)$-space $\kappa_i$ ($i=1,2$). Suppose that $\mathcal{C}_1$ and $\mathcal{C}_2$ have an $(n-2)$-subspace in common. Then either
		\begin{itemize}
		    \item $\kappa_1=\kappa_2$, or
		    \item $n>3$ and there exists a solid $S$ containing a long secant that is only contained in planes in $S$ of type $\mathcal{T}$.
		\end{itemize}
	\end{lemma}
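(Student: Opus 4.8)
The plan is to assume $\kappa_1\neq\kappa_2$ and show that this pins the local structure of $c$ down so tightly that for $n>3$ the required solid can be read off directly, whereas for $n=3$ it contradicts the lower bound on $\wt c$. We may assume $\Pi_1\neq\Pi_2$: if $\Pi_1=\Pi_2$ then $\kappa_1=\kappa_2$, since for $q\geqslant3$ the support of a code word of type $\concurrent$ recovers its three constituent $(n-2)$-spaces and hence their common $(n-3)$-dimensional intersection. Set $H:=\Pi_1\cap\Pi_2$. Any $(n-2)$-space shared by $\mathcal{C}_1$ and $\mathcal{C}_2$ lies in $\Pi_1\cap\Pi_2$, hence equals $H$, so $H$ is one of the three $(n-2)$-spaces making up each $\mathcal{C}_i$ and $\kappa_i\subseteq H$.

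Write $\restr c{\Pi_i}=\lambda_1^{(i)}H+\lambda_2^{(i)}M_2^{(i)}+\lambda_3^{(i)}M_3^{(i)}$ with all coefficients non-zero, where $M_2^{(i)},M_3^{(i)}$ are the other two $(n-2)$-spaces of $\mathcal{C}_i$. Since $M_2^{(i)}\cap H=M_3^{(i)}\cap H=\kappa_i$, the restriction $\restr c H$ equals $\lambda_1^{(i)}$ on $H\setminus\kappa_i$ and $\lambda_1^{(i)}+\lambda_2^{(i)}+\lambda_3^{(i)}$ on $\kappa_i$. When $\kappa_1\neq\kappa_2$ the sets $H\setminus(\kappa_1\cup\kappa_2)$, $\kappa_1\setminus\kappa_2$ and $\kappa_2\setminus\kappa_1$ are all non-empty, and comparing the two expressions for $\restr c H$ on them forces $\lambda_1^{(1)}=\lambda_1^{(2)}=:t$, $\lambda_2^{(1)}+\lambda_3^{(1)}=0$ and $\lambda_2^{(2)}+\lambda_3^{(2)}=0$. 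Hence $\restr c H\equiv t$ is constant, and $t=\lambda_1^{(1)}\neq0$, so $H\subseteq\supp c$ and every line of $H$ is a $(q+1)$-secant.

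Still assuming $\kappa_1\neq\kappa_2$, choose a point $P_1\in\kappa_1\setminus\kappa_2$, a line $\ell\subseteq H$ through $P_1$ not contained in $\kappa_1$ (so $\ell\cap\kappa_1=\{P_1\}$ and $\ell\cap\kappa_2=\{P_2\}$ for some $P_2\neq P_1$), points $X\in\Pi_1\setminus H$ and $Y\in\Pi_2\setminus H$, and put $S:=\langle\ell,X,Y\rangle$. One checks that $S$ is a solid, that $\rho_i:=S\cap\Pi_i$ is a plane with $\rho_1\cap\rho_2=S\cap H=\ell$, and that $\restr c{\rho_i}$ is of type $\concurrent$, its three concurrent lines being $\ell=\rho_i\cap H$, $\rho_i\cap M_2^{(i)}$, $\rho_i\cap M_3^{(i)}$ through $\rho_i\cap\kappa_i$ with coefficients $(\lambda_1^{(i)},\lambda_2^{(i)},-\lambda_2^{(i)})$. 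Set $m:=\rho_1\cap M_2^{(1)}$; then $m\cap H=\{P_1\}$ and $m$ is a $(q+1)$-secant, hence a long secant. I claim every plane of $S$ through $m$ is of type $\boldsymbol{\mathcal{T}}$: one of them is $\rho_1$, of type $\concurrent\in\boldsymbol{\mathcal{T}}$, and any other such plane $\tau$ meets $\rho_2$ in a line $r\ni P_1$ with $r\neq\ell$; since $P_1\notin\kappa_2=M_2^{(2)}\cap H=M_3^{(2)}\cap H$, the line $r$ meets $\supp c$ in exactly the three distinct points $P_1$, $r\cap M_2^{(2)}$, $r\cap M_3^{(2)}$ (with values $\lambda_1^{(2)},\lambda_2^{(2)},-\lambda_2^{(2)}$), so $r$ is a $3$-secant and Lemma \ref{PlanesThrough3Secant} forces $\tau$ to be of type $\boldsymbol{\mathcal{T}}$.

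If $n>3$, then $S$ together with the long secant $m$ is precisely the configuration in the second alternative, and we are done. If $n=3$, then $S$ is the whole space, so all $q+1$ planes through $m$ are of type $\boldsymbol{\mathcal{T}}$ and hence of weight at most $3q+1$; since these planes pairwise meet in the $(q+1)$-secant $m$, inclusion-exclusion over the pencil gives $\wt c\leqslant(q+1)(3q+1)-q(q+1)=2q^2+3q+1$, contradicting $\wt c\geqslant(3q-6)\theta_{n-2}+3=3q^2-3q-3$ for $q\geqslant7$, so $\kappa_1=\kappa_2$ in that case. The main obstacle is the geometric bookkeeping in the two central paragraphs — verifying that $P_1$, $\ell$, $X$, $Y$ can be chosen so that every intersection above has the claimed dimension, and keeping precise track of which coefficients vanish; granting that, the appeal to Lemma \ref{PlanesThrough3Secant} and the closing count are immediate.
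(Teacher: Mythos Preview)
Your proof is correct and follows essentially the same route as the paper's: assume $\kappa_1\neq\kappa_2$, pick a line in the common $(n-2)$-space $\Sigma=H$ meeting $\kappa_1$ and $\kappa_2$ in distinct points, build a solid $S$ from two $\concurrent$-planes $\rho_1\subseteq\Pi_1$ and $\rho_2\subseteq\Pi_2$ through that line, take one of the other concurrent lines in $\rho_1$ as the long secant, and observe that every other plane of $S$ through it meets $\rho_2$ in a $3$-secant, so Lemma~\ref{PlanesThrough3Secant} applies; the $n=3$ case is then ruled out by the same weight count $\wt{c}\leqslant 2q^2+3q+1$. The only real difference is cosmetic: you preface the construction with an explicit comparison of the two descriptions of $\restr{c}{H}$, deducing $\lambda_2^{(i)}+\lambda_3^{(i)}=0$ and $\restr{c}{H}\equiv t\neq0$, which the paper leaves implicit (it only needs that points of $\Sigma\setminus\kappa_i$ lie in $\supp c$); your version makes the subsequent verifications a little cleaner but is not logically required.
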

	\begin{proof}
		Let $\Sigma$ be the $(n-2)$-space that $\mathcal{C}_1$ and $\mathcal{C}_2$ have in common. As $q>3$, $\Sigma$ must be one of the three subspaces present in the linear combination of both $\restr{c}{\Pi_1}$ and $\restr{c}{\Pi_2}$.\\
		Suppose that $\kappa_1\neq\kappa_2$. As these are spaces of the same dimension, we can find a point $P_1\in\kappa_1\setminus\kappa_2$ and a point $P_2\in\kappa_2\setminus\kappa_1$; define $l=\spann{P_1}{P_2}$.
		Remark that $l$ must be a $(q+1)$-secant to $\supp{c}$.
		This follows from the fact that every point of $l$ lies in $\Sigma \setminus \kappa_i$, for at least one choice of $i$.
		Looking in $\mathcal C_i$, we see that all points of $\Sigma \setminus \kappa_i$ lie in $\supp c$.
		Now take planes $\pi_i$ in $\Pi_i$, for $i = 1, 2$, through $l$, not contained in $\Sigma$. Due to this choice, it is clear that the plane $\pi_i$ will intersect each $(n-2)$-subspace of $\mathcal{C}_i$ in a line (through $P_i$). Define $S=\spann{\pi_1}{\pi_2}$.\\
		Choose a $(q+1)$-secant $s$ in $\pi_1$, different from $l$. As $P_1\neq P_2$, all planes in $S$ through $s$ (not equal to $\pi_1$) intersect $\pi_2$ in a $3$-secant and thus, by Lemma \ref{PlanesThrough3Secant}, are planes of type $\boldsymbol{\mathcal{T}}$. As $\pi_1$ is a plane of type $\boldsymbol{\mathcal{T}}$ as well, we know that all planes in $S$ through $s$ are planes of type $\boldsymbol{\mathcal{T}}$.\\
		If $n=3$, we get that $(3q-6)(q+1)+3\leqslant\wt{c}=\wt{\restr{c}{S}}\leqslant q\cdot(2q)+(3q+1)=2q^2+3q+1$, which is only valid if $q<7$, contrary to the assumptions.
	\end{proof}
	
	The following theorem connects all previous results and proves Theorem \ref{THETHEOREM}.

	\begin{theorem}\label{LastTheorem}
		Suppose $(3q-6)\theta_{n-2}+3\leqslant\wt{c}\leqslant B_{n,q}$. Then there exists a plane $\pi$ of type $T\in\{\odd,\nonconcurrent,\concurrent\}$ and an $(n-3)$-space $\kappa$ such that $\pi\cap\kappa=\emptyset$ and
		\[
		    c=\sum_{P\in\pi}c(P)\cdot\spann{\kappa}{P}\textnormal{.}
		\]
	\end{theorem}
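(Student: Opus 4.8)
The plan is to prove the statement by induction on $n\geqslant3$; in the base case $n=3$ the space $\kappa$ is a single point. Fix a $3$-secant $t$ of $\supp{c}$ by Lemma \ref{Exist3Secant}. By Lemma \ref{PlanesThrough3Secant} together with Corollary \ref{PlaneResults}, every plane through $t$ has type $\odd$, $\nonconcurrent$ or $\concurrent$. For $n>3$, every hyperplane $\Pi$ through $t$ satisfies $\wt{\restr{c}{\Pi}}\leqslant B_{n-1,q}$ by Lemma \ref{SpacesThrough3Secant}, while $\restr{c}{\Pi}$ contains the $3$-secant $t$ and so is not a combination of at most two $(n-2)$-subspaces of $\Pi$; hence $\wt{\restr{c}{\Pi}}\geqslant(3q-6)\theta_{n-3}+3$ by Theorem \ref{TwoHyperplaneTheorem}, the induction hypothesis applies to $\restr{c}{\Pi}$, and $\Pi$ itself has type $\odd$, $\nonconcurrent$ or $\concurrent$.

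Next I would exhibit a hyperplane (a plane when $n=3$) through $t$ of type $\concurrent$. If none existed, any two hyperplanes through $t$ would carry types lying in $\{\odd,\nonconcurrent\}$, which must coincide by Lemma \ref{OneTypeCat}, so all $\theta_{n-2}$ hyperplanes through $t$ would share one common type in $\{\odd,\nonconcurrent\}$, contradicting Lemma \ref{NotAllCatBHyper} (Lemma \ref{NotAllCatBPlane} if $n=3$). Fix such a hyperplane $\Pi_0\supseteq t$, and let $\kappa$ denote the common $(n-3)$-subspace of the three $(n-2)$-subspaces of $\Pi_0$ occurring in $\restr{c}{\Pi_0}$; then $\restr{c}{\kappa}$ is constant, equal to the sum of the three coefficients. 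This $\kappa$ will be the required vertex.

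The core of the proof is to promote this local picture to the statement that $c$ itself is a cone with vertex $\kappa$, i.e. that $c$ is constant on $\Lambda\setminus\kappa$ for every $(n-2)$-space $\Lambda\supseteq\kappa$. To do this I would range over the hyperplanes through $t$: each is a cone over a plane of one of the three special types; Lemma \ref{OneTypeCat} severely restricts which combinations of types can occur around $t$; Lemma \ref{UniqueLongSecants} transports a known planar type along long secants to neighbouring planes; and whenever two $\concurrent$ hyperplanes share one of the $(n-2)$-subspaces appearing in their linear combinations, Lemma \ref{VerticesAreEqual} forces their $(n-3)$-vertices to coincide with $\kappa$ --- the exceptional alternative of that lemma, a solid whose long secant lies in planes of type $\boldsymbol{\mathcal{T}}$ only, being ruled out by Lemmas \ref{LinesAndPlanes} and \ref{PlanesThrough3Secant} inside the solid. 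Once a common vertex has been fixed, Proposition \ref{TypesOfPlanesAndHyperplanes} lets the cone structures of the individual hyperplanes be glued into a global cone with vertex $\kappa$. Finally, taking a plane $\pi$ through $t$ disjoint from $\kappa$ (such a plane exists since $\theta_{n-3}<\theta_{n-2}$), the restriction $\restr{c}{\pi}$ is a code word of $C_1(2,q)$ by Lemma \ref{InductiveLemma} containing a $3$-secant, hence of type $\odd$, $\nonconcurrent$ or $\concurrent$ by Corollary \ref{PlaneResults}; and the cone property of $c$ is precisely the identity $c=\sum_{P\in\pi}c(P)\cdot\spann{\kappa}{P}$, which holds as an identity of functions because $\sum_{P\in s}\spann{\kappa}{P}=\spann{\kappa}{s}$ for every long secant $s$ of $\pi$.

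I expect the globalisation step to be the real obstacle. A hyperplane through $t$ of type $\odd$ or $\nonconcurrent$ comes equipped with only an $(n-4)$-dimensional vertex in the sense of Definition \ref{DefHyperplane}, so one must verify by hand that, once $c$ is recognised as a cone with the larger vertex $\kappa$, these restrictions stay consistent with this; and the exceptional conclusion of Lemma \ref{VerticesAreEqual} has to be discarded with care. The remaining points --- that $\restr{c}{\pi}$ cannot degenerate to a combination of at most two lines, as $\wt{c}>2q^{n-1}+\theta_{n-2}$, and the pointwise check of the cone identity --- are routine.
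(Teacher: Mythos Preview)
Your plan matches the paper's: induct on the dimension, fix a $3$-secant $t$, use Lemmas \ref{OneTypeCat} and \ref{NotAllCatBHyper} to locate a hyperplane $\Pi_0\supseteq t$ of type $\concurrent$, and declare its $(n-3)$-vertex to be $\kappa$. The lemmas you invoke are exactly those the paper uses.

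Where you stay vague is where the paper does the actual work. The globalisation is \emph{not} achieved by gluing cone structures of all hyperplanes through $t$ via Proposition \ref{TypesOfPlanesAndHyperplanes}; rather, the paper fixes a plane $\pi\supseteq t$ disjoint from $\kappa$ (outside $\Pi_0$) and walks through the points of $\pi$ along chains of $3$-secants. Concretely: pick one of the three $(n-2)$-spaces $\Psi_1$ of $\restr{c}{\Pi_0}$, choose a fresh $3$-secant $t_1\subset\pi$ through the point $\Psi_1\cap\pi$, and set $\Pi_1:=\spann{\Psi_1}{t_1}$. Every plane in $\Pi_1$ through $t_1$ meets $\Psi_1$ in a long secant, so Lemma \ref{UniqueLongSecants} forces each such plane, and then Proposition \ref{TypesOfPlanesAndHyperplanes} forces $\Pi_1$ itself, to be of type $\concurrent$. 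Since $\Pi_0$ and $\Pi_1$ share the $(n-2)$-space $\Psi_1$, Lemma \ref{VerticesAreEqual} yields $\kappa_1=\kappa$, and the cone identity holds on $\Pi_1$. Iterating along chains $t=t_0,t_1,t_2,\dots$ in $\pi$ with $t_i\cap t_{i+1}\in\supp{c}$ reaches most points of $\pi$; the remaining points of $\pi$ lie on a $0$-, $1$- or $2$-secant $r$ whose other $q$ points are already controlled, and the hyperplane $\spann{\kappa}{r}$ is finished off by bounding its weight and invoking the induction hypothesis once more. This iterative mechanism is the missing core of your sketch.

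Your disposal of the exceptional alternative of Lemma \ref{VerticesAreEqual} is not correct: Lemmas \ref{LinesAndPlanes} and \ref{PlanesThrough3Secant} do not by themselves contradict the existence of the solid $S$. The right argument is that inside $S$ all $q+1$ planes through the long secant $s$ are of type $\boldsymbol{\mathcal{T}}$ and each (except $\pi_1$) contains a $3$-secant, so $\wt{\restr{c}{S}}\leqslant 2q^2+O(q)<(3q-6)(q+1)+3$; Theorem \ref{TwoHyperplaneTheorem} applied inside $S$ then forces $\restr{c}{S}$ to be a combination of at most two planes, contradicting the presence of the type-$\concurrent$ plane $\pi_2\subset S$. (The paper itself passes over this point without comment.)
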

	\begin{proof}
		We will prove this by induction on $n$. When $n = 2$, we can choose $\kappa=\emptyset$ and refer to Corollary \ref{PlaneResults}. Now assume $n\geqslant3$ and suppose the statement is true for $c$ restricted to any $k$-space, $2\leqslant k<n$. By Lemma \ref{Exist3Secant}, we can choose a $3$-secant $t$ with corresponding non-zero values $\alpha$, $\beta$ and $\gamma$. By the induction hypothesis and Lemma \ref{SpacesThrough3Secant}, each hyperplane through $t$ is a hyperplane of type $\boldsymbol{\mathcal{T}}$ and by Lemma \ref{OneTypeCat}, we know that there exist two specific types $T_A=\concurrent$ and $T_B\in\{\odd,\nonconcurrent,\concurrent\}$ such that all hyperplanes through $t$ are either of type $T_A$ or type $T_B$.
		Furthermore, by Lemma \ref{NotAllCatBHyper}, we know that there exists at least one hyperplane through $t$ of type $T_A$; consider such a hyperplane $\Pi$. Remark that by Proposition \ref{TypesOfPlanesAndHyperplanes}, all planes through $t$ are planes of type $T_A$ or $T_B$ as well. We can now fix a certain plane $\pi$ as follows: if all planes through $t$ are planes of type $T_A$, choose $\pi$ to be an arbitrary plane through $t$, not contained in $\Pi$. Else, choose $\pi$ to be a plane through $t$ of type $T_B$.
		By Proposition \ref{TypesOfPlanesAndHyperplanes}, $\pi$ cannot be contained in $\Pi$.\\
		Furthermore, we know that $\restr{c}{\Pi}$ is a linear combination of three different $(n-2)$-subspaces of $\Pi$ through an $(n-3)$-space. Choose $\kappa$ to be this $(n-3)$-space. As all lines in $\Pi$, not disjoint to $\kappa$, are either $0$-, $1$-, $q$- or $(q+1)$-secants, we know that $\kappa$ must be disjoint to the $3$-secant $t$ and, furthermore, disjoint from the plane $\pi\supseteq t$, as that plane is not contained in $\Pi$.
		
		\bigskip
		For each point $P\in\kappa$, it is easy to see that $c(P)$ is equal to the sum of the values of the points on the $3$-secant $t$ (which is $\alpha+\beta+\gamma$).\\
		As $\restr{c}{\Pi}$ is a linear combination of three different $(n-2)$-spaces of $\Pi$ having the space $\kappa$ in common, we can choose one of those $(n-2)$-spaces $\Psi_1$; w.l.o.g. this space corresponds to the value $\alpha$. Choose an arbitrary $3$-secant $t_1$ in $\pi$ through the point $\Psi_1\cap\pi$, thus having corresponding non-zero values $\alpha$, $\beta_1$ and $\gamma_1$. By the induction hypothesis and Lemma \ref{SpacesThrough3Secant}, $\Pi_1=\spann{\Psi_1}{t_1}$ is a hyperplane of type $\boldsymbol{\mathcal{T}}$. We claim that $\Pi_1$ is a hyperplane of type $T_A$. Indeed, let $\pi_1$ be a plane in $\Pi_1$ through $t_1$, thus intersecting $\Pi$ in a line of $\Psi_1$. Then this intersection line must be a $q$- or $(q+1)$-secant. By Lemma \ref{PlanesThrough3Secant}, $\pi_1$ has to be a plane of type $\boldsymbol{\mathcal{T}}$ and, more specifically, a plane of type $T_A$ (Lemma \ref{UniqueLongSecants}). As such, all planes in $\Pi_1$ through $t_1$ are planes of type $T_A$, thus $\Pi_1$ contains at least $\theta_{n-3}$ planes of type $T_A$ through a fixed $3$-secant ($t_1$). By Proposition \ref{TypesOfPlanesAndHyperplanes}, at least one of these planes is of the same type as $\Pi_1$, thus this hyperplane must be of type $T_A$.\\
		Let $\kappa_1$ be the $(n-3)$-subspace of $\Pi_1$ in which the three hyperplanes of $\restr{c}{\Pi_1}$ intersect. By Lemma \ref{VerticesAreEqual}, we know that $\kappa=\kappa_1$. In this way, it is easy to see that all points in $\Pi_1\setminus\kappa$ fulfil the desired property.\\
		We can now repeat the above process by choosing another $(n-2)$-space $\Psi_2$ in one of the linear combinations of $\restr{c}{\Pi}$ or $\restr{c}{\Pi_1}$ and considering the span $\Pi_2=\spann{\Psi_2}{t_2}$, with $t_2$ an arbitrary $3$-secant in $\pi$ through the point $\Psi_2\cap\pi$. All points in $\Pi_2\setminus\kappa$ will fulfil the desired property as well.\\
		To conclude, if, for each point $P$ in $\pi$, there exists a sequence of $3$-secants $t_1,t_2,\dots,t_m \ni P$ in $\pi$ such that $t\cap t_1\in\supp{c}$ and $t_i\cap t_{i+1}\in\supp{c}$ for all $i\in\{1,2,\dots,m-1\}$, then this theorem is proven by consecutively repeating the above arguments. Unfortunately, not all points in $\pi$ satisfy this property. However, if a point $P\in\pi$ does not lie on such a (sequence of) $3$-secant(s), we can easily prove it lies on a $0$-, $1$- or $2$-secant $r$ in $\pi$ having $q$ points that do satisfy this first property. 
		Thus, we already know the value of a lot of points in the hyperplane $\spann{\kappa}{r}$, namely of precisely $|\spann{\kappa}{r}|-|\spann{\kappa}{P}|+|\kappa|=\theta_{n-1}-q^{n-2}$ points. Furthermore, $\wt{\restr{c}{\spann{\kappa}{r}}}\leqslant2q^{n-2}+\theta_{n-3}+\wt{\restr{c}{\spann{\kappa}{P}}}-\wt{\restr{c}{\kappa}}\leqslant3q^{n-2}+\theta_{n-3}\leqslant B_{n-1,q}$. Thus, by the induction hypothesis, this hyperplane is a hyperplane of type $\boldsymbol{\mathcal{T}}$. It is easy to see that all points in $\spann{\kappa}{P}$ must satisfy the property of the theorem.
	\end{proof}
	
	\textbf{Acknowledgement.} Special thanks to Maarten De Boeck for revising these results with great care and eye for detail.
	
	\appendix
	
	\section{Further details to Lemma \ref{LinesAndPlanes}}\label{Appendix}
	
	Suppose $c\in C_{n-1}(n,q)$, with $q\geqslant7$, $q\notin\{8,9,16,25,27,49\}$, and assume that $\wt{c}\leqslant D_{n,q}$, with
	\[
	    D_{n,q}=\begin{cases}
		    \Big(3q-\sqrt{6q}-\frac{1}{2}\Big)q^{n-2}\;&\textnormal{if }q\in\{7,11,13,17\}\textnormal{,}\\
		    \Big(3q-\sqrt{6q}+\frac{9}{2}\Big)q^{n-2}\;&\textnormal{if }q\in\{19,121\}\textnormal{,}\\
		    \Big(4q-\sqrt{8q}-\frac{33}{2}\Big)q^{n-2}\;&\textnormal{otherwise;}\\
		\end{cases}
	    A_q=\begin{cases}
	        3q-3\;&\textnormal{if }q\in\{7,11,13,17\}\textnormal{,}\\
		    3q+2\;&\textnormal{if }q\in\{19,121\}\textnormal{,}\\
		    4q-21\;&\textnormal{otherwise.}\\
	    \end{cases}
	\]
	Remark that $B_{n,q}<D_{n,q}$ if $q\in\{29,31,32\}$ and $B_{n,q}=D_{n,q}$ for all other considered values of $q$, so it suffices to check the details of the lemma for this bound $D_{n,q}$. We will prove a contradiction using the following two inequalities:
	\begin{equation}\label{inequalities}
	    \wt{c}\geqslant\bigg(\frac{1}{2}j(j+1)-j\bigg)\theta_{n-2}+j\qquad\textnormal{and}\qquad j\geqslant\frac{A_q\theta_{n-2}-\wt{c}}{\theta_{n-2}-1}\textnormal{.}
	\end{equation}
	Define $\boldsymbol{W}:=\wt{c}$. Below, we will sketch the details when $q>17$, $q\notin\{25,27,49\}$. The other two cases are completely analogous.\\
	Combining the two equations in \eqref{inequalities}, knowing that $A_q=4q-21$, gives rise to the following inequality:
	\begin{align*}
	    0\geqslant(&q^{n+1}-2q^n+q^{n-1}-q^2+2q-1)\boldsymbol{W}^2\\
	    -&(8q^{2n}-49q^{2n-1}+41q^{2n-2}-17q^{n+1}+100q^n-83q^{n-1}+9q^2-51q+42)\boldsymbol{W}\\
	    +&16q^{3n-1}-172q^{3n-2}+462q^{3n-3}-36q^{2n}+441q^{2n-1}-1323q^{2n-2}\\
	    -&8q^{n+2}+82q^{n+1}-458q^n+1302q^{n-1}+8q^3-62q^2+189q-441
	\end{align*}
	The above inequality is of the form $0\geqslant a\boldsymbol{W}^2+b\boldsymbol{W}+c$, with $a\geqslant0$, implying that $\boldsymbol{W}\geqslant\frac{-b-\sqrt{D}}{2a}$ with $D=\sqrt{b^2-4ac}$. One can check that
	\begin{align*}
	    D^2=32q^{4n-1}-&231q^{4n-2}+366q^{4n-3}-167q^{4n-4}\\
	    -&64q^{3n+1}+398q^{3n}-270q^{3n-1}-398q^{3n-2}+334q^{3n-3}\\
	    +&32q^{2n+3}-103q^{2n+2}-526q^{2n+1}+1066q^{2n}-302q^{2n-1}-167q^{2n-2}\\
	    -&64q^{n+4}+398q^{n+3}-270q^{n+2}-398q^{n+1}+334q^n\\
	    +&32q^5-231q^4+366q^3-167q^2
	\end{align*}
	Keeping in mind that $q\geqslant23$, we can raise the right-hand side and obtain
	\begin{equation}\label{inequality1}
	    D^2\leqslant32q^{4n-1}-231q^{4n-2}+398q^{4n-3}-46q^{3n+1}\textnormal{.}
	\end{equation}
	On the other hand, we have that $D^2\geqslant \big(-b-2a(4q-\sqrt{8q}-\frac{33}{2})\big)^2$, which implies
	\begin{align*}
	    D^2\geqslant32q^{4n-1}-&128q^{4n-2}-264\sqrt{2q}\cdot q^{4n-3}+192q^{4n-3}+792\sqrt{2q}\cdot q^{4n-4}+961q^{4n-4}\\
	    -&792\sqrt{2q}\cdot q^{4n-5}-2146q^{4n-5}+264\sqrt{2q}\cdot q^{4n-6}+1089q^{4n-6}\\
	    -&72\sqrt{2q}\cdot q^{3n}-64q^{3n}+552\sqrt{2q}\cdot q^{3n-1}+850q^{3n-1}-696\sqrt{2q}\cdot q^{3n-2}\\
	    -&4344q^{3n-2}-504\sqrt{2q}\cdot q^{3n-3}+4216q^{3n-3}+1248\sqrt{2q}\cdot q^{3n-4}+1520q^{3n-4}\\
	    -&528\sqrt{2q}\cdot q^{3n-5}-2178q^{3n-5}+81q^{2n+2}+144\sqrt{2q}\cdot q^{2n+1}-886q^{2n+1}\\
	    -&1104\sqrt{2q}\cdot q^{2n}+2041q^{2n}+2184\sqrt{2q}\cdot q^{2n-1}+3828q^{2n-1}-1368\sqrt{2q}\cdot q^{2n-2}\\
	    -&9551q^{2n-2}-120\sqrt{2q}\cdot q^{2n-3}+3398q^{2n-3}+264\sqrt{2q}\cdot q^{2n-4}+1089q^{2n-4}\\
	    -&162q^{n+3}-72\sqrt{2q}\cdot q^{n+2}+1836q^{n+2}+552\sqrt{2q}\cdot q^{n+1}-6120q^{n+1}\\
	    -&1224\sqrt{2q}\cdot q^n+4608q^n+1080\sqrt{2q}\cdot q^{n-1}+2610q^{n-1}-336\sqrt{2q}\cdot q^{n-2}\\
	    -&2772q^{n-2}+81q^4-918q^3+3357q^2-4284q+1764\textnormal{.}
	\end{align*}
	Keeping in mind that $q\geqslant23$, we can lower the right-hand side and obtain
	\begin{equation}\label{inequality2}
	    D^2\geqslant32q^{4n-1}-206q^{4n-2}-72\sqrt{2q}\cdot q^{3n}-64q^{3n}\textnormal{.}
	\end{equation}
	Combining \eqref{inequality1} and \eqref{inequality2}, we obtain
	\[
	    32q^{4n-1}-231q^{4n-2}+398q^{4n-3}-46q^{3n+1}\geqslant D^2\geqslant 32q^{4n-1}-206q^{4n-2}-72\sqrt{2q}\cdot q^{3n}-64q^{3n}\textnormal{,}
	\]
	resulting in
	\begin{align*}
	    &0\geqslant25q^{4n-2}-398q^{4n-3}+46q^{3n+1}-72\sqrt{2q}\cdot q^{3n}-64q^{3n}\\
	    \Longrightarrow\quad&0\geqslant25q^{4n-2}-398q^{4n-3}\\
	    \Longrightarrow\quad&\frac{398}{25}\geqslant q\textnormal{,}
	\end{align*}
	a contradiction.

	Authors address:\\
	Sam Adriaensen\\
	Vrije Universiteit Brussel, Department of Mathematics\\
	Pleinlaan $2$\\
	$1050$ Brussels\\
	BELGIUM\\
	\texttt{e-mail: sam.adriaensen@vub.be}
	
	\bigskip
	Lins Denaux, Leo Storme\\
	Ghent University
	Department of Mathematics: Analysis, Logic and Discrete Mathematics\\
	Krijgslaan $281$ -- Building S$8$\\
	$9000$ Ghent\\
	BELGIUM\\
	\texttt{e-mail: lins.denaux@ugent.be}\\
	\texttt{e-mail: leo.storme@ugent.be}
	
	\bigskip
	Zsusza Weiner\\
	MTA-ELTE Geometric and Algebraic Combinatorics Research Group\\
	H-$1117$ Budapest, P\'azm\'any P\'eter s\'et\'any $1$/C\\
	Prezi.com\\
	H-$1065$ Budapest, Nagymez\H o utca $54$--$56$\\
	HUNGARY\\
	\texttt{e-mail: zsuzsa.weiner@gmail.com}

\end{document}